\documentclass[10pt,a4paper]{article}

\usepackage{anysize}
\marginsize{3cm}{3cm}{2cm}{2cm}

\usepackage{amsmath,amssymb,amsthm}
\usepackage{enumitem}

\newcommand{\R}{\mathbb{R}}

\newcommand{\N}{\mathbb{N}}
\newcommand{\Z}{\mathbb{Z}}
\renewcommand{\S}{\mathcal{S}}
\newcommand{\F}{\mathcal{F}}

\renewcommand{\i}{\mathbf{i}}

\theoremstyle{plain}
\newtheorem{thm}{Theorem}[section]
\newtheorem{lma}[thm]{Lemma}
\newtheorem{cor}[thm]{Corollary}
\newtheorem{prop}[thm]{Proposition}

\theoremstyle{definition}
\newtheorem{df}[thm]{Definition}

\theoremstyle{remark}
\newtheorem*{rem}{Remark}
\theoremstyle{remark}
\newtheorem*{ex}{Example}

\newcommand{\supp}{\mathop{\mathrm{supp}}}
\newcommand{\sgn}{\mathop{\mathrm{sgn}}}

\begin{document}

\title{Superposition in Modulation Spaces with Ultradifferentiable Weights
\footnote{\textbf{Mathematics Subject Classification (2010). } 46F05, 47B33, 47H30.}
\footnote{\textbf{Keywords.} Ultradifferentiable weights, modulation spaces, frequency-uniform decomposition, multiplication algebras, superposition operators.}
}

\author{Maximilian Reich \\
\small Fakult\"at f\"ur Mathematik und Informatik \\[-0.8ex]
\small TU Bergakademie Freiberg, Germany \\[-0.8ex]
\small \texttt{maximilian.reich@math.tu-freiberg.de}\\
}



\date{\today}
		
\maketitle
		
\begin{abstract}
	In the theory of nonlinear partial differential equations we need to explain superposition operators. For modulation spaces equipped with particular ultradifferentiable weights this was done in \cite{rrs}. In this paper we introduce a class of general ultradifferentiable weights for modulation spaces $\mathcal{M}^{w_*}_{p,q}(\R^n)$ which have at most subexponential growth. We establish analytic as well as non-analytic superposition results in the spaces $\mathcal{M}^{w_*}_{p,q}(\R^n)$. 
\end{abstract}

	
	\section{Introduction}

	
	Classical modulation spaces got originally introduced by Feichtinger in \cite{feichtingerGroup} as a family of Banach spaces controlling globally local frequency information of a function. Thus, modulation spaces are an important tool when discussing problems in time-frequency analysis. But it also turned out that modulation spaces find fruitful applications in the theory of partial differential equations (e.g. see \cite{cordero, iwabuchi, ruzhansky, qiang, wang1, wangNonlinear, wang, wangExp}) and in the theory of pseudo-differential calculus and Fourier integral operators (e.g. see \cite{toft, toftCubo, heil, zhong}). \\
	In the study of partial differential equations Gevrey analysis is an effective tool, where models are treated in the phase space instead of the physical space. Therefore functions are characterized by its behavior on the Fourier transform side. Here we define Gevrey functions as follows:
	\[ f\in \mathcal{G}_s (\R^n) \Longleftrightarrow e^{\langle \xi \rangle^{\frac{1}{s}}} \F f(\xi) \in L^2(\R^n), \]
	where $s\geq 1$. Let us consider the strictly hyperbolic Cauchy problem
	\begin{equation} \label{Gevrey}
		u_{tt} - a(t)u_{xx} = 0, \quad u(0,x) = \phi(x), \quad u_t(0,x) = \psi(x)
	\end{equation}
	with initial data $(\phi, \psi) \in \mathcal{G}_s (\R^n) \times \mathcal{G}_s (\R^n)$ and positive H\"older coefficient $a=a(t) \in C^\alpha [0,T]$, $0<\alpha<1$. In \cite{colombini} the authors showed that \eqref{Gevrey} is globally (in time) well-posed in the Gevrey space $\mathcal{G}_s (\R^n)$ for $s < \frac{1}{1-\alpha}$. In \cite{brs} the authors proved a well-posedness result of the corresponding semi-linear Cauchy problem 
	\begin{equation} \label{GevreyNL}
		u_{tt} - a(t)u_{xx} = f(u), \quad u(0,x) = \phi(x), \quad u_t(0,x) = \psi(x)
	\end{equation}
	with an admissible nonlinearity $f=f(u)$ and Gevrey data $\phi$ and $\psi$. Hence, the obstacle is to explain superposition operators $T_f$ defined by
	\begin{equation} \label{SupOperator}
		T_f: u \mapsto T_f u := f(u)
	\end{equation}
	in corresponding function spaces. This was the main motivation in \cite{rrs} in order to achieve superposition results by employing similar strategies as in \cite{brs}. As described before functions from scales of modulation spaces are also defined by means of their behavior on the Fourier transform side. \\
	In the present paper the goal is to treat analytic as well as non-analytic superposition in modulation spaces equipped with a general class of weights growing faster than any polynomial but at most subexponentially, i.e., a natural extension of results obtained in \cite{rrs}. \\
	The paper is organized as follows. First of all we define modulation spaces $M^s_{p,q} (\R^n)$ by means of a uniform decomposition method as used in, e.g., \cite{feichtingerGroup}, \cite{trzaa} and \cite{wangExp} (see Section \ref{ClassicalMod}). \\
	In Section \ref{WeightClass} we give a formal definition of modulation spaces $M^{w_*}_{p,q} (\R^n)$ equipped with weights $e^{w_*(|\cdot|)}$. Then an appropriate class $\mathcal{W} (\R)$ of general weights $w_*$ is proposed and some basic properties of modulation spaces $M^{w_*}_{p,q} (\R^n)$ are discussed. \\
	By proving algebra properties in Section \ref{MultAlg} analytic nonlinearities can be handled. \\
	In Section \ref{NonAnalSup} we are able to find non-analytic functions $f \in C^\infty (\R^n)$ such that the corresponding superposition operator $T_f$ explained by \eqref{SupOperator} maps modulation spaces $M^{w_*}_{p,q} (\R^n)$ into itself. \\
	Some open problems and concluding remarks complete the paper (see Section \ref{ConcRem}).
	
	\section{Modulation Spaces} \label{ClassicalMod}
	
	First of all we introduce some basic notation and definitions. In $\R^n$ the notation of 
multi-indices $\alpha = (\alpha_1, \ldots , \alpha_n)$ is used, where $|\alpha|=\sum_{j=1}^n \alpha_j$.
 Given two multi-indices $\alpha$ and $\beta$, then $\alpha \leq \beta$ means $\alpha_j \leq \beta_j$ for $1\leq j\leq n$. 
Furthermore, let $f$ be a function on $\R^n$ and $x\in\R^n$, then
	\[ x^\alpha = \prod_{j=1}^n x_j^{\alpha_j} \]
	and
\[ D^\alpha f(x) = \frac{1}{\i^{|\alpha|}} \frac{\partial^\alpha}{\partial x^\alpha} f(x) = 
\frac{1}{\i^{|\alpha|}} \Big(\prod_{j=1}^n \frac{\partial^{\alpha_j}}{\partial x_j^{\alpha_j}} \Big)f(x). 
\]
	A function $f\in C^\infty (\R^n)$ belongs to the Schwartz space $\S(\R^n)$ if and only if
		\[ \sup_{x\in \R^n} | x^\alpha D^\beta f(x)| < \infty \]
		for all multi-indices $\alpha, \beta$. The set of all tempered distributions is denoted by $\S'(\R^n)$ which is the dual space of 
		$\S(\R^n)$. Moreover, by $C^\infty_0 (\R^n)$ we denote the space of smooth functions with compact support. \\
We introduce $\langle \xi\rangle_m := (m^2+|\xi|^2)^{\frac{1}{2}}$.
		If $m=1$, then we write $\langle \xi \rangle$ for simplicity. \\
		The notation $a \lesssim b$ is equivalent to $a \leq Cb$ with a positive constant $C$. Moreover, by writing $a \asymp b$ we mean $a \lesssim b \lesssim a$. Let $X$ and $Y$ be two Banach spaces. Then the symbol $X \hookrightarrow Y$ indicates that the embedding is continuous. The Fourier transform of 
an admissible function $f$ is defined by
		\[ \F f(\xi) = \hat{f} (\xi) = (2\pi)^{-\frac{n}{2}} \int_{\R^n} f(x) e^{-\i x\cdot \xi} \, dx \quad (x,\xi \in\R^n). \]
		Analogously, the inverse Fourier transform is defined by
		\[ \F^{-1} \hat{f}(x) = (2\pi)^{-\frac{n}{2}} \int_{\R^n} \hat{f}(\xi) e^{\i x\cdot \xi} \, d\xi \quad (x,\xi \in\R^n). \]		
		
		First in \cite{feichtingerGroup} Feichtinger defined modulation spaces by taking Lebesgue norms of the so-called short-time Fourier transform of a function $f$ with respect to $x$ and $\xi$. The short-time Fourier transform is a particular joint time-frequency representation. For the definition and mapping properties we refer to \cite{groechenig}. By introducing the following decomposition principle we adopt the idea of obtaining local frequency properties of a function $f$. Related frequency decomposition techniques are explained in \cite{groechenig}. A special case, the so-called frequency-uniform decomposition, was independently introduced by Wang (e.g., see \cite{wang}). Let $\rho: \R^n \mapsto [0,1]$ be a Schwartz function which 
is compactly supported in the cube 
\[
Q_0 := \{ \xi \in \R^n: -1\leq \xi_i \leq 1,\:  i=1,\ldots,n \}\, .
\] 
Moreover, 
\[
\rho(\xi)=1 \qquad \mbox{if}  \quad  |\xi_i|\leq \frac{1}{2}\, , \qquad i=1, 2, \ldots \, n. 
\]
With $\rho_k (\xi) :=\rho (\xi-k) $, $\xi \in \R^n$, $k \in \Z^n$, it follows
\[
\sum_{k \in \Z^n} \rho_k (\xi)\ge 1 \qquad \mbox{for all}\quad \xi \in \R^n\, .
\]
Finally, we define
\[ 
\sigma_k(\xi) := \rho_k(\xi) \Big(\sum_{k\in\Z^n} \rho_k(\xi)\Big)^{-1}, \qquad \xi \in \R^n\, , \quad k\in\Z^n \, .
\]
The following  properties are obvious: 
		\begin{itemize}
			\item $ 0 \le \sigma_k(\xi) \le 1$ for all $\xi \in \R^n$;
			\item $\supp \sigma_k \subset Q_k := \{ \xi \in \R^n: -1\leq \xi_i -k_i \leq 1, \: i=1,\ldots,n \} $;
			\item $\displaystyle \sum_{k\in\Z^n} \sigma_k(\xi) \equiv 1$ for all $\xi\in\R^n$;
			\item there exists a constant $C>0$ such that $\sigma_k (\xi) \ge C$ if $ \max_{i=1, \ldots \, n}\, |\xi_i-k_i|\leq \frac{1}{2}$; 
			\item for all $m \in \N_0$ there exist positive constants $C_m$ such that for $|\alpha|\leq m$
\[
\sup_{k \in \Z^n}\, \sup_{\xi \in \R^n} \, |D^\alpha \sigma_k(\xi)|\leq C_m\,   .
\]
		\end{itemize}
		The operator
		\[ \Box_k := \F^{-1} \left( \sigma_k \F (\cdot) \right), \quad k\in\Z^n, \]
		is called uniform decomposition operator. \\
		In \cite{feichtingerGroup} Feichtinger also showed that there is an equivalent definition of modulation spaces by means of the uniform decomposition operator. 
		
		\begin{df} \label{defdecomp}
			Let $1\leq p,q \leq \infty$ and assume $s \in\R$ to be the weight parameter. Suppose the window $\rho\in \S(\R^n)$ is compactly supported. Then the weighted modulation space $M_{p,q}^{s}(\R^n)$ consists of all tempered distributions $f\in \S'(\R^n)$ such that their norm 
			\[ \|f\|_{M_{p,q}^{s}} = \Big( \sum_{k\in\Z^n} \langle k \rangle^{sq} \|\Box_k f\|_{L^p}^q \Big)^{\frac{1}{q}} \]
			is finite with obvious modifications when $p=\infty$ and/or $q=\infty$.
		\end{df}
		
		\begin{rem}
			General references with respect to (weighted) modulation spaces are \cite{feichtingerGroup}, \cite{groechenig}, \cite{toftConv}, \cite{trzaa} and \cite{wangExp} to mention only a few.
		\end{rem}

	\section{Modulation Spaces with General Weights} \label{WeightClass}
	The goal of this section is to define a set $\mathcal{W} (\R)$ of admissible weight functions for modulation spaces in order to obtain analytic as well as non-analytic superposition results in Sections \ref{MultAlg} and \ref{NonAnalSup}, respectively. 
	
	\subsection{Definition and Basic Properties}
	
	We give a formal definition of the weighted modulation space $\mathcal{M}^{w_*}_{p,q}(\R^n)$.
	
	\begin{df} \label{ModSpace}
		Let $1\leq p,q \leq \infty$ and let $w_*$ be a monotonically increasing weight function with $w_* (0) \geq 0$. Suppose the window $\rho\in C^\infty_0 (\R^n)$ to be a fixed window function. By $(\sigma_k)_k$ we denote the associated uniform decomposition of unity. Then the weighted modulation space $\mathcal{M}^{w_*}_{p,q}(\R^n)$ is the collection of all $f\in L^p(\R^n)$ such that
		\[ \| f\|_{\mathcal{M}^{w_*}_{p,q}} := \bigg( \sum_{k\in\Z^n} e^{q w_*(|k|)} \| \Box_k f \|^q_{L^p} \bigg)^{\frac{1}{q}} < \infty \]
		(with obvious modifications when $p=\infty$ and/or $p=\infty$). 
	\end{df}
	
	\begin{rem}
		If we put $w_* (|k|) := s \log \langle k \rangle$, then Definition \ref{ModSpace} coincides with Definition \ref{defdecomp} in the sense of equivalent norms. The cases $w_*(|k|) = \langle k \rangle^{\frac{1}{s}}$, $s>1$, and $w_*(|k|) = \log \langle k\rangle_{e^e} \log \log \langle k \rangle_{e^e}$ were considered in \cite{rrs}, respectively. 
	\end{rem}
	
	Let us mention some basic properties of modulation spaces $\mathcal{M}^{w_*}_{p,q} (\R^n)$, for the proofs see \cite{rrs}.

\begin{lma}\label{basic}
(i) The modulation space $\mathcal{M}^{w_*}_{p,q}(\R^n)$ is a Banach space.
\\
(ii) $\mathcal{M}^{w_*}_{p,q}(\R^n)$ is independent of the choice of the window $\rho\in C_0^\infty (\R^n)$ in the sense of equivalent norms.
\\
(iii) $\mathcal{M}^{w_*}_{p,q}(\R^n)$ has the Fatou property, i.e., if $(f_m)_{m=1}^\infty \subset \mathcal{M}^{w_*}_{p,q}(\R^n)$ 
is a sequence such that 
$ f_m \rightharpoonup f $ (weak convergence in $\S'(\R^n)$) and 
\[
 \sup_{m \in \N}\,  \|\, f_m \, \|_{\mathcal{M}^{w_*}_{p,q}} < \infty\, , 
\]
then $f \in \mathcal{M}^{w_*}_{p,q}$ follows and 
\[
\|\, f \, \|_{\mathcal{M}^{w_*}_{p,q}} \le \sup_{m\in \N}\,  \|\, f_m \, \|_{\mathcal{M}^{w_*}_{p,q}} < \infty\, .
\]
\end{lma}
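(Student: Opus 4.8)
The plan is to reduce all three assertions to two ingredients: the behaviour of the band-limited building blocks $\Box_k f$ in $L^p$, and the structure of the weighted sequence space in which $(\|\Box_k f\|_{L^p})_{k}$ lives. Throughout I would use that $\Box_k=\F^{-1}(\sigma_k \F\,\cdot\,)$ is convolution with the Schwartz kernel $\F^{-1}\sigma_k$, hence continuous on $\S'(\R^n)$ and bounded on every $L^p(\R^n)$; that $\sum_k\sigma_k\equiv1$ yields the reconstruction $f=\sum_k\Box_k f$ in $\S'(\R^n)$ for every $f\in\S'(\R^n)$; and that the partition has finite overlap, $\supp\sigma_k\subset Q_k$ with $Q_k\cap Q_l=\emptyset$ once $\max_i|k_i-l_i|>2$. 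A recurring observation is that, since $e^{w_*(|k|)}$ grows faster than any power of $\langle k\rangle$, Hölder's inequality gives $\sum_k\|\Box_k f\|_{L^p}\lesssim\|f\|_{\mathcal{M}^{w_*}_{p,q}}$ (the dual weight $e^{-q'w_*}$ being summable against any polynomial), with the obvious direct estimate when $q=\infty$. Hence the reconstruction series converges absolutely in $L^p$, so $\mathcal{M}^{w_*}_{p,q}(\R^n)\hookrightarrow L^p(\R^n)$ and each $\Box_k f$ is a genuine $L^p$ piece; this removes any worry about whether limits actually lie in $L^p$.

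For completeness (i), I would verify that every absolutely convergent series converges. Given $(h_m)$ with $\sum_m\|h_m\|_{\mathcal{M}^{w_*}_{p,q}}<\infty$, the bound $\|\Box_k h_m\|_{L^p}\le e^{-w_*(|k|)}\|h_m\|_{\mathcal{M}^{w_*}_{p,q}}$ shows that for each fixed $k$ the series $\sum_m\Box_k h_m$ converges in $L^p$ to some $g_k$ with $\supp\widehat{g_k}\subset Q_k$. Setting $f:=\sum_k g_k$, which converges in $L^p$ by the summability observation above, one checks $\Box_k f=g_k$ using $\sum_l\sigma_l\equiv1$ and finite overlap, and then Minkowski's inequality in $\ell^q$ combined with Fatou for series gives $\|f-\sum_{m\le M}h_m\|_{\mathcal{M}^{w_*}_{p,q}}\to0$.

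For window independence (ii), let $\tilde\rho$ be a second admissible window with associated $\tilde\sigma_k,\tilde\Box_k$. Writing $\tilde\sigma_k=\sum_{l\in\Lambda_k}\tilde\sigma_k\sigma_l$ with $\Lambda_k=\{l:Q_l\cap\supp\tilde\sigma_k\ne\emptyset\}$ of cardinality bounded uniformly in $k$, I get $\tilde\Box_k f=\sum_{l\in\Lambda_k}(\F^{-1}\tilde\sigma_k)\ast\Box_l f$ up to a fixed constant, so Young's inequality yields $\|\tilde\Box_k f\|_{L^p}\le\bigl(\sup_k\|\F^{-1}\tilde\sigma_k\|_{L^1}\bigr)\sum_{l\in\Lambda_k}\|\Box_l f\|_{L^p}$. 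The uniform derivative bounds on $\tilde\sigma_k$ (the last bullet in Section \ref{ClassicalMod}) give $\sup_k\|\F^{-1}\tilde\sigma_k\|_{L^1}<\infty$. Multiplying by $e^{w_*(|k|)}$ and taking the $\ell^q$ norm, the only nontrivial point is to replace $e^{w_*(|k|)}$ by $e^{w_*(|l|)}$ for $l\in\Lambda_k$; since $\max_i|k_i-l_i|\le2$ there, this is exactly the moderateness estimate $\sup_{\max_i|k_i-l_i|\le2}\bigl(w_*(|k|)-w_*(|l|)\bigr)<\infty$, which is where the sublinear (subexponential) growth of the admissible weights enters. Interchanging the roles of $\rho$ and $\tilde\rho$ then gives the norm equivalence.

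For the Fatou property (iii), suppose $f_m\rightharpoonup f$ in $\S'(\R^n)$ with $\sup_m\|f_m\|_{\mathcal{M}^{w_*}_{p,q}}=:A<\infty$. Continuity of $\Box_k$ on $\S'(\R^n)$ gives $\Box_k f_m\rightharpoonup\Box_k f$ for each $k$, while $\sup_m\|\Box_k f_m\|_{L^p}\le e^{-w_*(|k|)}A$. The crux is the lower semicontinuity $\|\Box_k f\|_{L^p}\le\liminf_m\|\Box_k f_m\|_{L^p}$: for $1<p\le\infty$ it follows from weak(-$\ast$) compactness of $L^p$-balls after identifying the weak limit with $\Box_k f$, and at the endpoint $p=1$ I would first pass through $L^2$ via Nikolskii's inequality on the fixed band $Q_k$ (all $L^r$ norms of functions with Fourier support in $Q_k$ being comparable), so that Hilbert-space weak compactness applies. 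With this in hand, Fatou's lemma for the series over $k$ gives $\sum_k e^{qw_*(|k|)}\|\Box_k f\|_{L^p}^q\le\liminf_m\|f_m\|_{\mathcal{M}^{w_*}_{p,q}}^q\le A^q$, and the summability observation of the first paragraph shows $f=\sum_k\Box_k f\in L^p(\R^n)$, whence $f\in\mathcal{M}^{w_*}_{p,q}(\R^n)$ with the asserted bound (with the usual modifications for $q=\infty$). I expect this endpoint lower-semicontinuity step, namely matching the $\S'$-weak limit with the $L^p$-weak-$\ast$ limit of the band-limited pieces uniformly over $p\in[1,\infty]$, to be the main obstacle; everything else is bookkeeping with Young's inequality, finite overlap, and the weight's moderateness.
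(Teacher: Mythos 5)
The paper itself gives no proof of this lemma (it defers to \cite{rrs}), so your proposal is judged on its own terms. Parts (i) and (ii) are correct and follow the expected route: completeness via absolutely convergent series, using $\sum_{k}e^{-w_*(|k|)}<\infty$ from \ref{A4}; window independence via finite overlap of the $Q_k$, Young's inequality with $\sup_k\|\F^{-1}\tilde\sigma_k\|_{L^1}<\infty$, and moderateness of the weight. For the latter you should say explicitly where moderateness comes from: it is not directly the subexponential growth but the boundedness of $w_*'$ (from \ref{A1}, \ref{A2}, \ref{A3}, \ref{A6}) together with the mean value theorem, $|w_*(|k|)-w_*(|l|)|\le \|w_*'\|_{L^\infty}\,|k-l|$ --- exactly the estimate the paper uses inside the proof of Theorem \ref{algebraUltraGen}. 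Part (iii) is also fine for $1<p\le\infty$ (weak, respectively weak-$*$, compactness of balls plus identification of the limit with $\Box_k f$ through $\S'$).

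There is, however, a genuine error in your endpoint case $p=1$: the claim that ``all $L^r$ norms of functions with Fourier support in $Q_k$ are comparable'' is false. Nikol'skij's inequality is one-directional, $\|g\|_{L^r}\lesssim\|g\|_{L^p}$ only for $p\le r$; the reverse fails outright, since e.g. $g(x)=\prod_{j=1}^n \frac{\sin x_j}{x_j}$ has spectrum in $Q_0$ and lies in $L^2(\R^n)\setminus L^1(\R^n)$. Consequently your detour through $L^2$ yields only $\|\Box_k f\|_{L^2}\le\liminf_m\|\Box_k f_m\|_{L^2}$, which cannot be converted back into the needed bound $\|\Box_k f\|_{L^1}\le\liminf_m\|\Box_k f_m\|_{L^1}$, and the Fatou summation over $k$ collapses at $p=1$. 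The repair is simpler than your compactness argument and works uniformly in $p$: since $\sigma_k\widehat{f}$ is a compactly supported distribution, $\Box_k f$ is a smooth function of polynomial growth by Paley--Wiener--Schwartz, hence locally integrable, and for every $1\le p\le\infty$ one has the duality formula $\|\Box_k f\|_{L^p}=\sup\bigl\{|\langle \Box_k f,\varphi\rangle| : \varphi\in C_0^\infty(\R^n),\ \|\varphi\|_{L^{p'}}\le 1\bigr\}$ (with monotone convergence handling the exhaustion by compact sets when $p=1$). Each pairing satisfies $\langle \Box_k f,\varphi\rangle=\lim_m\langle \Box_k f_m,\varphi\rangle$ by weak convergence in $\S'(\R^n)$, and $|\langle \Box_k f_m,\varphi\rangle|\le\|\Box_k f_m\|_{L^p}\|\varphi\|_{L^{p'}}$, so lower semicontinuity follows for all $p$ simultaneously, with the sharp constant $1$ and with no compactness extraction at all. (Alternatively, at $p=1$ one may extract a weak-$*$ limit in the space of finite measures and observe that the limit measure is absolutely continuous because $\Box_k f$ is smooth; but the duality argument supersedes this.) The rest of your (iii) --- Fatou's lemma over $k$ and the absolute convergence of $\sum_k \Box_k f$ in $L^p$ to conclude $f\in L^p$ as required by Definition \ref{ModSpace} --- is correct as written.
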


Moreover, the spaces $\mathcal{M}^{w_*}_{p,q}(\R^n)$ are monotone in $p$ and $q$ for a fixed weight $w_*$, where we stress Nikol'skij's inequality, see, e.g.,  Nikol'skij \cite[3.4]{Ni} or Triebel \cite[1.3.2]{triebel}.

\begin{lma}\label{einbettung}
	Let $1 \leq p_0 < p \leq \infty$ and $1 \leq q_0 < q \leq \infty$. Then for fixed $w_*\in \mathcal{W} (\R)$ the following  embeddings hold and are continuous:
	\[ \mathcal{M}^{w_*}_{p_0,q}(\R^n) \hookrightarrow \mathcal{M}^{w_*}_{p,q}(\R^n) \]
	and
	\[ \mathcal{M}^{w_*}_{p,q_0}(\R^n) \hookrightarrow \mathcal{M}^{w_*}_{p,q}(\R^n)\, ; \]
	i.e., for all $1\le p,q\le \infty$ we have 
	\[ \mathcal{M}^{w_*}_{1,1}(\R^n) \hookrightarrow \mathcal{M}^{w_*}_{p,q}(\R^n) \hookrightarrow  \mathcal{M}^{w_*}_{\infty,\infty}(\R^n)\, . \]
\end{lma}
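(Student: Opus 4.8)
The plan is to prove the two embedding statements of Lemma~\ref{einbettung} separately, since the $q$-monotonicity is purely a statement about weighted sequence spaces while the $p$-monotonicity requires genuine input from the frequency-uniform decomposition, namely Nikol'skij's inequality. I would then obtain the chain $\mathcal{M}^{w_*}_{1,1} \hookrightarrow \mathcal{M}^{w_*}_{p,q} \hookrightarrow \mathcal{M}^{w_*}_{\infty,\infty}$ by combining both.

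For the monotonicity in $q$, fix $p$ and let $q_0 < q$. Writing $a_k := e^{w_*(|k|)}\|\Box_k f\|_{L^p}$, the two modulation norms are exactly the $\ell^{q_0}$- and $\ell^q$-norms of the sequence $(a_k)_{k\in\Z^n}$. The embedding $\mathcal{M}^{w_*}_{p,q_0}\hookrightarrow\mathcal{M}^{w_*}_{p,q}$ is therefore just the elementary nesting $\|(a_k)\|_{\ell^{q_0}}\ge\|(a_k)\|_{\ell^q}$ for $q_0\le q$, valid with embedding constant $1$, with the usual modification for $q=\infty$ where $\sup_k a_k \le (\sum_k a_k^{q_0})^{1/q_0}$. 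No property of $w_*$ beyond those already assumed in Definition~\ref{ModSpace} is needed here.

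For the monotonicity in $p$, fix $q$ and let $p_0 < p$. The key point is that each $\Box_k f = \F^{-1}(\sigma_k\,\F f)$ has Fourier transform supported in the fixed-size cube $Q_k$, so by Nikol'skij's inequality (the cited \cite[3.4]{Ni} or \cite[1.3.2]{triebel}) there is a constant $C$, \emph{independent of} $k$ because all the cubes $Q_k$ are translates of $Q_0$ and hence have the same side length, such that
\[
\|\Box_k f\|_{L^p} \le C\,\|\Box_k f\|_{L^{p_0}}, \qquad k\in\Z^n.
\]
Multiplying by $e^{w_*(|k|)}$, taking the $\ell^q(\Z^n)$-norm over $k$, and pulling out the uniform constant $C$ yields $\|f\|_{\mathcal{M}^{w_*}_{p,q}}\le C\,\|f\|_{\mathcal{M}^{w_*}_{p_0,q}}$, which is the first asserted embedding. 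The final chain then follows by applying the $q$-embedding with $q_0=1, q=\infty$ and the $p$-embedding with $p_0=1, p=\infty$ and composing the two continuous inclusions.

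The only real obstacle is verifying that the Nikol'skij constant can be taken uniform in $k$; once the translation invariance of the cube-supported band-limiting is invoked this is immediate, and I would state it as the crux of the argument. Everything else is the bookkeeping of composing embeddings of weighted $\ell^q$-spaces, which I would keep brief and leave the routine $\infty$-endpoint modifications to the reader.
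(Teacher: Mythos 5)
Your proof is correct and follows exactly the route the paper intends: the paper gives no written-out proof of Lemma~\ref{einbettung}, merely citing Nikol'skij's inequality (Nikol'skij \cite[3.4]{Ni}, Triebel \cite[1.3.2]{triebel}) for the $p$-monotonicity, with the $q$-monotonicity being the elementary nesting of weighted $\ell^q$-spaces. Your identification of the uniformity of the Nikol'skij constant in $k$ (via modulation/translation invariance of the cubes $Q_k$) as the crux correctly supplies the one detail the citation leaves implicit.
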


	\subsection{A Class of General Weight Functions}

	Subsequently we introduce classes of functions which will turn out to be appropriate in order to prove our desired results in the Sections \ref{MultAlg} and \ref{NonAnalSup}, respectively.
	
	\begin{df} \label{SlowlyVF}
		A measurable function $f: (0,\infty) \mapsto (0,\infty)$ is called slowly varying if 
		\[ \lim_{t\to \infty} \frac{f(tx)}{f(t)} = 1 \]
		for all $x>0$. 
	\end{df}
	For Definition \ref{SlowlyVF} we refer to the definition in Chapter 1.2 in \cite{bgt}. By the Uniform Convergence Theorem, see \cite[Theorem 1.2.1]{bgt}, it follows that the convergence in Definition \ref{SlowlyVF} is uniform with respect to $x$ in any fixed interval $[a,b]$, $0<a<b<\infty$. Moreover, we recall the so-called Karamata representation theorem, see \cite[1.3.1]{bgt}. 
	\begin{thm} \label{karamata}
		A function $f$ is slowly varying if and only if there exists a positive number $B$ such that 
		\begin{equation} \label{ReprSVF}
			f(x) = \exp \Big\{ \eta(x) + \int_B^x \frac{\varepsilon(t)}{t} \, dt \Big\} \, , \qquad x\geq B,
		\end{equation}
		where $\eta(x) \to c$ $(|c|<\infty)$ as $x\to \infty$ and $\varepsilon(t) \to 0$ as $t\to \infty$. Both functions $\eta$ and $\varepsilon$ are measurable and bounded. 
	\end{thm}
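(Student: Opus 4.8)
The plan is to treat the two implications separately, reducing both to a statement about a function on the half-line in logarithmic variables. Throughout I set $g(v) := \log f(e^v)$, so that the slow variation of $f$ becomes the assertion that $g(v+s) - g(v) \to 0$ as $v \to \infty$ for every fixed $s \in \R$; by the Uniform Convergence Theorem cited above this convergence is uniform for $s$ in any compact interval. Under the substitution $t = e^u$ one has $dt/t = du$, so the target representation \eqref{ReprSVF} is equivalent to writing
\[ g(v) = \tilde\eta(v) + \int_{v_0}^v \tilde\varepsilon(u)\,du \]
with $\tilde\eta(v) = \eta(e^v) \to c$ and $\tilde\varepsilon(u) = \varepsilon(e^u) \to 0$; once this is established for $g$, reversing the substitution yields the claim for $f$.

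The sufficiency (``if'') direction is the routine one. Assuming the representation, I would compute, for fixed $x>0$ and $t \geq B$,
\[ \frac{f(tx)}{f(t)} = \exp\Big\{ \eta(tx) - \eta(t) + \int_t^{tx} \frac{\varepsilon(u)}{u}\,du \Big\}, \]
and substitute $u = ts$ in the integral to obtain $\int_1^x \frac{\varepsilon(ts)}{s}\,ds$. As $t \to \infty$ the difference $\eta(tx) - \eta(t) \to c - c = 0$, while $|\int_1^x \frac{\varepsilon(ts)}{s}\,ds| \leq (\sup_{u \geq t} |\varepsilon(u)|)\,|\log x| \to 0$ since $\varepsilon(u) \to 0$. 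Hence the exponent tends to $0$ and $f(tx)/f(t) \to 1$, so $f$ is slowly varying.

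For the necessity (``only if'') direction I would construct $\tilde\eta$ and $\tilde\varepsilon$ explicitly by an averaging device. Define
\[ G(v) := \int_0^1 g(v+s)\,ds = \int_v^{v+1} g(u)\,du, \]
set $\tilde\varepsilon(v) := g(v+1) - g(v)$ and $\tilde\eta(v) := g(v) - \big(G(v) - G(v_0)\big)$ for a fixed large $v_0$. Since $G$ is absolutely continuous with $G'(v) = g(v+1) - g(v) = \tilde\varepsilon(v)$ a.e., we have $G(v) - G(v_0) = \int_{v_0}^v \tilde\varepsilon(u)\,du$, and therefore $g(v) = \tilde\eta(v) + \int_{v_0}^v \tilde\varepsilon(u)\,du$ by construction. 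It remains to check the limits: $\tilde\varepsilon(v) = g(v+1) - g(v) \to 0$ is exactly slow variation with $s=1$, while
\[ \tilde\eta(v) - G(v_0) = g(v) - G(v) = \int_0^1 \big(g(v) - g(v+s)\big)\,ds \to 0 \]
because $g(v+s) - g(v) \to 0$ uniformly for $s \in [0,1]$; thus $\tilde\eta(v) \to c := G(v_0)$. Boundedness and measurability of both functions for large argument are then immediate.

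The step I expect to be delicate is the passage to the averaged function $G$, which presupposes that $g$ --- equivalently $f$ --- is locally bounded, indeed locally integrable, on a neighbourhood of infinity; without this $G$ need not be finite and the differentiation $G' = \tilde\varepsilon$ a.e. cannot be justified. I would obtain this local boundedness from the Uniform Convergence Theorem: for $t$ beyond some threshold $f(tx)/f(t)$ stays within $[1/2,2]$ uniformly for $x \in [1,e]$, which pins $f$ between fixed multiples of $f(t)$ on each block $[t,et]$ and hence makes $f$ bounded on compact subsets of $[X,\infty)$. Granting this, $g$ is measurable and locally bounded, Lebesgue differentiation gives $G'(v) = g(v+1) - g(v)$ almost everywhere, and the remainder of the argument proceeds as above.
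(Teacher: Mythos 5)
Your proof is correct, and it coincides with the paper's treatment in the only sense available: the paper offers no proof of Theorem \ref{karamata}, citing \cite[Theorem 1.3.1]{bgt}, and your argument --- direct verification of the ``if'' part, and for the ``only if'' part the logarithmic substitution $g(v)=\log f(e^v)$ with the averaging device $G(v)=\int_v^{v+1}g(u)\,du$, local boundedness near infinity being secured via the Uniform Convergence Theorem --- is precisely the classical proof given there. One cosmetic simplification: the identity $G(v)-G(v_0)=\int_{v_0}^{v}\bigl(g(u+1)-g(u)\bigr)\,du$ follows from a direct translation-of-the-integral computation once $g$ is locally integrable, so the appeal to absolute continuity and a.e.\ differentiation of $G$ can be dispensed with entirely.
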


	\begin{rem}
		For a more detailed insight see \cite{bgt} and \cite{gs}. Here we remark one important fact. Since the functions $f$, $\eta$ and $\varepsilon$ are characterized by their behavior at infinity we can alter them on finite intervals as long as they remain measurable und bounded, respectively. Thus, without loss of generality we can choose the number $B$ in Theorem \ref{karamata} to be any desired positive number by adjusting $\varepsilon$. \\ 
		Generally spoken slowly varying functions resemble functions converging at infinity. For several properties of slowly varying functions we refer to Proposition 1.3.6 in \cite{bgt}. For instance,  any real power of a slowly varying function is again slowly varying.
	\end{rem}
	
	Now let us consider functions which increase faster than slowly varying functions. In fact, the so-called regularly varying functions resemble a monomial near infinity.
	\begin{df} \label{RegularlyVF}
		A measurable function $f: (0,\infty) \mapsto (0,\infty)$ is called regularly varying if 
		\[ \lim_{t\to \infty} \frac{f(tx)}{f(t)} = x^\alpha \]
		for all $x>0$ and $\alpha\in\R$. The number $\alpha$ is called index.
	\end{df}
	
	\begin{rem}
		For the definition see Chapter 1.4.2 in \cite{bgt}. \\
		Again a Uniform Convergence Theorem, see \cite[Theorem 1.5.2]{bgt}, states the uniform convergence in Definition \ref{RegularlyVF} with respect to $x$ on each interval $(0,b]$, $0<b<\infty$, if index $\alpha>0$. This is sufficient for our purposes since we will only work with regularly varying functions with nonnegative index. \\
		For some properties of regularly varying functions we refer to Proposition 1.5.7 in \cite{bgt}. 
	\end{rem}

	By stressing Theorem 1.4.1 in \cite{bgt}, the so-called Characterisation Theorem, we are able to find a connection between regularly varying and slowly varying functions. If $f$ is a regularly varying function of index $\alpha$, then $f(x) = x^\alpha g(x)$ with a slowly varying function $g$. In terms of Theorem \ref{karamata} we find the representation
	\[ f(x) = x^\alpha \exp \Big\{ \eta(x) + \int_B^x \frac{\varepsilon(t)}{t} \, dt \Big\} \, , \qquad x\geq B. \]
	Now we can also assume some regularity for the function $f$. In fact, Theorem 1.3.3 in \cite{bgt} yields the asymptotical equivalence of an arbitrary slowly varying function $g$ and a smooth slowly varying function $g_1$, i.e., $\lim_{t\to \infty} g(t)/g_1(t) = 1$, where we also replaced the function $\eta=\eta(x)$ by its limit $c$ in the representation of $g_1$. 
Therefore let $f \in C^\infty (0, \infty)$ be a regularly varying function with the representation
	\begin{equation} \label{CharRVF}
		f(x) = x^\alpha \exp \Big\{ c + \int_B^x \frac{\varepsilon(t)}{t} \, dt \Big\} \, , \qquad x\geq B.
	\end{equation}
	Note that in \eqref{CharRVF} we modified the function $\varepsilon=\varepsilon(t)$ in consequence of the assumed regularity on $f$.  A further modification of $\varepsilon$ admits an appropriate choice of the positive number $B$. It follows that
	\begin{equation} \label{CharRVF2}
		\lim_{x\to\infty} x \frac{f'(x)}{f(x)} = \alpha
	\end{equation}
	since
	\[	x \frac{f'(x)}{f(x)} = \alpha + \varepsilon(x), \qquad \quad x\geq B. \]
	Note that in \eqref{CharRVF} we did not change the behavior of $\varepsilon$ at infinity, i.e., the function $\varepsilon = \varepsilon(x)$ tends to $0$ as $x\to \infty$. All in all we have shown that the Definitions \ref{SlowlyVF} and \ref{RegularlyVF} do not determine the regularity of functions, i.e., a function satisfying Definition \ref{SlowlyVF} or Definition \ref{RegularlyVF} can be of arbitrary smoothness, respectively. \\	 
Now we are ready to define the set $\mathcal{W}(\R)$ of all admissible weight functions $w_*$. Remark that we apply the weight 
\[ w(|k|) := e^{w_*(|k|)} \]
in Definition \ref{ModSpace} of the modulation spaces $\mathcal{M}^{w_*}_{p,q}(\R^n)$. 
\begin{df} \label{DefClassWeight}
	The set $\mathcal{W} (\R)$ contains all functions $w_* = w_*(x)$ , $x\in [0,\infty)$, such that $w_*$ satisfies the following conditions:
	\begin{enumerate}[label=(A\arabic*)]
		\item $w_* \in C^\infty (0, \infty)$ is regularly varying with index $\alpha$ such that $0\leq \alpha<1$, \label{A1}
		\item $w_* \geq 1$, \label{A5}
		\item $w_*$ is strictly increasing, \label{A3}
		\item $\displaystyle \lim_{t \to \infty} ( w_*(t) - M \log t ) = \infty$ for all $M$, \label{A4}
		\item $w'_*(0)=0$, \label{A2}
		\item $w{''}_*$ has finite changes of sign, i.e., there exists a number $\tau$ such that $w{''}_*(x) <0$ for all $x> \tau$. \label{A6}
	\end{enumerate}	
\end{df}

Let $w_* \in \mathcal{W} (\R)$. Due to \ref{A1} and the previous arguments $w_*$ can be represented by \eqref{CharRVF} and satisfies \eqref{CharRVF2}. Thus, for all $\epsilon > 0$ there exists a positive number $x_0$ such that
\begin{enumerate}[label=(B\arabic*)]
	\item $\qquad \qquad \Big| x \frac{w'_*(x)}{w_*(x)} \Big| < \alpha + \epsilon \quad$ for all $x\geq x_0$. \label{A7}
\end{enumerate}
Note that $\alpha \in [0,1)$ in \ref{A1}. If we put $\epsilon := 1-\alpha$, then we can fix the number $x_0$. \\
By considering the derivative of the function $p(x):= \frac{x}{w_*(x)}$ we know that $p=p(x)$ is monotonically increasing for all $x\geq x_0$, where the same $x_0$ is used as in \ref{A7}. However, $p=p(x)$ can attain several local maxima for $x\in [0,x_0]$. Moreover, $p(x) \to \infty$ as $x\to \infty$ due to \ref{A1} and the corresponding representation of regularly varying functions, see \eqref{CharRVF}. Thus, we may take $x_1\geq x_0$ such that 
\begin{enumerate}[label=(B\arabic*)]
	\setcounter{enumi}{1}
	\item $\qquad \qquad \frac{t_0}{w_*(t_0)} \leq  \frac{x_1}{w_*(x_1)} \leq \frac{t_1}{w_*(t_1)}$ \label{A8}
\end{enumerate}
holds for all $t_0 \leq x_1$ and for all $t_1 \geq x_1$. Then we put 
\begin{equation} \label{Xstart}
	\tilde{x} := \max (\tau, 2 x_0, 2 x_1, B),
\end{equation}
where $\tau$ comes from \ref{A6} and $B$ comes from the representation \eqref{CharRVF}. \\
\begin{rem}
	(i) For every fixed weight function $w_* \in \mathcal{W}(\R)$ there exists $\tilde{x}$ as in \eqref{Xstart} such that \ref{A7} and \ref{A8} are satisfied. Here it is essential that $\alpha<1$ in \ref{A1}. If we suppose $\alpha=1$, then the arguments which lead to \ref{A7} and \ref{A8} would fail. However \ref{A7} and \ref{A8} are fundamental in order to prove Lemma \ref{numerGeneral} below. \\
	(ii) In case $\alpha=0$ in condition \ref{A1} we call the weight functions slowly varying, cf. Definition \ref{SlowlyVF}. \\
	(iii) \ref{A5} and \ref{A3} in Definition \ref{DefClassWeight} are natural conditions for weight functions. \\
	(iv) Assumption \ref{A4} ensures that the weights $w(|k|) := e^{w_*(|k|)}$, $w_* \in \mathcal{W}(\R)$, are functions increasing faster than any polynomial. It means that we will treat modulation spaces equipped with ultradifferentiable weights. On the other hand $\alpha <1$ in assumption \ref{A1} indicates that the modulation spaces are equipped with weights of at most subexponential growth. \\
	(v) Conditions \ref{A1}, \ref{A2} and \ref{A6} have technical character. Nevertheless they are reasonable for defining a suitable class of weight functions for modulation spaces. \\
	(vi) By putting $w_* (|k|) = \langle k \rangle^{\frac{1}{s}}$ the Gevrey-modulation space $\mathcal{GM}^s_{p,q} (\R^n)$ introduced in \cite{rrs} coincides with $\mathcal{M}^{w_*}_{p,q} (\R^n)$ for $s>1$. Clearly we have $w_*\in \mathcal{W}(\R)$. Moreover, it holds $\mathcal{UM}^{p,q}(\R^n) = \mathcal{M}^{w_*}_{p,q} (\R^n)$ if $w_*(|k|)= \log \langle k \rangle_{e^e} \log\log \langle k \rangle_{e^e}$, which is obviously contained in the set $\mathcal{W} (\R)$ as well (cf. \cite{rrs}).
\end{rem}

\begin{ex}
	We introduce the notation 
	\[ l_m \langle k \rangle_* := \underbrace{\log \log \ldots \log}_{m\text{-times}} \langle k\rangle_* \, , \]
	i.e., $l_m$ is the $m$ times composition of the $\log$-function with itself. Here $*$ stands for a sufficiently large number (depending on $m$) such that $l_m \langle k \rangle_* \geq 1$ for all $k\in \Z^n$. A natural extension of weight functions mentioned in the previous remark (vi) are functions of the form
	\begin{equation} \label{WeightEx1}
		w_* (|k|) = \langle k\rangle^{\frac{1}{s}} \big( \log\langle k \rangle_* \big)^{r_1} \big( \log\log \langle k \rangle_* \big)^{r_2} \cdot \ldots \cdot \big( l_m \langle k \rangle_* \big)^{r_m} 
	\end{equation}
	with parameters $s\in (1,\infty)$ and $r_k \in \R$, $k=1,2,\ldots, m$. Then $w_* \in \mathcal{W}(\R)$ and $w_*$ is regularly varying of index $\alpha = \frac{1}{s}$. 	Moreover, there exist two cases for $s=\infty$, that is, $w_*$ is a slowly varying function. If we suppose $r_1>1$, then $r_2,r_3,\ldots, r_m \in \R$. If $r_1=1$ and there exists $l\in\N_+$ such that $r_i=0$ for $i=2,3,\ldots, l$, then $r_{l+1}>0$ and $r_{l+2}, r_{l+3}, \ldots, r_m \in \R$. It follows $w_* \in \mathcal{W}(\R)$.
\end{ex}

\subsection{Embedding into Spaces of Ultradifferentiable Functions}

We can show that only very smooth functions have a chance to belong to the space $\mathcal{M}^{w_*}_{p,q} (\R^n)$ if $w_* \in \mathcal{W}(\R)$. Here we apply the theory of so-called ultradifferentiable functions, see, e.g., \cite{bjoerck, bonet, braun, gs, pascu, schindl, triebel}. We notice that $w_* \in \mathcal{W}(\R)$ is a weight function in the sense of \cite{braun}. Now we recall the definition of the space $\mathcal{E}_{\{w_*\}} (\R^n)$ of all $\omega_*$-ultradifferentiable functions of Roumieu type, see e.g. Definition 4.1 in \cite{braun},
\[ \mathcal{E}_{\{w_*\}} (\R^n) := \Big\{ f\in C^\infty (\R^n): \forall \mbox{ compact } K\subset \R^n \,\, \exists m\in\N:\sup_{x\in K, \alpha\in\N^n} \frac{|D^\alpha f (x)|}{e^{\frac{1}{m} \sup_{y>0}(m|\alpha| y - w_*(e^y))}} < \infty \Big\} . \]
So we try to give a characterisation of functions $f\in \mathcal{M}^{w_*}_{p,q} (\R^n)$, $w_* \in \mathcal{W}(\R)$, by certain growing conditions on their derivatives analogously to Corollary 2.11 in \cite{rrs}. 
\begin{prop} \label{UltraCharFunc}
	Let $1\leq p,q \leq \infty$ and $w_* \in \mathcal{W} (\R)$ be a fixed weight function. Then the following continuous embedding holds:  
	\[ \mathcal{M}^{w_*}_{p,q} (\R^n) \hookrightarrow \mathcal{E}_{\{w_*\}} (\R^n) \, . \]
\end{prop}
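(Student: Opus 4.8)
The plan is to reduce to the largest space in the scale and then estimate the derivatives of $f$ pointwise and uniformly. By the monotonicity embeddings of Lemma \ref{einbettung} we have $\mathcal{M}^{w_*}_{p,q}(\R^n)\hookrightarrow\mathcal{M}^{w_*}_{\infty,\infty}(\R^n)$ with $\|f\|_{\mathcal{M}^{w_*}_{\infty,\infty}}\lesssim\|f\|_{\mathcal{M}^{w_*}_{p,q}}$, so it suffices to establish the continuous embedding $\mathcal{M}^{w_*}_{\infty,\infty}(\R^n)\hookrightarrow\mathcal{E}_{\{w_*\}}(\R^n)$ and then compose. For $f\in\mathcal{M}^{w_*}_{\infty,\infty}$ I write $f=\sum_{k}\Box_k f$ (using $\sum_k\sigma_k\equiv1$) and aim to bound $\sum_k\|D^\alpha\Box_k f\|_{L^\infty}$ for every multi-index $\alpha$. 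Once this series is shown to converge, $f\in C^\infty$ follows with $D^\alpha f=\sum_k D^\alpha\Box_k f$, and the resulting pointwise bound on $|D^\alpha f(x)|$ will be uniform in $x$, hence valid on every compact $K\subset\R^n$.

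The first step is a Bernstein-type estimate for a single block $\Box_k f$, whose Fourier transform is supported in the cube $Q_k$. I would fix an auxiliary $\tilde\sigma\in C^\infty_0(\R^n)$ with $\tilde\sigma\equiv1$ on $[-1,1]^n$, set $\tilde\sigma_k(\xi):=\tilde\sigma(\xi-k)$, so that $\tilde\sigma_k\equiv1$ on $Q_k$, and use that $\widehat{\Box_k f}$ is supported in $Q_k$ to write
\[
D^\alpha\Box_k f = (2\pi)^{-n/2}\,\F^{-1}\!\big[\xi^\alpha\tilde\sigma_k\big]*\Box_k f .
\]
Young's inequality gives $\|D^\alpha\Box_k f\|_{L^\infty}\le(2\pi)^{-n/2}\|\F^{-1}[\xi^\alpha\tilde\sigma_k]\|_{L^1}\|\Box_k f\|_{L^\infty}$. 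Since the modulation $\xi\mapsto\xi-k$ preserves $L^1$-norms, the binomial expansion $\xi^\alpha=(k+\zeta)^\alpha=\sum_{\beta\le\alpha}\binom{\alpha}{\beta}k^{\alpha-\beta}\zeta^\beta$ together with the identity $\F^{-1}[\zeta^\beta\tilde\sigma]=D^\beta\psi$ for the fixed band-limited Schwartz function $\psi:=\F^{-1}\tilde\sigma$, and the Bernstein inequality $\|D^\beta\psi\|_{L^1}\le C_1^{|\beta|}\|\psi\|_{L^1}$, yield after resumming the binomial the clean bound
\[
\|D^\alpha\Box_k f\|_{L^\infty}\le C_0\,(|k|+C_1)^{|\alpha|}\,\|\Box_k f\|_{L^\infty}
\]
with $C_0,C_1$ independent of $k$ and $\alpha$.

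Summing over $k$ and factoring out the $\mathcal{M}^{w_*}_{\infty,\infty}$-norm reduces the whole problem to the numerical series $S_\alpha:=\sum_{k\in\Z^n}(|k|+C_1)^{|\alpha|}e^{-w_*(|k|)}$, and the core of the argument is to dominate $S_\alpha$ by the weight occurring in the definition of $\mathcal{E}_{\{w_*\}}$. Rewriting that denominator via the substitution $u=e^y$ as $\exp\big\{\tfrac1m\sup_{y>0}(m|\alpha|y-w_*(e^y))\big\}=\sup_{u>1}u^{|\alpha|}e^{-w_*(u)/m}$, I would split $e^{-w_*(|k|)}=e^{-w_*(|k|)/m}\,e^{-(1-1/m)w_*(|k|)}$ with $m\ge2$, bound the first factor termwise (up to finitely many small $k$ and a geometric factor arising from $|k|+C_1\le 2|k|$) by $\sup_{u>1}u^{|\alpha|}e^{-w_*(u)/m}$, and invoke \ref{A4} to see $\sum_k e^{-(1-1/m)w_*(|k|)}<\infty$, since \ref{A4} forces $w_*(t)\ge M\log t$ for every $M$ and all large $t$. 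This gives $S_\alpha\le C_m\,2^{|\alpha|}\sup_{u>1}u^{|\alpha|}e^{-w_*(u)/m}$; the leftover geometric factor $2^{|\alpha|}$ is absorbed by passing to a larger parameter $m'$, which is exactly where regular variation \ref{A1} is used, the ratio $w_*(v)/w_*(v/2)$ remaining bounded as $v\to\infty$ precisely because the index of $w_*$ is $<1$. Combining the three steps yields $|D^\alpha f(x)|\le C\,\|f\|_{\mathcal{M}^{w_*}_{\infty,\infty}}\,\sup_{u>1}u^{|\alpha|}e^{-w_*(u)/m'}$ uniformly in $x$ and $\alpha$, which is the defining estimate of $\mathcal{E}_{\{w_*\}}(\R^n)$ for the single index $m'$, and this uniform bound by the $\mathcal{M}^{w_*}_{\infty,\infty}$-norm is also the continuity of the embedding.

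I expect the last step to be the main obstacle: matching the discrete weighted sum $S_\alpha$ to the exact form of the Roumieu weight and carrying out the absorption of the geometric factors through a change of the parameter $m$, which hinges essentially on the regular variation of $w_*$ with index strictly below $1$ (the feature for which the class $\mathcal{W}(\R)$ in Definition \ref{DefClassWeight} is tailored). The Bernstein reduction, by contrast, is routine once the band-limited reproducing function $\psi$ has been fixed, and the reduction to $\mathcal{M}^{w_*}_{\infty,\infty}$ is immediate from Lemma \ref{einbettung}.
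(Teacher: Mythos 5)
Your proposal is correct and follows essentially the same route as the paper: reduction to $\mathcal{M}^{w_*}_{\infty,\infty}$ via Lemma \ref{einbettung}, a Bernstein-type block estimate giving $|D^\alpha f(x)|\lesssim \big(\sum_{k}(1+|k|)^{|\alpha|}e^{-w_*(|k|)}\big)\|f\|_{\mathcal{M}^{w_*}_{\infty,\infty}}$ (the paper's \eqref{derIn}, cited from \cite{rrs} and rederived by you), and then domination of this lattice sum by the Roumieu weight $\sup_{u>1}u^{|\alpha|}e^{-w_*(u)/m}$. The only cosmetic difference lies in the last step: the paper compares the sum with $\int_{\R^n}|\xi|^{|\alpha|}e^{-w_*(|\xi|)}\,d\xi$ and splits $e^{-w_*}=e^{-w_*/2}\cdot e^{-w_*/2}$ to land directly on $m=2$, whereas you split $\tfrac1m w_*+(1-\tfrac1m)w_*$ and absorb the leftover $2^{|\alpha|}$ by enlarging $m$ via the bounded ratio $w_*(v)/w_*(v/2)$ --- which, as a minor remark, requires only that the index of regular variation be finite, not that it be strictly below $1$.
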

\begin{proof}
Let $f \in \mathcal{M}^{w_*}_{p,q} (\R^n)$. Following the arguments in the proof of Corollary 2.11 in \cite{rrs} yields
\begin{equation} \label{derIn}
	|D^\alpha f(x)| \lesssim \bigg( \sum_{|k|<N} (1+|k|)^{|\alpha|} e^{-w_*(|k|)} \bigg) \|f\|_{\mathcal{M}^{w_*}_{\infty,\infty}}
\end{equation}
for any $N\in \N$.
Taking account of \ref{A5} observe that
\begin{eqnarray*}
	 \sum_{|k|<N} (1+|k|)^{|\alpha|} e^{-w_*(|k|)} & \lesssim & \int_{\R^n} |\xi|^{|\alpha|} e^{-w_*(|\xi|)} \, d\xi  \\
	 	& \lesssim & \int_{|\xi | >1} |\xi|^{|\alpha|} e^{-w_*(|\xi|)} \, d\xi \\
	 	& \leq & \sup_{|\xi| >1} \big( |\xi|^{|\alpha|} e^{-\frac{w_*(|\xi|)}{2}} \big) \int_{\R^n} e^{-\frac{w_*(|\xi|)}{2}} \, d\xi  \\
		& \lesssim &  \sup_{|\xi| >1} \big( |\xi|^{2|\alpha|} e^{-w_*(|\xi|)} \big)^{\frac{1}{2}} \\
		& = & \sup_{t>1} \big( e^{2|\alpha| \log t -w_*(t)} \big)^{\frac{1}{2}} \\
		& \stackrel{[y = \log t]}{=} & \sup_{y>0} e^{\frac{1}{2} (2|\alpha| y -w_*(e^y))} \, . \\
\end{eqnarray*}
Lemma \ref{einbettung} and the definition of the space $\mathcal{E}_{\{w_*\}} (\R^n)$ yield the desired result.
\end{proof}

A usual technique to describe regularity of a function is to estimate the growth of its derivatives by so-called weight sequences. For this purpose we define the so-called Carleman class of Roumieu type $\mathcal{E}_{\{M_p\}} (\R^n)$, see e.g. Definition 7 in \cite{bonet},
	\[ \mathcal{E}_{\{M_p\}} (\R^n) := \Big\{ f\in C^\infty (\R^n): \forall \mbox{ compact } K\subset \R^n \,\, \exists h>0: \sup_{x\in K, \alpha\in\N^n} \frac{|D^\alpha f (x)|}{h^{|\alpha|} M_{|\alpha|}} < \infty \Big\}, \]
where $(M_p)_{p\in \N}$ is a weight sequence, see Definition 4 in \cite{bonet}. Let $w_* \in \mathcal{W}(\R)$ be a weight function such that $w_*$ is regularly varying of index $0<\alpha <1$. Then $w_*$ is so-called Gelfand-Shilov admissible, see Definition 3.1. in \cite{pascu}. By stressing Definition 3.2. and Proposition 3.1. in \cite{pascu} it follows that the associated sequence $(M_p)_{p\in \N}$ of the weight function $w_*$, which is defined by 
\begin{equation} \label{assocSeq}
	M_p := \sup_{r>0} \Big( \frac{r^p}{e^{w_*(r)}} \Big)
\end{equation}
for all $p\in\N$, satisfies 
\begin{enumerate}[label=\roman*)]
	\item $M_p^2 \leq M_{p-1} M_{p+1}$ for all $p\in\N$, \label{ws-31}
	\item there exists a constant $H\geq 1$ such that $M_{p+q} \leq H^{p+q} M_p M_q$ for all $p,q\in\N$,  \label{ws-32}
	\item there exist $\eta<\frac{1}{2}$, $h>0$ such that $M_p \geq h^p p^{p\eta}$ for all $p\in\N$.  \label{ws-33}
\end{enumerate}
For the sake of normalisation we can also define the associated sequence $(M_p)_{p\in \N}$ of the weight function $w_*$ by 
\begin{eqnarray*}
	N_p & := & \sup_{r>0} \Big( \frac{r^p}{e^{w_*(r)}} \Big) \, ,\\
	M_p & := & N_p / N_0 \, .
\end{eqnarray*}
Then we have the properties \ref{ws-31}-\ref{ws-33} as well as $M_0 = 1$. In fact, $(M_p)_{p\in\N}$ is a weight sequence in the sense of \cite{bonet}. 

\begin{prop} \label{UltraCharSeq}
	Let $1\leq p,q \leq \infty$ and $w_* \in \mathcal{W} (\R)$ be a fixed weight function such that $w_*$ is regularly varying of index $0<\alpha<1$. If $f\in \mathcal{M}^{w_*}_{p,q} (\R^n)$, then $f$ is infinitely often differentiable and there exists a constant $C=C(f,n)$ such that
	\[ |D^\alpha f| \leq C^{|\alpha|+1} M_{|\alpha|}, \]
	where $M_p$, $p\in \N$, is the associated sequence of the weight function $w_*$ defined by \eqref{assocSeq}. \\
	Moreover, it follows $f\in \mathcal{E}_{\{M_p\}} (\R^n)$.
\end{prop}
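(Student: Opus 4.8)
The plan is to upgrade the pointwise derivative estimate already obtained in the proof of Proposition \ref{UltraCharFunc} into a bound formulated in terms of the associated sequence $(M_p)_p$. First I would record that, since $f\in\mathcal{M}^{w_*}_{p,q}(\R^n)$, Lemma \ref{einbettung} gives $f\in\mathcal{M}^{w_*}_{\infty,\infty}(\R^n)$ with finite norm, and that Proposition \ref{UltraCharFunc} already guarantees $f\in C^\infty(\R^n)$, so the infinite differentiability is inherited and only the quantitative derivative bound remains. Starting from \eqref{derIn} and running the very chain of estimates carried out in the proof of Proposition \ref{UltraCharFunc} (which ends at $\sup_{y>0}e^{\frac{1}{2}(2|\alpha|y-w_*(e^y))}$), the substitution $t=e^y$ yields, uniformly in $x\in\R^n$ and in the multi-index $\alpha$,
\[ |D^\alpha f(x)| \lesssim \Big(\sup_{t>1}\frac{t^{2|\alpha|}}{e^{w_*(t)}}\Big)^{\frac{1}{2}}\,\|f\|_{\mathcal{M}^{w_*}_{\infty,\infty}}. \]

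Next I would read off the supremum in terms of the associated sequence. By the definition \eqref{assocSeq} we have $M_{2|\alpha|}=\sup_{r>0} r^{2|\alpha|}e^{-w_*(r)}$, and restricting the supremum to $t>1$ only decreases it, so $\sup_{t>1} t^{2|\alpha|}e^{-w_*(t)}\leq M_{2|\alpha|}$, which gives $|D^\alpha f(x)|\lesssim M_{2|\alpha|}^{1/2}\,\|f\|_{\mathcal{M}^{w_*}_{\infty,\infty}}$.

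The crucial step, and the one I expect to be the main obstacle, is that the estimate naturally produces the index $2|\alpha|$ rather than $|\alpha|$; closing this gap is precisely where the hypothesis $0<\alpha<1$ enters, since this range makes $w_*$ Gelfand--Shilov admissible and thereby forces the associated sequence to satisfy properties \ref{ws-31}--\ref{ws-33}. Applying the submultiplicativity property \ref{ws-32} with $p=q=|\alpha|$ gives $M_{2|\alpha|}\leq H^{2|\alpha|}M_{|\alpha|}^2$, whence $M_{2|\alpha|}^{1/2}\leq H^{|\alpha|}M_{|\alpha|}$. Combining the inequalities yields $|D^\alpha f(x)|\lesssim H^{|\alpha|}M_{|\alpha|}\,\|f\|_{\mathcal{M}^{w_*}_{\infty,\infty}}$, and absorbing the implicit dimensional constant together with $\|f\|_{\mathcal{M}^{w_*}_{\infty,\infty}}$ and $H$ into a single constant $C=C(f,n)$ chosen large enough (concretely $C\geq\max(H,1)$ and $C$ exceeding the product of the implicit constant and the norm) produces $|D^\alpha f|\leq C^{|\alpha|+1}M_{|\alpha|}$, as claimed.

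Finally, the membership $f\in\mathcal{E}_{\{M_p\}}(\R^n)$ follows immediately from this bound: choosing $h:=C$ in the definition of the Carleman class gives $|D^\alpha f(x)|/(h^{|\alpha|}M_{|\alpha|})\leq C$ for all $x\in\R^n$ and all $\alpha$, so the defining supremum is finite over every compact $K\subset\R^n$ (indeed globally). No further work is needed here beyond quoting the estimate just established; the only genuinely nontrivial ingredient is the index-doubling argument resolved via \ref{ws-32}.
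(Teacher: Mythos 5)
Your proposal is correct and follows essentially the same route as the paper's proof: starting from \eqref{derIn}, splitting off $\int_{\R^n} e^{-w_*(|\xi|)/2}\,d\xi$, recognizing the remaining supremum as $(M_{2|\alpha|})^{1/2}$ via \eqref{assocSeq}, and resolving the index doubling through property \ref{ws-32}, i.e.\ $M_{2|\alpha|}\leq H^{2|\alpha|}M_{|\alpha|}^2$. Your identification of $0<\alpha<1$ as the hypothesis guaranteeing \ref{ws-32} (Gelfand--Shilov admissibility) matches exactly the role it plays in the paper.
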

\begin{proof}
Starting point is \eqref{derIn}. Taking account of \eqref{assocSeq} and the corresponding properties of the sequence $M_p$, $p\in \N$, it follows
\begin{eqnarray*}
	 \sum_{|k|<N} (1+|k|)^{|\alpha|} e^{-w_*(|k|)} & \lesssim & \int_{\R^n} |\xi|^{|\alpha|} e^{-w_*(|\xi|)} \, d\xi  \\
	 	& \leq & \sup_{\xi\in\R^n} \big( |\xi|^{|\alpha|} e^{-\frac{w_*(|\xi|)}{2}} \big) \int_{\R^n} e^{-\frac{w_*(|\xi|)}{2}} \, d\xi  \\
		& \lesssim &  \sup_{\xi\in\R^n} \big( |\xi|^{2|\alpha|} e^{-w_*(|\xi|)} \big)^{\frac{1}{2}} \\
		& = & \big( M_{2|\alpha|} \big)^{\frac{1}{2}} \\
	 	& \leq & \big( H^{2|\alpha|} M^2_{|\alpha|} \big)^{\frac{1}{2}} \\
		& = & H^{|\alpha|} M_{|\alpha|}
\end{eqnarray*}
with a constant $H\geq 1$ coming from the estimate \ref{ws-32} above. The proof is complete.
\end{proof}

\begin{rem}
We shall give a remark to the restriction on $w_*$ in Proposition \ref{UltraCharSeq}. If $w_*$ is regularly varying of index $0<\alpha<1$, then there exists a constant $D\geq 1$ such that
\begin{equation} \label{weightRom}
	2 w_*(t) \leq w_* (Dt) + D
\end{equation}
holds for all $t\geq 0$. It is easy to check that \eqref{weightRom} fails for sufficiently large $t$ if $w_*$ is regularly varying of index $\alpha=0$, i.e., if $w_*$ is slowly varying, see Definitions \ref{SlowlyVF} and \ref{RegularlyVF}. However inequality \eqref{weightRom} is crucial in the sense that it holds if and only if there exists a constant $H\geq 1$ such that $M_{p+q} \leq H^{p+q} M_p M_q$ for all $p,q\in\N$, where $M_p$ is explained by \eqref{assocSeq}. For this we refer to Komatsu \cite[Proposition 3.6]{komatsu} and Pascu \cite[Proposition 3.1]{pascu}. We used this estimate in the proof of Proposition \ref{UltraCharSeq}. Moreover, if the assumptions of Proposition \ref{UltraCharSeq} are satisfied, then Corollary 16 in \cite{bonet} gives $\mathcal{E}_{\{w_*\}} (\R^n) = \mathcal{E}_{\{M_p\}} (\R^n)$. This makes clear that for $f\in \mathcal{M}^{w_*}_{p,q} (\R^n)$ the statements of Proposition \ref{UltraCharFunc} and Proposition \ref{UltraCharSeq} are equal if the weight function $w_* \in \mathcal{W}(\R)$ is regularly varying of index $\alpha \in (0,1)$. However, if $w_*$ is slowly varying, then $f\in \mathcal{E}_{\{w_*\}} (\R^n)$ but $f \notin \mathcal{E}_{\{M_p\}} (\R^n)$.
\end{rem}

\begin{ex}
	Let $w_*(|k|) = \langle k \rangle^{\frac{1}{s}}$ with $s>1$, that is $\mathcal{M}^{w_*}_{p,q} (\R^n) = \mathcal{GM}^s_{p,q}(\R^n)$. Then one can show that $w_* \in \mathcal{W}(\R)$ and $w_*$ is regularly varying of index $0< \frac{1}{s}<1$. Furthermore, the associated sequence of $w_*$ is $M_p = (p!)^s$, see \cite{bonet} and Rodino \cite[Proposition 1.4.2]{rodino}. Due to Proposition \ref{UltraCharSeq} there exists a constant $C$ such that
	\[ |D^\alpha f| \leq C^{|\alpha|+1} (\alpha !)^s \]
	for all $f\in \mathcal{GM}^s_{p,q}(\R^n)$. But this is exactly the definition of Gevrey spaces $G^s (\R^n)$ in the sense of Definition 1.4.1. in \cite{rodino}. Thus, we haveµ shown $\mathcal{GM}^s_{p,q}(\R^n) \subset G^s (\R^n)$. So applying the theory of ultradifferentiable functions in fact improved the estimate of Corollary 2.11 in \cite{rrs}.
\end{ex}

	\section{Multiplication Algebras} \label{MultAlg}
	
	In this section our goal is to prove that the modulation spaces $\mathcal{M}^{w_*}_{p,q} (\R^n)$ are algebras under pointwise multiplication if $w_* \in \mathcal{W}(\R)$. This property immediately explains analytic superposition due to Taylor expansion. \\
	The following lemma is a fundamental tool in order to prove later results. 
\begin{lma}\label{numerGeneral}
	Let $w_* \in \mathcal{W}(\R)$ be a fixed weight function. Suppose $\tilde{x}$ to be as in \eqref{Xstart} associated to $w_*$. Then there exists a positive real number $s$ such that
	\begin{equation}\label{ws-20-gen}
		w_* (x) \le w_* (y) + w_* (x-y) - s \, \min (w_* (y), w_* (x-y))
	\end{equation}
	holds for all $(x,y) \in \R_+^2 \setminus \{ (x,y) \in \R_+^2 : y\leq x < 2 \tilde{x}\}$.
\end{lma}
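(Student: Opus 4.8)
The plan is to reduce the two–variable inequality to a one–variable estimate and then extract the decisive gain from the sub-linear growth index $\alpha<1$. Since \eqref{ws-20-gen} is invariant under the interchange $y\leftrightarrow x-y$ and the excluded set is symmetric under this map, I would assume without loss of generality that $0\le y\le x-y$, i.e. $y\le x/2$. Because $w_*$ is monotonically increasing this gives $\min(w_*(y),w_*(x-y))=w_*(y)$, so the claim reduces to producing $s>0$ with
\[ w_*(x)-w_*(x-y)\le (1-s)\,w_*(y) \]
for all $0\le y\le x/2$ with $x\ge 2\tilde{x}$. The case $y=0$ is trivial since the left-hand side vanishes and $w_*(0)\ge 0$, so I would assume $y>0$. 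Note the reduction forces $x-y\ge x/2\ge \tilde{x}\ge \tau$, so both $x$ and $x-y$ lie in the range where $w_*$ is concave by \ref{A6} and where the threshold conditions \ref{A7} and \ref{A8} are available.

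For the main estimate I would apply the mean value theorem, writing $w_*(x)-w_*(x-y)=w_*'(\xi)\,y$ for some $\xi\in(x-y,x)$. Concavity \ref{A6} gives $w_*'(\xi)\le w_*'(x-y)$, and since $x-y\ge\tilde{x}\ge x_0$ condition \ref{A7} yields $w_*'(x-y)\le (\alpha+\epsilon)\,w_*(x-y)/(x-y)$; here I would fix $\epsilon\in(0,1-\alpha)$, which is possible precisely because $\alpha<1$ in \ref{A1}. (The same bound also follows from the monotonicity of $p(t)=t/w_*(t)$ on $[x_0,\infty)$.) This produces $w_*(x)-w_*(x-y)\le (\alpha+\epsilon)\,\frac{w_*(x-y)}{x-y}\,y$.

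It then remains to convert the factor $\frac{w_*(x-y)}{x-y}\,y$ into $w_*(y)$, i.e. to prove $p(y)\le p(x-y)$. This is exactly where \ref{A8} enters: if $y\le x_1$ I would use the transition point $x_1$ (taking $t_0=y\le x_1\le x-y=t_1$) to get $p(y)\le p(x_1)\le p(x-y)$, while if $y>x_1$ both arguments exceed $x_1\ge x_0$ and the monotonicity of $p$ on $[x_0,\infty)$ gives $p(y)\le p(x-y)$ directly; in either case $\frac{w_*(x-y)}{x-y}\,y\le \frac{w_*(y)}{y}\,y=w_*(y)$. Combining the three steps gives $w_*(x)-w_*(x-y)\le(\alpha+\epsilon)\,w_*(y)$, so the lemma holds with $s:=1-(\alpha+\epsilon)>0$.

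The step I expect to be the genuine obstacle is producing a \emph{strictly} positive $s$ rather than mere subadditivity. Plain concavity only yields $w_*(x)\le w_*(y)+w_*(x-y)$, the case $s=0$, and at the worst point $y=x/2$ the comparison $p(y)\le p(x-y)$ degenerates to equality, so no gain can come from that inequality alone; the gain must come from the strict bound $\alpha+\epsilon<1$ in \ref{A7}, available only because the index satisfies $\alpha<1$ (this is why $\alpha=1$ must be excluded). The accompanying subtlety is purely bookkeeping: one must ensure the reference point $\tilde{x}$ in \eqref{Xstart} is large enough that \ref{A7} holds with a chosen $\epsilon<1-\alpha$ throughout $[\tilde{x},\infty)$, and that the transition-point comparison \ref{A8} covers the small-$y$ range where $w_*$ is not yet concave — which is precisely the role of the maxima defining $\tilde{x}$.
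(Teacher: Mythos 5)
Your proposal is correct and uses the same three ingredients as the paper (mean value theorem, concavity \ref{A6}, and the bounds \ref{A7}, \ref{A8}), but it is organized along a genuinely leaner route. The paper does not exploit the symmetry $y\leftrightarrow x-y$: it splits the region $x\ge 2\tilde{x}$, $y\le x$ into the two cases $x/2\le y\le x$ and $y\le x/2$, studies $h(x,y)=w_*(x)-w_*(y)-(1-s)w_*(x-y)$ (resp.\ its twin), rules out interior critical points via the first-order conditions together with \ref{A3}, \ref{A2} and \ref{A6}, and then checks the boundary pieces $x=2\tilde{x}$, $x=y$ (resp.\ $y=0$) and $y=x/2$ separately, each by the mean value theorem and estimates of the form $\frac{z}{w_*(z)}w_*'(\xi)\le \frac{\tilde{x}}{w_*(\tilde{x})}w_*'(\tilde{x})\le C<1$. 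You collapse the two cases into one by symmetry and estimate $w_*(x)-w_*(x-y)=w_*'(\xi)\,y$ directly on the whole region, converting $\frac{w_*(x-y)}{x-y}\,y$ into $w_*(y)$ via the comparison $p(y)\le p(x-y)$ (from \ref{A8} below the transition point $x_1$, and from the monotonicity of $p$ above it). This eliminates the extreme-value analysis and the boundary decomposition entirely; what the paper's longer route buys is essentially nothing beyond bookkeeping, while your version isolates the quantitative source of the gain $s>0$ more transparently, namely a uniform bound strictly below $1$ for $t\,w_*'(t)/w_*(t)$, which is exactly where $\alpha<1$ enters and why the case $\alpha=1$ is excluded.

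Two small repairs are needed. First, your reduction ``WLOG $0\le y\le x-y$'' presupposes $y\le x$; the lemma's domain also contains all points with $y>x$ (for any $x\ge 0$), where the symmetry map does not apply. As in the paper this case is disposed of in one line — $w_*(x)\le w_*(y)$ by monotonicity \ref{A3}, so \eqref{ws-20-gen} holds for any $0<s\le 1$ — but it must be stated. Second, the paper fixes $\epsilon:=1-\alpha$ in \ref{A7} \emph{before} determining $x_0$ and hence $\tilde{x}$ in \eqref{Xstart}; for your smaller $\epsilon\in(0,1-\alpha)$ the corresponding threshold may exceed the given $\tilde{x}$, and you cannot simply ``take $\tilde{x}$ large enough,'' since the lemma fixes $\tilde{x}$. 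The correct patch is to observe that $t\mapsto t\,w_*'(t)/w_*(t)$ is continuous, is $<1$ pointwise on $[\tilde{x},\infty)$ by \ref{A7} with $\epsilon=1-\alpha$, and tends to $\alpha<1$ by \eqref{CharRVF2}, hence is bounded by some uniform $C<1$ on $[\tilde{x},\infty)$; then $s:=1-C$ works. This is precisely the uniform bound the paper itself uses implicitly for its boundary piece \eqref{c1-1-3-gen}, so your argument closes with the same observation and no new hypothesis.
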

\begin{proof}
Let $y\geq x$. From $w_*$ increasing  and $\min (w_* (y), w_* (x-y)) = w_* (x-y)$ we derive the validity of \eqref{ws-20-gen} with $0 < s\le 1$. \\
Now we turn to the case $x>y$. Let $x \geq 2 \tilde{x}$. We divide the proof in two steps. \\
{\em Step 1.} Suppose $0\leq y\leq x\leq 2y$, i.e., $\min(w_*(y), w_*(x-y)) = w_*(x-y)$. Note that it immediately follows $\tilde{x} \leq \frac{x}{2} \leq y \leq x$. We consider the function 
\[ h(x,y) := w_*(x) - w_*(y) -(1-s) w_*(x-y). \]
We perform basic extreme value computations and obtain
 \begin{eqnarray*}
 	w'_*(x) - (1-s) w'_* (x-y) & = & 0, \\
	-w'_*(y) + (1-s) w'_*(x-y) & = & 0,
 \end{eqnarray*}
 which yields $w'_*(x) = w'_*(y)$ for all $x,y$ satisfying the assumptions. In particular, we have monotone behavior for $w'_*$ in the considered interval which yields $x=y$. Moreover, this gives $w'_*(x) = w'_*(y) = 0$ taking account of \ref{A2}. But this is a contradiction because of the monotone behavior of $w'_*$, cf. \ref{A3}. Thus, $h$ attains its maximum on the boundary of the described domain, that is, $x=2\tilde{x}$, $x=y$, $y=\frac{x}{2}$. Hence, we need to consider
 \begin{eqnarray}
 	h(2 \tilde{x}, y) & = & w_*(2\tilde{x}) - w_*(y) - (1-s) w_*(2 \tilde{x} -y) \mbox{ for } \tilde{x}\leq y \leq 2 \tilde{x}, \qquad\label{c1-1-1-gen} \\
	h(x,x) & = & -(1-s) w_* (0), \label{c1-1-2-gen} \\
	h(x,\frac{x}{2}) & = & w_*(x) - (2-s) w_*(\frac{x}{2}). \label{c1-1-3-gen}
\end{eqnarray}
The function \eqref{c1-1-2-gen} is trivially nonpositive for all $s\in (0,1]$ due to \ref{A5}, which implies \eqref{ws-20-gen}. Considering the function \eqref{c1-1-1-gen} and taking account of the mean value theorem we obtain
\[ w'_*(\xi) \frac{2\tilde{x} -y}{w_*(2\tilde{x}-y)} \leq 1-s, \]
where $\xi \in (y,2\tilde{x})$. A change of variables yields
\begin{equation} \label{Ineq1}
	\frac{z}{w_*(z)} w'_*(\xi) \leq 1-s
\end{equation}
with $0\leq z \leq \tilde{x}$ and $\xi \in (2\tilde{x} -z, 2 \tilde{x})$. By \ref{A6}, \ref{A7} and \ref{A8} it follows
\[ \frac{z}{w_*(z)} w'_*(\xi) \leq \frac{\tilde{x}}{w_*(\tilde{x})} w'_*(\xi) \leq \frac{\tilde{x}}{w_*(\tilde{x})} w'_*(\tilde{x}) \leq C <1. \]
Thus, \eqref{Ineq1} is true and we can put $s:= 1-C>0$. \\
It is left to show that the function \eqref{c1-1-3-gen} is nonpositive. Similar arguments as before give that $h(x, \frac{x}{2}) \leq 0$ if 
\[  \frac{\frac{x}{2}}{w_*(\frac{x}{2})} w'_*(\xi) \leq (1-s), \]
where $\xi\in (\frac{x}{2},x)$. Due to \ref{A6}, \ref{A7} and \ref{A8} there exists a positive constant $0<C<1$ such that 
\[  \frac{\frac{x}{2}}{w_*(\frac{x}{2})} w'_*(\xi) \leq \frac{\frac{x}{2}}{w_*(\frac{x}{2})} w'_*(\frac{x}{2}) \leq C \]
because $\frac{x}{2}\geq \tilde{x}$. Again we put $s:= 1-C>0$. \\
{\em Step 2.} Suppose $0\leq 2y\leq x$, i.e., $\min(w_*(y), w_*(x-y)) = w_*(y)$. Note that it immediately follows $0 \leq y \leq \frac{x}{2}$. Therefore we consider the function 
\[ h(x,y) := w_*(x) - w_*(x-y) -(1-s)w_*(y). \]
By basic extreme value computations we obtain
 \begin{eqnarray*}
 	w'_*(x) - w'_* (x-y) & = & 0, \\
	w'_*(x-y) - (1-s) w'_*(y) & = & 0,
 \end{eqnarray*}
 which yields $w'_*(x) = w'_*(x-y)$ for all $x,y$ satisfying the assumptions. Due to the assumptions we also know that $x-y\geq \frac{x}{2} \geq \tilde{x}$ and the monotone behavior for $w'_*$ gives $y=0$, see \ref{A6}. Taking account of \ref{A2} we obtain $w'_*(x)=0$ which is a contradiction to \ref{A3}. Thus, $h$ attains its maximum on the boundary of the described domain, that is, $x=2\tilde{x}$, $y=0$, $y=\frac{x}{2}$. Hence, we need to consider
 \begin{eqnarray}
 	h(2 \tilde{x}, y) & = & w_*(2\tilde{x}) - w_*(2\tilde{x} - y) - (1-s) w_*(y) \mbox{ for } 0 \leq y \leq \tilde{x}, \qquad\label{c1-2-1-gen} \\
	h(x,0) & = & -(1-s) w_* (0), \label{c1-2-2-gen} \\
	h(x,\frac{x}{2}) & = & w_*(x) - (2-s) w_*(\frac{x}{2}). \label{c1-2-3-gen}
\end{eqnarray}
The functions \eqref{c1-2-2-gen} and \eqref{c1-2-3-gen} were already considered in Step 1. It remains to show that the function \eqref{c1-2-1-gen} is nonpositive. Following the same arguments as in Step 1 we get $h(2 \tilde{x}, y) \leq 0$ if 
\[ \frac{y}{w_*(y)} w'_*(\xi) \leq 1-s, \]
where $y\in [0,\tilde{x}]$ and $\xi \in (2\tilde{x} -y, 2\tilde{x})$. Due to \ref{A6}, \ref{A7} and \ref{A8} it follows
\[ \frac{y}{w_*(y)} w'_*(\xi) \leq \frac{\tilde{x}}{w_*(\tilde{x})} w'_*(\xi) \leq \frac{\tilde{x}}{w_*(\tilde{x})} w'_*(\tilde{x}) \leq C <1 \]
and we put $s:= 1-C>0$. \\
The proof is complete.
\end{proof}

Based on Lemma \ref{numerGeneral} we are now able to formulate the main result of this section.

\begin{thm} \label{algebraUltraGen}
Let $1\leq p_1,p_2,q \leq \infty$ and $w_* \in \mathcal{W}(\R)$. Define $p$ by $\frac 1p := \frac{1}{p_1}+\frac{1}{p_2}$.
Assume that $f\in\mathcal{M}^{w_*}_{p_1,q}(\R^n)$ and $g\in\mathcal{M}^{w_*}_{p_2,q}(\R^n)$. Then $f\cdot g\in\mathcal{M}^{w_*}_{p,q}(\R^n)$ 
and it holds
\[ \|f \, g\|_{\mathcal{M}^{w_*}_{p,q}} \leq C \|f\|_{\mathcal{M}^{w_*}_{p_1,q}} \|g\|_{\mathcal{M}^{w_*}_{p_2,q}} \]
with a positive constant $C$ which only depends on the choice of the frequency-uniform decomposition, 
the dimension $n$, the parameter $q$ and the weight $w_*$.
\\
In particular, the modulation space $\mathcal{M}^{w_*}_{p,q}(\R^n)$ is an algebra under pointwise multiplication.
\end{thm}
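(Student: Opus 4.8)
\emph{Proof plan.} The plan is to bound each frequency block $\Box_m(fg)$ separately and then to control the weighted $\ell^q$-norm by a bilinear estimate on sequence spaces into which the loss inequality of Lemma \ref{numerGeneral} feeds directly. First I would expand $fg=\sum_{k,l\in\Z^n}\Box_k f\,\Box_l g$ and apply $\Box_m$. Since $\widehat{\Box_k f}$ and $\widehat{\Box_l g}$ are supported in $Q_k$ and $Q_l$, the product $\Box_k f\,\Box_l g$ has Fourier support in $Q_k+Q_l$, a cube centred at $k+l$ of fixed side length; hence $\Box_m(\Box_k f\,\Box_l g)=0$ unless $|m-(k+l)|_\infty\le c$ for some $c=c(n)$. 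Invoking the uniform $L^p$-boundedness of $\Box_m$ (Young's inequality together with the modulation invariance of $\|\F^{-1}\sigma_0\|_{L^1}$) and Hölder's inequality for $\tfrac1p=\tfrac1{p_1}+\tfrac1{p_2}$, I would arrive at
\[ \|\Box_m(fg)\|_{L^p}\ \lesssim\!\!\sum_{|m-(k+l)|_\infty\le c}\!\!\|\Box_k f\|_{L^{p_1}}\,\|\Box_l g\|_{L^{p_2}}. \]

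Next I would insert the weight. Put $\alpha_k:=e^{w_*(|k|)}\|\Box_k f\|_{L^{p_1}}$ and $\beta_l:=e^{w_*(|l|)}\|\Box_l g\|_{L^{p_2}}$, so that $\|\alpha\|_{\ell^q}=\|f\|_{\mathcal{M}^{w_*}_{p_1,q}}$ and $\|\beta\|_{\ell^q}=\|g\|_{\mathcal{M}^{w_*}_{p_2,q}}$. On every nonzero term $|m|\le|k|+|l|+c'$, and since $w_*$ is increasing and regularly varying of index $<1$ this shift costs only a bounded constant, $w_*(|m|)\le w_*(|k|+|l|)+C$. For the pairs with $|k|+|l|\ge 2\tilde x$ I would apply Lemma \ref{numerGeneral} with $x=|k|+|l|$ and $y=|k|$, obtaining
\[ e^{w_*(|m|)}\ \lesssim\ e^{w_*(|k|)}\,e^{w_*(|l|)}\,e^{-s\min(w_*(|k|),w_*(|l|))}. \]
The finitely many pairs with $|k|+|l|<2\tilde x$ (outside the domain of the lemma) produce only boundedly many terms and are absorbed directly.

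It then remains to estimate the $\ell^q_m$-norm of the bilinear form
\[ T(\alpha,\beta)_m:=\!\!\sum_{|m-(k+l)|_\infty\le c}\!\! e^{-s\min(w_*(|k|),w_*(|l|))}\,\alpha_k\,\beta_l. \]
Using $e^{-s\min(w_*(|k|),w_*(|l|))}\le e^{-sw_*(|k|)}+e^{-sw_*(|l|)}$, I would split $T=T_1+T_2$. In $T_1$ the sum is the convolution of $\gamma_k:=e^{-sw_*(|k|)}\alpha_k$ with $\beta$ against a finitely supported kernel, so Young's inequality gives $\|T_1(\alpha,\beta)\|_{\ell^q}\lesssim\|\gamma\|_{\ell^1}\|\beta\|_{\ell^q}$. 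Hölder in $k$ then yields $\|\gamma\|_{\ell^1}\le\big(\sum_k e^{-sq'w_*(|k|)}\big)^{1/q'}\|\alpha\|_{\ell^q}$ (with the usual modification for $q=1$), and the series converges since \ref{A4} forces $e^{-sw_*(|k|)}$ to decay faster than any power of $|k|$. The term $T_2$ is symmetric, whence $\|T(\alpha,\beta)\|_{\ell^q}\lesssim\|\alpha\|_{\ell^q}\|\beta\|_{\ell^q}$, which is exactly the asserted inequality. Finally, choosing $p_1=p_2=p$ gives $fg\in\mathcal{M}^{w_*}_{p/2,q}\hookrightarrow\mathcal{M}^{w_*}_{p,q}$ by Lemma \ref{einbettung}, so $\mathcal{M}^{w_*}_{p,q}$ is a multiplication algebra.

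The step I expect to be the crux is the passage from the pointwise weight inequality of Lemma \ref{numerGeneral} to a bounded bilinear map on $\ell^q$: it is precisely the loss term $-s\min(w_*(|k|),w_*(|l|))$, with $s>0$, that forces one of the two sequences into $\ell^1$ after Hölder, while condition \ref{A4} (the superpolynomial, i.e.\ ultradifferentiable, growth of the weight) guarantees convergence of the auxiliary series $\sum_k e^{-sq'w_*(|k|)}$. Were either the strict gain $s>0$ or the growth \ref{A4} dropped, this summation — and hence the algebra property — would break down.
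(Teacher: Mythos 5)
Your proposal is correct and follows essentially the same route as the paper's proof: the same frequency-block expansion with finite overlap $|m-(k+l)|_\infty\le c$ (the paper's shift parameter $t$), the same insertion of Lemma \ref{numerGeneral} to produce the gain $e^{-s\min(w_*(|k|),w_*(|l|))}$ (your application at $x=|k|+|l|$, $y=|k|$ is legitimate and is what the paper does via monotonicity of $w_*$), and the same H\"older/Young argument on $\ell^q$ resting on the convergence of $\sum_{k} e^{-sq'w_*(|k|)}$ guaranteed by \ref{A4}; your splitting $T=T_1+T_2$ is the paper's $S_{1,t}+S_{2,t}$ in convolution packaging. Two small repairs: the interchange $\Box_m(fg)=\sum_{k,l}\Box_m(\Box_k f\,\Box_l g)$ needs the justification of the paper's Step 1 (convergence of the double series in $\S'(\R^n)$ plus the Fatou property of Lemma \ref{basic}(iii) applied to the partial sums), and in your final algebra step the order is backwards for $1\le p<2$, since $\mathcal{M}^{w_*}_{p/2,q}(\R^n)$ then falls outside the admissible range of the definition --- instead embed first, $\mathcal{M}^{w_*}_{p,q}(\R^n)\hookrightarrow\mathcal{M}^{w_*}_{2p,q}(\R^n)$ by Lemma \ref{einbettung}, and apply your bilinear estimate with $p_1=p_2=2p$, exactly as in the paper's Step 3.
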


\begin{proof}
We can basically follow the proof of Theorem 2.15 and Corollary 2.17 in \cite{rrs}. \\
{\em Step 1.} Subsequently we will use the notations $f_j (x)= \Box_j f(x)$ and $g_l (x)= \Box_l g(x)$ for $j,l\in\Z^n$.
A formal representation of the product $f\cdot g$ is given by 
\[ f\cdot g = \sum_{j,l\in \Z^n} f_j \cdot  g_l \, . \]
H\"older's inequality yields
\begin{eqnarray*}
\Big|\sum_{j,l\in \Z^n} f_j \cdot g_l\, \Big| & \le & \Big(\sum_{j\in \Z^n} \|\, f_j\, \|_{L^\infty}^2\Big)^{1/2} 
\Big(\sum_{l \in \Z^n} \| \, g_l\, \|_{L^\infty}^2 \Big)^{1/2} \\
	& \le & C \, \|\,f\, \|_{\mathcal{M}^{w_*}_{\infty,\infty}} \|\,g\, \|_{\mathcal{M}^{w_*}_{\infty,\infty}}
\end{eqnarray*}
with a constant $C$ independent of $f$ and $g$. This shows convergence of $\sum_{j,l\in \Z^n} f_j \cdot g_l$ in $\S'(\R^n)$. In view of Lemma \ref{basic} (iii) it will be sufficient to prove that the sequence $( \sum_{|j|,|l| < N} f_j \cdot g_l)_N$ is uniformly bounded in $\mathcal{M}^{w_*}_{p,q} (\R^n)$. \\
{\em Step 2.} Support properties and an application of Young's and H\"older's inequality lead to
\begin{eqnarray*}
 	\Big( \sum_{k\in\Z^n} e^{w_*(|k|) q} && \hspace{-0.7cm} \|\F^{-1} \big(\sigma_k \F (f\cdot g) \big)\|_{L^p}^q \Big)^{\frac{1}{q}} \nonumber \\
		& \lesssim & \max_{\substack{t \in \Z^n, \\ -3<t_i < 3, \\ i=1,\ldots, n}} \left( \sum_{k\in\Z^n} e^{w_*(|k|)q} \left[ \sum_{l\in\Z^n} \| f_{t-(l-k)} \|_{L^{p_1}} \| g_l \|_{L^{p_2}} \right]^q \right)^{\frac{1}{q}}. \hspace{1cm}
\end{eqnarray*}
Taking account of Lemma \ref{numerGeneral} we continue the estimate as follows:
\begin{eqnarray*}
& & \hspace{-1cm} \max_{\substack{t \in \Z^n, \\ -3<t_i < 3, \\ i=1,\ldots, n}} \bigg( \sum_{k\in\Z^n} e^{ w_*(|k|) q} \Big[ \sum_{l\in\Z^n} \|f_{t-(l-k)}\|_{L^{p_1}} \|g_l\|_{L^{p_2}} \Big]^q \bigg)^{\frac{1}{q}} \hspace{4.5cm}\\
	& \leq & \max_{\substack{t \in \Z^n, \\ -3<t_i < 3, \\ i=1,\ldots, n}} \bigg( \sum_{\substack{k\in \Z^n, \\ |k|< 2\tilde{x}}} \Big[ \sum_{l \in \Z^n} e^{w_*(|k|)} \|f_{t-(l-k)}\|_{L^{p_1}} \|g_l\|_{L^{p_2}} \Big]^q \\
	& & \qquad \qquad \qquad + \sum_{\substack{k\in \Z^n, \\ |k| \geq 2\tilde{x}}} \Big[ \sum_{l \in \Z^n} e^{w_*(|k|)} \|f_{t-(l-k)}\|_{L^{p_1}} \|g_l\|_{L^{p_2}} \Big]^q \bigg)^{\frac{1}{q}} 
\end{eqnarray*}
\begin{eqnarray*}
	& \leq &  e^{w_*(2\tilde{x})}  \, \max_{\substack{t \in \Z^n, \\ -3<t_i < 3, \\ i=1,\ldots, n}} \bigg( \sum_{\substack{k\in \Z^n, \\ |k|< 2\tilde{x}}} \Big[ \sum_{l \in \Z^n} \|f_{t-(l-k)}\|_{L^{p_1}} \|g_l\|_{L^{p_2}} \Big]^q \\
	& & \qquad \qquad \qquad + \sum_{\substack{k\in \Z^n, \\ |k| \geq 2\tilde{x}}} \Big[ \sum_{\substack{l \in \Z^n, \\ |l|\leq |l-k|}} e^{w_*(|l-k|)} \|f_{t-(l-k)}\|_{L^{p_1}} e^{w_*(|l|)} \|g_l\|_{L^{p_2}} e^{-s w_*(|l|)} \\
	& & \qquad \qquad \qquad + \sum_{\substack{l \in \Z^n, \\ |l-k|\leq |l|}} e^{w_*(|l-k|)} \|f_{t-(l-k)}\|_{L^{p_1}} e^{w_*(|l|)} \|g_l\|_{L^{p_2}} e^{-sw_*(|l-k|)} \Big]^q \bigg)^{\frac{1}{q}} \\
	& \lesssim & \max_{\substack{t \in \Z^n, \\ -3<t_i < 3, \\ i=1,\ldots, n}} \Bigg\{ \bigg(\sum_{k\in \Z^n} \Big[ \sum_{\substack{l \in \Z^n, \\ |l|\leq |l-k|}} e^{w_*(|l-k|)} \|f_{t-(l-k)}\|_{L^{p_1}} e^{w_*(|l|)} \|g_l\|_{L^{p_2}} e^{-s w_*(|l|)} \Big]^q \bigg)^{\frac{1}{q}} \\
	& & \qquad \qquad \qquad + \bigg( \sum_{k\in\Z^n} \Big[ \sum_{\substack{l \in \Z^n, \\ |l-k|\leq |l|}} e^{w_*(|l-k|)} \|f_{t-(l-k)}\|_{L^{p_1}} e^{w_*(|l|)} \|g_l\|_{L^{p_2}} e^{-sw_*(|l-k|)} \Big]^q \bigg)^{\frac{1}{q}} \Bigg\} \\
	& =: & \max_{\substack{t \in \Z^n, \\ -3<t_i < 3, \\ i=1,\ldots, n}} \big\{  S_{1,t} + S_{2,t} \big\}
\end{eqnarray*}
with $s$ and $\tilde{x}$ as in Lemma \ref{numerGeneral}. Note that \ref{A5} implies $w_* (|l-k|) + w_* (|l|) - s \, \min (w_* (|l|), w_* (|l-k|)) \geq 1$. A change of variables $j = l-k$ together with H\"older's inequality yields the following estimate:
 \begin{eqnarray*}
	S_{1,t} & = & \bigg(\sum_{k\in \Z^n} \Big[ \sum_{\substack{j \in \Z^n, \\ |j+k|\leq |j|}} e^{w_*(|j|)} \|f_{t-j}\|_{L^{p_1}} e^{w_*(|j+k|)} \|g_{j+k}\|_{L^{p_2}} e^{-s w_*(|j+k|)} \Big]^q \bigg)^{\frac{1}{q}} \\
		& \leq & \bigg(\sum_{k\in \Z^n} \sum_{\substack{j \in \Z^n, \\ |j+k|\leq |j|}} e^{q w_*(|j|)} \|f_{t-j}\|^q_{L^{p_1}} e^{q w_*(|j+k|)} \|g_{j+k}\|^q_{L^{p_2}} \Big( \sum_{\substack{j \in \Z^n, \\ |j+k|\leq |j|}} e^{-s q' w_*(|j+k|)} \Big)^{\frac{q}{q'}} \bigg)^{\frac{1}{q}} \\	
	& \lesssim & \bigg(\sum_{k\in \Z^n} \sum_{\substack{j \in \Z^n, \\ |j+k|\leq |j|}} e^{q w_*(|j|)} \|f_{t-j}\|^q_{L^{p_1}} e^{q w_*(|j+k|) } \|g_{j+k}\|^q_{L^{p_2}} \bigg)^{\frac{1}{q}} \\
		& \leq & \bigg( \sum_{j \in \Z^n} e^{q w_*(|j|)} \|f_{t-j}\|^q_{L^{p_1}} \sum_{k\in \Z^n} e^{q w_*(|j+k|)} \|g_{j+k}\|^q_{L^{p_2}} \bigg)^{\frac{1}{q}},
\end{eqnarray*}
where we used the fact 
	\begin{equation} \label{SeriesConv}
		\Big( \sum_{m \in\Z^n}  e^{-s q' w_* (|m|)} \Big)^{\frac{1}{q'}} <\infty 
	\end{equation}
due to \ref{A4} and $\frac{1}{q} + \frac{1}{q'} = 1$. By using the mean-value theorem, the uniform boundedness and the positivity of $w_*'$, see \ref{A1}, \ref{A2}, \ref{A6} and \ref{A3}, we get
\[ e^{w_* (|j|) - w_* (|t-j|)} = e^{w_*' (\xi)(|j|-|j-t|)}\le e^{\|\, w_*' \|_{L^\infty}\, |t|} \]
for some $\xi \in \R$. Hence,
	\[ \max_{\substack{t \in \Z^n, \\ -3<t_i < 3, \\ i=1,\ldots, n}} \sup_{j\in\Z^n} e^{w_* (|j|) - w_* (|t-j|)} \leq e^{\|w'_*\|_{L^\infty} \, 2\sqrt{n}} < \infty \, . \]
Thus, we obtain
\[ \max_{\substack{t \in \Z^n, \\ -3<t_i < 3, \\ i=1,\ldots, n}} S_{1,t} \lesssim \| f\|_{\mathcal{M}^{w_*}_{p_1,q}} \| g \|_{\mathcal{M}^{w_*}_{p_2,q}}  \]
and analogously
\[ \max_{\substack{t \in \Z^n, \\ -3<t_i < 3, \\ i=1,\ldots, n}} S_{2,t} \lesssim \| f\|_{\mathcal{M}^{w_*}_{p_1,q}}  \| g\|_{\mathcal{M}^{w_*}_{p_2,q}}. \]
Note that all constants are independent of $f$ and $g$. \\
{\em Step 3.} By applying Step 2 with $p_1=p_2 =2p$  and stressing Lemma \ref{einbettung} we prove the algebra property of $\mathcal{M}^{w_*}_{p,q}(\R^n)$, which completes the proof.
\end{proof}
	\section{Non-analytic Superposition Operators} \label{NonAnalSup}	
	
We adopt the strategy of \cite{rrs} in order to obtain a non-analytic superposition result on the modulation spaces $\mathcal{M}^{w_*}_{p,q}(\R^n)$. Note that this strategy got introduced in \cite{brs}. \\
Propositions \ref{UltraCharFunc} and \ref{UltraCharSeq} have already shown that we can distinguish two subclasses of functions belonging to the class $\mathcal{W}(\R)$. Therefore we divide $\mathcal{W}(\R)$ into:
\begin{eqnarray*}
	\mathcal{W}_1 (\R) & := & \{ f \in \mathcal{W} (\R): f \text{ is regularly varying of index } \alpha \in (0,1) \} \, , \\
	\mathcal{W}_0 (\R) & := & \{ f \in \mathcal{W} (\R): f \text{ is slowly varying } \} \, .
\end{eqnarray*}
Obviously it holds $\mathcal{W}(\R) = \mathcal{W}_0 (\R) \cup \mathcal{W}_1 (\R)$. Remark that if $w_* \in \mathcal{W}_1 (\R)$, then there exists a slowly varying function $\widetilde{w}_*$ such that 
\begin{equation} \label{ws-51}
	w_* (x) = x^\alpha \widetilde{w}_*(x) 
\end{equation}
due to the Characterisation Theorem, see \cite[Theorem 1.4.1]{bgt}. Since $w_* \in C^\infty (0, \infty)$, see \ref{A1}, it follows that $\widetilde{w}_* \in C^\infty (0,\infty)$. Hence, either $\widetilde{w}_* (t)$ is bounded for all $t \in (0,\infty)$ or tends to infinity as $t \to \infty$. Note that because of \ref{A5} and \ref{A3} we only consider positive weight functions $w_*$. \\
At first we show the subalgebra property for modulation spaces $\mathcal{M}^{w_*}_{p,q} (\R^n)$ which is an important tool to prove the main result of this section. Therefore we decompose the phase space in $(2^n+1)$ parts. Let $R>0$ and $\epsilon=(\epsilon_1,\ldots, \epsilon_n)$ be fixed with $\epsilon_j \in \{0,1\}$, $j=1,\ldots,n$. Then we put
\[ P_R := \{ \xi\in\R^n :\: |\xi_j|\leq R, \: j=1,\ldots, n \} \]
and
\[ P_R(\epsilon) := \{ \xi\in \R^n: \: \sgn (\xi_j) = (-1)^{\epsilon_j}, \: j=1,\ldots,n \} \setminus P_R. \]

\subsection{The Class $\mathcal{W}_1 (\R)$ of Weight Functions}

\begin{prop} \label{subalgebraUltraGenRV}
Let $1\leq p, q \leq \infty$.  Suppose that $\epsilon=(\epsilon_1,\ldots, \epsilon_n)$ is fixed with 
$\epsilon_j \in \{0,1\}$, $j=1,\ldots,n$. Let $w_*\in \mathcal{W}_1 (\R)$ be the fixed weight with the representation \eqref{ws-51} and let $R\geq 2$.
The spaces
\[ \mathcal{M}^{w_*}_{p,q}(\epsilon, R) := \{ f\in \mathcal{M}^{w_*}_{p,q}(\R^n): \supp \F(f) \subset P_R(\epsilon) \} \]
are subalgebras of $\mathcal{M}^{w_*}_{p,q} (\R^n)$. Furthermore, it holds
\begin{equation}\label{ws-29-gen}
\|f \, g\|_{\mathcal{M}^{w_*}_{p,q}} \leq D_R \, \|f\|_{\mathcal{M}^{w_*}_{p,q}} \|g\|_{\mathcal{M}^{w_*}_{p,q}} 
\end{equation}
for all $f,g \in\mathcal{M}^{w_*}_{p,q}(\epsilon, R)$. The constant $D_R$ can be specified in the following way: \\
\begin{itemize}
\item[(a)] Suppose $\liminf_{t\to \infty} \widetilde{w}_* (t) = C_0 >0$, where $C_0 = \infty$ is allowed. Then $D_R$ is given by
	\[ D_R := C \bigg( \int_{ sq' c (R-2)^{\alpha}}^\infty y^{n/\alpha-1} e^{-y} \, dy \bigg)^{\frac{1}{q'}}, \]
where the constant $C > 0 $ depends only on $n,p,q$ and  $w_*$, but not on $f,g$ and $R$.  Moreover, $c := \min(1,C_0)$. \\
\item[(b)]  Let $\delta >0$ be such that $\alpha - \delta >0$. If $\liminf_{t\to \infty} \widetilde{w}_* (t) = 0$, then $D_R$ is given by
	\[ D_R := C \bigg( \int_{s q'  (R-2)^{\alpha-\delta}}^\infty y^{n/\alpha -1} e^{-y} \, dy \bigg)^{\frac{1}{q'}} \, , \]
where the constant $C > 0 $ depends only on $n,p,q$ and  $w_*$, but not on $f,g$ and $R$.
\end{itemize}
\end{prop}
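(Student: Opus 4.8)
The plan is to follow the three-step architecture of the proof of Theorem~\ref{algebraUltraGen}, but to exploit the extra Fourier-support restriction to the single quadrant $P_R(\epsilon)$ in order to turn the fixed algebra constant into a tail quantity $D_R$ that decays as $R\to\infty$. First I would reduce, via the Fatou property (Lemma~\ref{basic}(iii)) together with the $\S'$-convergence of $\sum_{j,l}f_j g_l$ as in Step~1 of Theorem~\ref{algebraUltraGen}, to a uniform bound on the partial sums, recording the formal representation $f\cdot g=\sum_{j,l\in\Z^n}f_j g_l$ with $f_j=\Box_j f$, $g_l=\Box_l g$. At the level of single blocks I would use Nikol'skij's inequality (the block $f_j$ has Fourier support in a fixed cube, so $\|f_j\|_{L^\infty}\lesssim\|f_j\|_{L^p}$) to write $\|f_j g_l\|_{L^p}\leq\|f_j\|_{L^\infty}\|g_l\|_{L^p}\lesssim\|f_j\|_{L^p}\|g_l\|_{L^p}$; this is precisely what lets me retain the \emph{same} integrability exponent $p$ for both factors and the product, in contrast to the relation $1/p=1/p_1+1/p_2$ of Theorem~\ref{algebraUltraGen}.

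The decisive new ingredient is geometric. Since $\supp\F f,\supp\F g\subset P_R(\epsilon)$, only indices $j,l$ lying in the fixed quadrant contribute, and each such index has a coordinate of modulus $>R-1$, so that $|j|,|l|\geq R-2$. Moreover, frequencies in a common quadrant add coherently: if $\Box_k(f_j g_l)\neq0$ then $k$ lies in the same quadrant and $|k_i|\geq\max(|j_i|,|l_i|)$ componentwise, whence $|k|\geq\max(|j|,|l|)\geq R-2$. For $R$ large relative to $\tilde x$ from \eqref{Xstart} this places every contributing term in the range where Lemma~\ref{numerGeneral} applies (the finitely many terms with $|k|<2\tilde x$, present only for small $R$, are absorbed into the constant through the factor $e^{w_*(2\tilde x)}$ exactly as in Theorem~\ref{algebraUltraGen}). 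Lemma~\ref{numerGeneral} then gives
\[
 e^{w_*(|k|)}\;\leq\; e^{w_*(|j|)}\,e^{w_*(|l|)}\,e^{-s\,\min(w_*(|j|),w_*(|l|))}.
\]

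Next I would insert this bound into the weighted $\ell^q$-sum defining $\|fg\|_{\mathcal M^{w_*}_{p,q}}$, split according to whether $|j|\leq|l|$ or $|l|\leq|j|$, and place the gain $e^{-s\min(\cdots)}$ on the summation variable ranging over the larger index. After the change of variables $j\mapsto l-k$ and Hölder's inequality, exactly as for $S_{1,t}$ and $S_{2,t}$ in Theorem~\ref{algebraUltraGen}, the two factors separate: one factor reproduces $\|f\|_{\mathcal M^{w_*}_{p,q}}\|g\|_{\mathcal M^{w_*}_{p,q}}$ (using the translation bound $e^{w_*(|j|)-w_*(|t-j|)}\leq e^{\|w_*'\|_{L^\infty}2\sqrt n}$ and the maximum over the finite range of $t$), while the other is the tail sum
\[
 \Big(\sum_{|m|\geq R-2} e^{-sq'\,w_*(|m|)}\Big)^{1/q'},
\]
which is precisely where the dependence on $R$ is recorded, the unrestricted constant \eqref{SeriesConv} being replaced by a genuine tail.

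Finally I would estimate this tail by the integral $\int_{|\xi|\geq R-2}e^{-sq'w_*(|\xi|)}\,d\xi$ and pass to polar coordinates, reducing to $\int_{R-2}^\infty r^{n-1}e^{-sq'w_*(r)}\,dr$. Here the representation \eqref{ws-51}, $w_*(r)=r^\alpha\widetilde w_*(r)$, bounds the exponent from below: in case~(a), $\liminf\widetilde w_*\geq C_0>0$ yields $w_*(r)\geq c\,r^\alpha$ with $c=\min(1,C_0)$ on the relevant range, and the substitution $y=sq'c\,r^\alpha$ converts the integral into the stated one with lower limit $sq'c(R-2)^\alpha$; in case~(b), although $\widetilde w_*\to0$, slow variation still forces $w_*(r)\geq r^{\alpha-\delta}$ eventually, and the analogous substitution produces the lower limit $sq'(R-2)^{\alpha-\delta}$. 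Raising to the power $1/q'$ gives $D_R$ and hence \eqref{ws-29-gen}, while the subalgebra closure follows from $\supp\F(fg)\subset P_R(\epsilon)+P_R(\epsilon)\subset P_R(\epsilon)$. I expect the last paragraph to be the main obstacle: deriving the \emph{uniform} lower bounds $w_*(r)\geq c\,r^\alpha$, respectively $w_*(r)\geq r^{\alpha-\delta}$, on the entire range $r\geq R-2$ from the Karamata representation (Theorem~\ref{karamata}), and arranging the change of variables so that the lower limits emerge \emph{exactly} as in the statement rather than merely up to constants.
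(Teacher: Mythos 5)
Your proposal is correct and takes essentially the same route as the paper: restrict the weighted norm to indices in the quadrant set $P_R^*(\epsilon)$ (all of modulus $>R-1$), run the $S_{1,t}$, $S_{2,t}$ machinery of Theorem~\ref{algebraUltraGen} with Lemma~\ref{numerGeneral}, and convert the H\"older tail $\bigl(\sum_{|m|\geq R-2}e^{-sq'w_*(|m|)}\bigr)^{1/q'}$ into the stated integrals via polar coordinates and the substitutions $t=r^\alpha$, $y=sq'ct$ (case (a), with $c=\min(1,C_0)$), resp.\ $t=r^{\alpha-\delta}$ (case (b)) --- and the uniform lower bounds $w_*(r)\geq c\,r^\alpha$, resp.\ $w_*(r)\geq r^{\alpha-\delta}$, which you flag as the main obstacle, are invoked with exactly the same brevity in the paper, any adjustment being absorbed into $C$. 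Two cosmetic deviations: the gain $e^{-s\min(w_*(|j|),w_*(|l|))}$ sits on the \emph{smaller} of the two indices, as your displayed tail sum correctly reflects (your prose says ``larger''), and the paper reaches the single-exponent statement via H\"older with $\frac 1p=\frac{1}{p_1}+\frac{1}{p_2}$ followed by Lemma~\ref{einbettung}, rather than your block-level Nikol'skij estimate, though both rest on the same ingredient.
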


\begin{proof}
For details see the proof of Proposition 3.1 in \cite{rrs}. Moreover, taking account of the Fourier supports of the functions $f$ and $g$ we proceed as in the proof of Theorem \ref{algebraUltraGen}. Let
 \[ P_R^* (\epsilon) := \{ k\in \Z^n : \max_{j=1,\ldots, n} | k_j | > R-1, \quad \sgn (k_j) = (-1)^{\epsilon_j} , j=1,\ldots, n \}. \]
Then we obtain
\begin{eqnarray*}
	 \Big( \sum_{k\in P_R^* (\epsilon) } e^{w_*(|k|) q} && \hspace{-0.7cm} \|\F^{-1} \big(\sigma_k \F (f\cdot g) \big)\|_{L^p}^q \Big)^{\frac{1}{q}} \nonumber \\
		& \lesssim & \max_{\substack{t \in \Z^n, \\ -3<t_i < 3, \\ i=1,\ldots, n}} \Bigg( \sum_{k\in P_R^* (\epsilon) } e^{w_*(|k|)q} \Big[ \sum_{\substack{l\in\Z^n, \\ l, t-(l-k) \in P_R^* (\epsilon) }} \| f_{t-(l-k)} \|_{L^{p_1}} \| g_l \|_{L^{p_2}} \Big]^q \Bigg)^{\frac{1}{q}} \\
		& \lesssim &  \max_{\substack{t \in \Z^n, \\ -3<t_i < 3, \\ i=1,\ldots, n}} \big\{  S_{1,t} + S_{2,t} \big\},
\end{eqnarray*}
where 
\begin{eqnarray*}
	S_{1,t} & \leq & \Bigg(\sum_{k\in P_R^* (\epsilon)} \sum_{\substack{ l \in \Z^n, \\ l, t-(l-k) \in P_R^* (\epsilon), \\ |l|\leq |l-k|}} e^{q w_*(|l-k|)} \|f_{t-(l-k)}\|^q_{L^{p_1}} e^{q w_*(|l|)} \|g_{l}\|^q_{L^{p_2}} \\
		& & \qquad \qquad \qquad \times \, \bigg( \sum_{\substack{l \in \Z^n, \\ l, t-(l-k) \in P_R^* (\epsilon), \\ |l|\leq |l-k|}} e^{-s q' w_*(|l|)} \bigg)^{\frac{q}{q'}} \Bigg)^{\frac{1}{q}}, \\
	S_{2,t} & \leq & \Bigg(\sum_{k\in P_R^* (\epsilon)} \sum_{\substack{ l \in \Z^n, \\ l, t-(l-k) \in P_R^* (\epsilon), \\ |l-k|\leq |l|}} e^{q w_*(|l-k|)} \|f_{t-(l-k)}\|^q_{L^{p_1}} e^{q w_*(|l|)} \|g_{l}\|^q_{L^{p_2}} \\
		& & \qquad \qquad \qquad \times \, \bigg( \sum_{\substack{l \in \Z^n, \\ l, t-(l-k) \in P_R^* (\epsilon), \\ |l-k|\leq |l|}} e^{-s q' w_*(|l-k|)} \bigg)^{\frac{q}{q'}} \Bigg)^{\frac{1}{q}}.
\end{eqnarray*}
In the following estimates let us consider the set $P_R^* (\epsilon)$ for $\epsilon = (0, \ldots, 0)$. The other cases $\epsilon \neq (0, \ldots, 0)$ can be deduced from a symmetry argument. Now we distinguish two cases depending on $\widetilde{w}_*$. \\
{\em Step 1.} Suppose $\liminf_{t\to \infty} \widetilde{w}_* (t) =: C_0 >0$, where $C_0 = \infty$ is allowed. With $R\geq 2$ and employing equality \eqref{ws-51} we estimate 
\begin{eqnarray*}
	\sum_{\substack{l \in \Z^n, \\ l, t-(l-k) \in P_R^* (0,\ldots, 0), \\ |l|\leq |l-k|}} e^{-s q' w_*(|l|)} & \leq & \sum_{\substack{ l\ \in\Z^n, \\ \max_{j=1,\ldots,n} |l_j| > R-1}}  e^{-s q' w_*(|l|)} \\
		& \leq & \int_{|x|>R-2} e^{-sq' |x|^\alpha \widetilde{w}_*(|x|)} \, dx \hspace{4cm}
\end{eqnarray*}
\begin{eqnarray*}
		& \leq &  2\, \frac{\pi^{n/2}}{\Gamma (n/2)}\,  \int_{R-2}^\infty r^{n-1} e^{-s q' c r^{\alpha}} \, dr \\
		& \stackrel{[t=r^\alpha]}{=} & 2\, \frac{\pi^{n/2}}{\Gamma (n/2)}\, \frac{1}{\alpha} \, \int_{(R-2)^{\alpha}}^\infty t^{n/\alpha -1} e^{-s q' c t} \, dt \\
		& \stackrel{[y=s q' c t]}{=} & 2\, \frac{\pi^{n/2}}{\Gamma (n/2)}\, \frac{1}{\alpha}\,  (s q' c)^{-n/\alpha} \int_{s q'  c (R-2)^{\alpha}}^\infty y^{n/\alpha -1} e^{-y} \, dy \\ 
		& = : & E_R\, ,
\end{eqnarray*}
where $c := \min(1, C_0)$. \\
{\em Step 2.} Suppose $\liminf_{t\to \infty} \widetilde{w}_* (t) = 0$. With $R\geq 2$, $\delta>0$ arbitrarily small and employing equality \eqref{ws-51} we estimate 
\begin{eqnarray*}
	\sum_{\substack{l \in \Z^n, \\ l, t-(l-k) \in P_R^* (0,\ldots, 0), \\ |l|\leq |l-k|}} e^{-s q' w_*(|l|)} & \leq & \sum_{\substack{ l\ \in\Z^n, \\ \max_{j=1,\ldots,n} |l_j| > R-1}}  e^{-s q' w_*(|l|)} \\
		& \leq & \int_{|x|>R-2} e^{-sq' |x|^\alpha \widetilde{w}_*(|x|)} \, dx \\
		& \leq &  2\, \frac{\pi^{n/2}}{\Gamma (n/2)}\,  \int_{R-2}^\infty r^{n-1} e^{-s q' r^{\alpha-\delta}} \, dr \\
		& \stackrel{[t=r^{\alpha-\delta}]}{=} & 2\, \frac{\pi^{n/2}}{\Gamma (n/2)}\, \frac{1}{\alpha-\delta} \, \int_{(R-2)^{\alpha-\delta}}^\infty t^{n/ (\alpha-\delta) -1} e^{-s q' t} \, dt \\
		& \stackrel{[y=s q' t]}{=} & 2\, \frac{\pi^{n/2}}{\Gamma (n/2)}\, \frac{1}{\alpha-\delta}\,  (s q')^{-n/ (\alpha-\delta)} \int_{s q'  (R-2)^{\alpha-\delta}}^\infty y^{n/\alpha -1} e^{-y} \, dy \\ 
		& = : & F_R\, .
\end{eqnarray*}
In the last part of the proof the constant $D_R$ stands for $E_R$ and $F_R$, respectively. We follow the estimates used in the proof of Theorem \ref{algebraUltraGen} and obtain
\begin{eqnarray*}
	S_{1,t} & \leq & D_R^{1/q'} \Bigg(\sum_{j\in \Z^n} e^{q w_*(|j|)} \|f_{t-j}\|^q_{L^{p_1}} \sum_{k\in \Z^n} e^{q w_*(|j+k|)} \|g_{j+k}\|^q_{L^{p_2}} \Bigg)^{\frac{1}{q}} \\
		& \lesssim & D_R^{1/q'} \| f\|_{\mathcal{M}^{w_*}_{p_1,q}}  \| g\|_{\mathcal{M}^{w_*}_{p_2,q}}
\end{eqnarray*}
as well as 
\[ S_{2,t} \lesssim D_R^{1/q'} \| f\|_{\mathcal{M}^{w_*}_{p_1,q}}  \| g\|_{\mathcal{M}^{w_*}_{p_2,q}}. \]
Lemma \ref{einbettung} yields the desired result. 
\end{proof}
\begin{rem}
	An obvious application of Proposition \ref{subalgebraUltraGenRV}(a) is the subalgebra property of Gevrey-modulation spaces $\mathcal{GM}^s_{p,q} (\R^n)$, see Proposition 3.1 in \cite{rrs}. In this case we have the weight function $w_* (|k|) = \langle k \rangle^{\frac{1}{s}}$, $s>1$. Hence $\widetilde{w}_* (|k|) = 1$ in \eqref{ws-51}. \\
	Furthermore, Proposition \ref{subalgebraUltraGenRV}(a) shows that the spaces $\mathcal{M}^{w_*}_{p,q} (\R^n)$ are subalgebras under pointwise multiplication if $w_* (|k|) = \langle k \rangle^{\frac{1}{s}} l_m \langle k \rangle_*$, $s>1, m\in \N$, where $*$ stands for a sufficiently large number (depending on $m$) such that $l_m \langle k \rangle_* \geq 1$ for all $k\in \Z^n$. Here we have $\widetilde{w}_* (|k|) = l_m \langle k \rangle_*$ in \eqref{ws-51}. \\
	Let us consider the weight function $w_* (|k|) = \langle k \rangle^{\frac{1}{s}} \big(\log\langle k \rangle_e \big)^r$, $s>1$, $r<0$. Hence $\widetilde{w}_* (|k|) = \big(\log\langle k \rangle_e \big)^r$ in \eqref{ws-51}. Then Proposition \ref{subalgebraUltraGenRV}(b) is applicable for the space $\mathcal{M}^{w_*}_{p,q} (\R^n)$.
\end{rem}

Subsequently we assume every function to be real-valued. Let us mention a technical lemma stated in \cite{brs}. 
	\begin{lma} \label{lma45brs}
		Let $\beta>0$. Define 
		\[ f(t) := \int_{t}^\infty e^{-y} y^{\beta-1} \, dy, \qquad t\geq 0. \]
		The inverse $g$ of the function $f$ maps $(0,\Gamma(\beta)]$ onto $[0,\infty)$ and it holds
		\[ \lim_{u\downarrow 0} \frac{g(u)}{\log \frac{1}{u}} = 1. \]
	\end{lma}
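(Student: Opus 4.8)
The plan is to recognise $f$ as (a form of) the upper incomplete Gamma function and to extract its asymptotic behaviour as $t\to\infty$, which then translates directly into the claimed behaviour of the inverse $g$ near $0$.

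First I would settle the elementary structural claims. Differentiating under the integral sign gives $f'(t) = -e^{-t}t^{\beta-1}<0$ for $t>0$, so $f$ is strictly decreasing and hence injective on $[0,\infty)$. Since $f(0)=\int_0^\infty e^{-y}y^{\beta-1}\,dy = \Gamma(\beta)$ and $f(t)\to 0$ as $t\to\infty$ (the integrand being integrable on $(0,\infty)$), the map $f$ is a decreasing bijection from $[0,\infty)$ onto $(0,\Gamma(\beta)]$. Consequently its inverse $g$ is a well-defined decreasing bijection from $(0,\Gamma(\beta)]$ onto $[0,\infty)$, and in particular $g(u)\to\infty$ precisely as $u\downarrow 0$.

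The analytic heart of the proof is the asymptotic equivalence
\[ f(t) \sim t^{\beta-1}e^{-t}, \qquad t\to\infty. \]
I would obtain this by the substitution $y=t+v$, writing
\[ f(t) = e^{-t}t^{\beta-1}\int_0^\infty e^{-v}\Big(1+\tfrac{v}{t}\Big)^{\beta-1}\,dv, \]
and then passing to the limit $t\to\infty$ inside the integral. Since $(1+v/t)^{\beta-1}\to 1$ pointwise and is dominated, uniformly for $t\geq 1$, by $\max\big(1,(1+v)^{\beta-1}\big)$, which is $e^{-v}$-integrable on $(0,\infty)$, dominated convergence shows the remaining integral tends to $1$. (Integration by parts, combined with the bound $\int_t^\infty e^{-y}y^{\beta-2}\,dy \leq t^{-1}f(t)$ valid since $y^{\beta-2}\leq t^{-1}y^{\beta-1}$ for $y\geq t$, is an equally short alternative.)

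Finally I would convert this into a statement about $g$. Taking logarithms in $u=f(t)$ with $t=g(u)$ and using the equivalence above yields $\log\frac{1}{u} = t-(\beta-1)\log t + o(1)$ as $t\to\infty$. Dividing by $t=g(u)$ and invoking $\log t/t\to 0$ gives $\frac{1}{g(u)}\log\frac{1}{u}\to 1$, and therefore $g(u)/\log\frac{1}{u}\to 1$ as $u\downarrow 0$, which is the assertion. The only point requiring a little care is the dominated-convergence step establishing the asymptotic equivalence, in particular selecting a $t$-independent dominating function that works simultaneously for $\beta\geq 1$ and for $0<\beta<1$; once $f(t)\sim t^{\beta-1}e^{-t}$ is in hand, the passage to the logarithmic limit is routine, the term $(\beta-1)\log t$ being negligible against $t$ regardless of the sign of $\beta-1$.
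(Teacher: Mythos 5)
Your proof is correct, and it is worth noting that the paper itself contains no argument for this lemma at all: its ``proof'' is the single line ``Cf.\ Lemma 4.5 in \cite{brs}'', so your write-up functions as a self-contained replacement for that citation rather than an alternative to an argument given here. The substance of what you prove, $f(t)\sim t^{\beta-1}e^{-t}$ as $t\to\infty$, is the classical first-order asymptotic of the upper incomplete Gamma function $\Gamma(\beta,t)$, which is also what underlies the cited result. All the steps check out: $f'(t)=-e^{-t}t^{\beta-1}<0$ gives the decreasing bijection from $[0,\infty)$ onto $(0,\Gamma(\beta)]$ (the integrand is integrable at $0$ even for $0<\beta<1$, so $f(0)=\Gamma(\beta)$ is legitimate); in the substitution $y=t+v$ your dominating function is valid in both regimes, since for $\beta\geq 1$ and $t\geq 1$ one has $(1+v/t)^{\beta-1}\leq(1+v)^{\beta-1}$, while for $0<\beta<1$ the factor is $\leq 1$, so $e^{-v}\max\bigl(1,(1+v)^{\beta-1}\bigr)$ dominates uniformly and dominated convergence applies; the integration-by-parts alternative with the bound $y^{\beta-2}\leq t^{-1}y^{\beta-1}$ for $y\geq t$ is equally sound and avoids the case split entirely. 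The final passage, $\log\frac{1}{u}=t-(\beta-1)\log t+o(1)$ with $t=g(u)\to\infty$ as $u\downarrow 0$, correctly handles both signs of $\beta-1$ since $\log t/t\to 0$ regardless. No gaps.
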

	\begin{proof}
		Cf. Lemma 4.5. in \cite{brs}.
	\end{proof}
Now we formulate the following lemma which is the main tool for Theorem \ref{SuperpositionUltraGenRV}. For brevity we write $\| \cdot \|$ instead of $\| \cdot \|_{\mathcal{M}^{w_*}_{p,q}}$.

\begin{lma} \label{estSuperUltraGenRV}
Let $1<p <\infty$ and  $1\le q \le \infty$. Suppose $w_* \in \mathcal{W}_1 (\R)$ with the representation \eqref{ws-51} and $u \in \mathcal{M}^{w_*}_{p,q} (\R^n)$. 
\begin{itemize}
\item[(a)] If $\widetilde{w}_*$ is bounded, then it holds
\[ \| e^{iu}-1\| \leq c \, \|u\| \, 
\left\{
\begin{array}{lll}
 e^{ b \|u \|^\alpha \log \|u \| } &\qquad & \mbox{ if } \|u\| > 1, \\
1 &&  \mbox{ if } \|u\| \leq 1
\end{array}\right. 
\]
with constants $b,c >0$ independent of $u$.
\item[(b)] Suppose $\lim_{t\to \infty} \widetilde{w}_*(t) = \infty$. Then it holds
\[ \| e^{iu}-1\| \leq c \, \|u\| \, 
\left\{
\begin{array}{lll}
 e^{ b \|u \|^\alpha \log (\|u \|) \, \widetilde{w}_* ( a \| u \| (\log \|u \|)^{\frac{1}{\alpha}} ) } &\qquad & \mbox{ if } \|u\| > 1, \\
1 &&  \mbox{ if } \|u\| \leq 1
\end{array}\right. 
\]
with constants $a,b,c >0$ independent of $u$.
\end{itemize}
\end{lma}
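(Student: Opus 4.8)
The plan is to expand $e^{iu}-1$ as a power series and control each term using the subalgebra structure from Proposition \ref{subalgebraUltraGenRV}, rather than the global algebra of Theorem \ref{algebraUltraGen}. The decisive point is that for $f\in\mathcal{M}^{w_*}_{p,q}(\epsilon,R)$ the multiplicativity constant $D_R$ decays in $R$: a product of two functions whose Fourier supports live far from the origin costs a small factor. First I would split $u$ according to the frequency decomposition of the phase space into the $(2^n+1)$ pieces $P_R$ and $P_R(\epsilon)$ introduced above, writing $u = u_R + \sum_\epsilon u_\epsilon$, where $u_R$ has Fourier support in the bounded box $P_R$ and each $u_\epsilon$ lives in one ``octant'' $P_R(\epsilon)$. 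On the low-frequency part $u_R$ the weight $e^{w_*(|k|)}$ is bounded by $e^{w_*(\sqrt n R)}$, so $\mathcal{M}^{w_*}_{p,q}$ reduces essentially to an $L^p$-type algebra and the exponential $e^{iu_R}-1$ is harmless; the genuine growth comes only from the high-frequency octant pieces.

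\textbf{Summing the series.} For a single octant piece I would estimate $\|e^{iu_\epsilon}-1\|$ via
\[
\| e^{iu_\epsilon}-1\| \le \sum_{m=1}^\infty \frac{1}{m!}\, \| u_\epsilon^m \| \le \sum_{m=1}^\infty \frac{D_R^{\,m-1}}{m!}\, \|u_\epsilon\|^m = \frac{1}{D_R}\Big( e^{D_R \|u_\epsilon\|} - 1\Big),
\]
using \eqref{ws-29-gen} iterated $m-1$ times. Here the subalgebra property is crucial because it keeps all iterated products inside the same space $\mathcal{M}^{w_*}_{p,q}(\epsilon,R)$ with a uniform constant $D_R$. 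Since $e^{iu}-1 = e^{iu_R}\prod_\epsilon e^{iu_\epsilon} - 1$, I would telescope this product, bounding $e^{iu}-1$ by a finite sum of terms each containing one factor $e^{iu_\epsilon}-1$ times bounded factors, and then choose $R$ optimally as a function of $\|u\|$. The whole point is that $D_R \to 0$ as $R\to\infty$ by Proposition \ref{subalgebraUltraGenRV}, so for large $\|u\|$ we can afford a large $R$; the trade-off is that enlarging $R$ also enlarges the low-frequency factor $e^{w_*(\sqrt n R)}$ and shifts more mass into $u_R$.

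\textbf{The optimisation and the main obstacle.} The heart of the argument is the asymptotic analysis of $D_R$. From Proposition \ref{subalgebraUltraGenRV} we have, in case (a), $D_R \asymp \big(\int_{sq'c(R-2)^\alpha}^\infty y^{n/\alpha-1}e^{-y}\,dy\big)^{1/q'}$, and Lemma \ref{lma45brs} gives the inverse asymptotics of exactly such an incomplete-Gamma integral: the equation $D_R^{1/q'}$-type balance forces $(R-2)^\alpha \sim \tfrac{1}{sq'c}\log\frac{1}{D_R}$. Balancing $D_R e^{D_R\|u\|}$ against the low-frequency weight $e^{w_*(\sqrt n R)} = e^{(\sqrt n R)^\alpha \widetilde w_*(\sqrt n R)}$ leads to the choice $R \asymp \|u\|(\log\|u\|)^{1/\alpha}$, which is precisely where the factors $\|u\|^\alpha\log\|u\|$ and the argument $a\|u\|(\log\|u\|)^{1/\alpha}$ of $\widetilde w_*$ in the statement originate; in case (a), $\widetilde w_*$ bounded absorbs into the constant $b$, while in case (b) it survives as the extra factor $\widetilde w_*(a\|u\|(\log\|u\|)^{1/\alpha})$. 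I expect the main obstacle to be making this balancing rigorous and uniform: one must verify that a single $R=R(\|u\|)$ simultaneously controls the series sum (needing $D_R\|u\|$ not too large) and the low-frequency weight, and that substituting the inverse asymptotics from Lemma \ref{lma45brs} produces exactly the claimed exponent. The monotonicity and regular-variation properties \ref{A1}, \ref{A7}, \ref{A8} of $w_*$, together with the uniform convergence for regularly varying functions, are what guarantee that $\widetilde w_*$ evaluated at the optimal $R$ behaves as stated and that the estimate is uniform in $u$.
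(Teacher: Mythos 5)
Your overall architecture (split $u$ into $u_R$ with spectrum in $P_R$ plus octant pieces $u_\epsilon$, sum the exponential series for the octant pieces with the decaying subalgebra constant $D_R$ from Proposition \ref{subalgebraUltraGenRV}, telescope the product, then optimise $R$ against $\|u\|$ via Lemma \ref{lma45brs}) is indeed the paper's strategy, inherited from \cite{rrs} and \cite{brs}. But there is a genuine gap, and it sits exactly where you declare the problem solved: the low-frequency part. You argue that on $u_R$ the weight is bounded by $e^{w_*(\sqrt n R)}$, so that ``$\mathcal{M}^{w_*}_{p,q}$ reduces essentially to an $L^p$-type algebra and $e^{iu_R}-1$ is harmless; the genuine growth comes only from the high-frequency octant pieces.'' This is false: $e^{iu_R}-1$ is a nonlinear image of $u_R$ and does \emph{not} have Fourier support in $P_R$. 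The power $u_R^m$ has spectrum in a box of size $\sim mR$, so the spectrum of $e^{iu_R}-1$ spreads out to frequencies of order $R\|u\|$ (and beyond, with tails), and the naive series bound $\sum_m e^{w_*(\sqrt n\, mR)}\,(C\|u\|)^m/m!$ is dominated near $m\approx \|u\|$ by a factor $e^{C\|u\|}$, which is exponential in $\|u\|$ and destroys the claimed subexponential bound $e^{b\|u\|^\alpha\log\|u\|}$. In fact the situation is the opposite of what you assert: the octant pieces only contribute $\|u\|e^{D_R\|u\|}$, which after the paper's calibration $D_R/D_2=\|u\|^{\alpha-1}$ becomes $\|u\|e^{D_2\|u\|^\alpha}$ (no logarithm), while the dominant factor $e^{b\|u\|^\alpha\log(\|u\|)\,\widetilde w_*(a\|u\|(\log\|u\|)^{1/\alpha})}$ comes precisely from the low-frequency part. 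The correct treatment (the role of \eqref{ws-53}, where the box size is $Rr+1$ with $r\asymp\|u\|$, not $R$) uses the pointwise bound $|e^{iu_R}-1|\le |u_R|$, hence $\sup_k\|\Box_k(e^{iu_R}-1)\|_{L^p}\lesssim\|u\|$, for the bulk frequencies $|k|\lesssim R\|u\|$, paying the weight sum $e^{c(R\|u\|)^\alpha\widetilde w_*(cR\|u\|)}$, while for the tail $|k|\gg R\|u\|$ one exploits that $\Box_k(u_R^m)=0$ unless $m\gtrsim |k|/R$, so the superexponential decay of $1/m!$ beats the subexponential weight. None of this appears in your argument.

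Two consequences of the gap are visible in your own text. First, Lemma \ref{lma45brs} applied to $D_R/D_2=\|u\|^{\alpha-1}$ forces $R\asymp(\log\|u\|)^{1/\alpha}$; the scale $\|u\|(\log\|u\|)^{1/\alpha}$ that you attribute to the optimal $R$ is actually the bulk spectral extent $R\|u\|$ of $e^{iu_R}-1$, i.e.\ the argument of $\widetilde w_*$ in part (b) --- so you half-sensed the right quantity but located it in the wrong place. Second, if your premise that the low-frequency cost is only $e^{w_*(\sqrt n R)}$ were true, the optimal choice $R\asymp(\log\|u\|)^{1/\alpha}$ would yield a cost $e^{c\log(\|u\|)\,\widetilde w_*((\log\|u\|)^{1/\alpha})}$, a bound far \emph{better} than the lemma claims --- and better than what is known to hold already in the Gevrey case of \cite{brs}, \cite{rrs}; this mismatch is itself a signal that the premise cannot stand. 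The telescoping step also needs the factor $e^{iu_R}$ bounded in the algebra norm, which again requires the bulk/tail analysis above, so the gap propagates through the rest of the proof. Your octant estimate via \eqref{ws-29-gen} and the role of $1<p<\infty$ (characteristic functions of cubes as $L^p$ Fourier multipliers) are fine.
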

\begin{proof}
We follow the proof of Lemma 3.6 in \cite{rrs}. Note that the idea of this proof originates from \cite{brs}. We give a comment to the estimate of the following term 
\begin{eqnarray} \label{ws-53}
	\sum_{\substack{k\in\Z^n, \\ -Rr-1<k_i< Rr+1, \\ i=1,\ldots,n}} e^{w_*(|k|) q} & \leq & \int_{|x|< \sqrt{n}(Rr+1)}\, e^{|x|^\alpha \widetilde{w}_* (|k|) q}  \, dx \nonumber \\
		& = & 2\, \frac{\pi^{n/2}}{\Gamma (n/2)}\,  \int_{0}^{\sqrt{n} (Rr+1)} \tau^{n-1} e^{ \tau^{\alpha}  \widetilde{w}_* (\tau) q} \, d\tau \, .
\end{eqnarray}
Now we distinguish two cases depending on $\widetilde{w}_*$, which comes from \eqref{ws-51}. \\
{\em Step 1.1.} Assume $\widetilde{w}_*$ is a bounded function. Then we put $c_0 := \sup_{t\in (0,\infty)} \widetilde{w}_*(t)$ and continue the estimate \eqref{ws-53} as follows:
\begin{eqnarray*}
	2\, \frac{\pi^{n/2}}{\Gamma (n/2)}\,  \int_{0}^{\sqrt{n} (Rr+1)} \tau^{n-1} e^{ \tau^{\alpha}  \widetilde{w}_* (\tau) q} \, d\tau & \leq & 2\, \frac{\pi^{n/2}}{\Gamma (n/2)}\,  \int_{0}^{\sqrt{n} (Rr+1)} \tau^{n-1} e^{ \tau^{\alpha} c_0 q} \, d\tau \\
	& \leq & C (\sqrt{n}(Rr + 1))^{n} e^{(\sqrt{n}(Rr+1))^\alpha c_0 q} \\
	& \leq & C e^{c_1 R^\alpha r^\alpha }
\end{eqnarray*}
with constants $C>0$ depending on $n$ and $c_1 >0$ depending on $n, q, \alpha, c_0$. \\ 
{\em Step 1.2.} The constant $D_R$ in Proposition \ref{subalgebraUltraGenRV} is strictly increasing and positive as a function of $R$ and we have $\lim_{R\to \infty} D_R = 0$. Following the proof of Lemma 3.6 in \cite{rrs} and setting
\[ \frac{D_R}{D_2} = \| u\|_{\mathcal{M}^{w_*}_{p,q}}^{\alpha -1} \]
together with Lemma \ref{lma45brs} yields the desired result.\\
{\em Step 2.1.} Suppose $\lim_{t\to \infty} \widetilde{w}_*(t) = \infty$. Then we continue the estimate \eqref{ws-53} as follows:
\begin{eqnarray*}
	2\, \frac{\pi^{n/2}}{\Gamma (n/2)}\,  \int_{0}^{\sqrt{n} (Rr+1)} \tau^{n-1} e^{ \tau^{\alpha}  \widetilde{w}_* (\tau) q} \, d\tau & \leq &C (\sqrt{n}(Rr + 1))^{n} e^{b (\sqrt{n}(Rr+1))^\alpha \widetilde{w}_* (\sqrt{n} (Rr +1)) q} \\
	& \leq & C e^{b c_1 R^\alpha r^\alpha \widetilde{w}_* (c_2 R r) }
\end{eqnarray*}
with constants $C>0$ depending on $n$, $b>0$ depending on $\widetilde{w}_*$, $c_1>0$ depending on $n, q, \alpha$ and $c_2 >0$ depending on $n$. \\
{\em Step 2.2.} The subalgebra constant $D_R$ is explained by Proposition \ref{subalgebraUltraGenRV}(a). Using the same arguments as in Step 1.2 gives the desired estimate. \\
The proof is complete.
\end{proof}

\begin{rem}
	The restriction with respect to $p$, i.e. $1<p<\infty$, appears in the proof of Lemma 3.6 in \cite{rrs} which we basically followed to prove Lemma \ref{estSuperUltraGenRV}. There the authors decompose a general function $u$ into $2^{n+1}$ parts by letting each part be Fourier supported on one of the sets $P_R (\epsilon)$ and $P_R$, respectively. This decomposition is realised by characteristic functions. For $p\in (1,\infty)$ characteristic functions on cubes are Fourier multipliers in $L^p (\R^n)$ by the Riesz Theorem. For further details we refer to Lizorkin \cite{lizorkin}.  
\end{rem}

\begin{lma} \label{ContExpUltraGen}
Let $1<p <\infty$ and  $1\le q \le \infty$. Let $w_* \in \mathcal{W}(\R)$. Assume $u\in \mathcal{M}^{w_*}_{p,q}(\R^n)$ to be fixed and define a function $g: \R \mapsto \mathcal{M}^{w_*}_{p,q}(\R^n)$ by 
$g(\xi) = e^{\i u(x) \xi}-1$. Then the function $g$ is continuous.
\end{lma}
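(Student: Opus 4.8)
The plan is to prove continuity at an arbitrary point $\xi_0\in\R$ and to reduce everything to the elementary fact that $\|e^{\i v}-1\|\to 0$ whenever $\|v\|\to 0$. First I would record, for any $v\in\mathcal{M}^{w_*}_{p,q}(\R^n)$, the bound obtained from the power series $e^{\i v}-1=\sum_{j\ge 1}\frac{\i^{\,j}}{j!}v^j$ together with the iterated algebra inequality $\|v^j\|\le C^{\,j-1}\|v\|^j$ coming from Theorem \ref{algebraUltraGen}, where $C$ denotes the algebra constant. This yields $\|e^{\i v}-1\|\le \frac{1}{C}\big(e^{C\|v\|}-1\big)$, which in particular shows $e^{\i v}-1\in\mathcal{M}^{w_*}_{p,q}(\R^n)$ and that $\|e^{\i v}-1\|\lesssim\|v\|$ for small $\|v\|$; hence $\|e^{\i v}-1\|\to 0$ as $\|v\|\to 0$.

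The second ingredient is an algebraic identity for the increment. The naive factorization $e^{\i u\xi}-e^{\i u\xi_0}=e^{\i u\xi_0}\big(e^{\i u(\xi-\xi_0)}-1\big)$ is not directly useful, since the factor $e^{\i u\xi_0}$ is bounded but not integrable and therefore does not belong to $\mathcal{M}^{w_*}_{p,q}(\R^n)$, so the algebra estimate cannot be applied to it. I expect this to be the main (though mild) obstacle. It is resolved by writing $e^{\i u\xi_0}=(e^{\i u\xi_0}-1)+1$, which gives
\begin{equation*}
 g(\xi)-g(\xi_0)=\big(e^{\i u\xi_0}-1\big)\big(e^{\i u(\xi-\xi_0)}-1\big)+\big(e^{\i u(\xi-\xi_0)}-1\big).
\end{equation*}
Now both summands are genuine elements of the algebra: the first is a product of two members of $\mathcal{M}^{w_*}_{p,q}(\R^n)$ and the second is a single member.

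Finally, applying Theorem \ref{algebraUltraGen} to the product, I would estimate
\begin{equation*}
 \|g(\xi)-g(\xi_0)\|\le C\,\big\|e^{\i u\xi_0}-1\big\|\,\big\|e^{\i u(\xi-\xi_0)}-1\big\|+\big\|e^{\i u(\xi-\xi_0)}-1\big\|.
\end{equation*}
Here $\|e^{\i u\xi_0}-1\|$ is a fixed finite constant (by the first step, since $u$ is fixed), while $\|u(\xi-\xi_0)\|=|\xi-\xi_0|\,\|u\|\to 0$ as $\xi\to\xi_0$, so the first step forces $\|e^{\i u(\xi-\xi_0)}-1\|\to 0$. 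Hence the right-hand side tends to $0$, proving continuity of $g$ at $\xi_0$; as $\xi_0\in\R$ was arbitrary, $g$ is continuous on all of $\R$. In fact the same estimate shows that $g$ is locally Lipschitz, since $\|e^{\i u(\xi-\xi_0)}-1\|\lesssim|\xi-\xi_0|\,\|u\|$ for $\xi$ near $\xi_0$. Note that only the algebra property, which is valid for all $w_*\in\mathcal{W}(\R)$, is used, consistent with the generality of the hypothesis.
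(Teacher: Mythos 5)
Your proof is correct, and it uses the same key identity as the paper -- the author also writes $e^{\i u}-e^{\i v}=(e^{\i v}-1)(e^{\i(u-v)}-1)+(e^{\i(u-v)}-1)$ and then applies the algebra property of Theorem \ref{algebraUltraGen} -- but the two arguments diverge in how they control $\|e^{\i v}-1\|$ for small $\|v\|$. The paper invokes its sharp exponential estimates, Lemma \ref{estSuperUltraGenRV} for $w_*\in\mathcal{W}_1(\R)$ and Lemma \ref{estSuperUltraGenSV} for $w_*\in\mathcal{W}_0(\R)$, both of which give $\|e^{\i v}-1\|\le c\|v\|$ once $\|v\|\le 1$; this forces a case distinction over the two weight classes and a forward reference (Lemma \ref{estSuperUltraGenSV} is only proved later). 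You instead derive $\|e^{\i v}-1\|\le C^{-1}\bigl(e^{C\|v\|}-1\bigr)$ directly from the power series and the iterated algebra inequality, which is cruder but entirely sufficient for continuity. Your route buys three things: it is self-contained within Section \ref{MultAlg}, it treats $\mathcal{W}_0(\R)$ and $\mathcal{W}_1(\R)$ uniformly, and it does not actually need the hypothesis $1<p<\infty$ -- that restriction in the paper is inherited from the Fourier-multiplier argument underlying Lemmas \ref{estSuperUltraGenRV} and \ref{estSuperUltraGenSV}, so your argument proves the lemma for all $1\le p\le\infty$. The paper's route, in exchange, recycles machinery it needs anyway for Theorems \ref{SuperpositionUltraGenRV} and \ref{SuperpositionUltraGenSV} and yields the quantitatively sharper linear bound. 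One small point you gloss over: you should justify that the $\mathcal{M}^{w_*}_{p,q}$-limit of the partial sums $\sum_{j\le J}\frac{\i^j}{j!}v^j$ is really the function $e^{\i v}-1$; this is routine, since norm convergence implies convergence in $\S'(\R^n)$ (via the embedding into $\mathcal{M}^{w_*}_{\infty,\infty}(\R^n)$ of Lemma \ref{einbettung}), while the partial sums converge uniformly because $v$ is bounded, so the two limits coincide. With that sentence added, your argument is complete.
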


\begin{proof}
We make us of the identity 
\[ e^{iu} -e^{iv} = (e^{iv} -1)(e^{i(u-v)} -1) + (e^{i(u-v)} -1)  \]
and the algebra property, see Theorem \ref{algebraUltraGen}. Due to Lemma \ref{estSuperUltraGenRV} the claim follows for $w_* \in \mathcal{W}_1 (\R)$. If $w_* \in \mathcal{W}_0 (\R)$, then Lemma \ref{estSuperUltraGenSV} below yields the claim. 
\end{proof}

Now we have all tools in order to state the main theorem of this section. 

\begin{thm} \label{SuperpositionUltraGenRV}
Let $1<p <\infty$ and $1\le q \le \infty$. Assume that $w_* \in \mathcal{W}_1 (\R)$ with the representation \eqref{ws-51}.
\begin{itemize}
\item[(a)] If $\widetilde{w}_*$ is bounded, then let $\mu$ be a complex measure on $\R$ such that
\begin{equation} \label{FourierEstUltraGenRV1}
	L_1(\lambda) := \int_{\R} e^{\lambda |\xi|^\alpha \log |\xi| } \, d|\mu| (\xi) < \infty
\end{equation}
for any $\lambda >0 $ and such that $\mu(\R) = 0$.

\item[(b)] If $\lim_{t\to\infty} \widetilde{w}_* (t) = \infty$, then assume $\mu$ to be a complex measure on $\R$ such that
\begin{equation} \label{FourierEstUltraGenRV2}
	L_2(\lambda) := \int_{\R} e^{ \lambda  |\xi|^\alpha \log (|\xi|)  \widetilde{w}_* ( |\xi| (\log |\xi|)^{\frac{1}{\alpha}}) } \, d|\mu| (\xi) < \infty
\end{equation}
for any $\lambda >0 $ and such that $\mu(\R) = 0$.
\end{itemize}
Let the function $f$ be the inverse Fourier transform of $\mu$. Then $f\in C^\infty (\R)$ and the composition operator $T_f: u \mapsto f \circ u$ maps $\mathcal{M}^{w_*}_{p,q} (\R^n)$ into $\mathcal{M}^{w_*}_{p,q} (\R^n)$.
\end{thm}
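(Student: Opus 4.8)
The plan is to represent $f$ via its Fourier inversion formula and exploit the condition $\mu(\R)=0$ to write $T_f u = f\circ u$ as an integral of the elementary building blocks $e^{\i u \xi}-1$, whose modulation-space norms are precisely controlled by Lemma \ref{estSuperUltraGenRV}. Concretely, since $f = \F^{-1}\mu$, for real $u$ we have (formally)
\[
 f(u(x)) = (2\pi)^{-\frac 12}\int_\R e^{\i u(x)\xi}\, d\mu(\xi),
\]
and subtracting the constant $f$-value coming from $\mu(\R)=0$ lets us replace $e^{\i u(x)\xi}$ by $e^{\i u(x)\xi}-1$ under the integral. Thus
\[
 T_f u = (2\pi)^{-\frac 12}\int_\R \big(e^{\i u \xi}-1\big)\, d\mu(\xi),
\]
which I would interpret as a Bochner integral of the $\mathcal{M}^{w_*}_{p,q}$-valued map $\xi\mapsto e^{\i u\xi}-1$ (continuous by Lemma \ref{ContExpUltraGen}). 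The goal then reduces to showing this vector-valued integral converges absolutely in $\mathcal{M}^{w_*}_{p,q}(\R^n)$, for which it suffices that $\int_\R \|e^{\i u\xi}-1\|\, d|\mu|(\xi)<\infty$.

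\textbf{Next I would bound the integrand using Lemma \ref{estSuperUltraGenRV}.} For a fixed $u\in\mathcal{M}^{w_*}_{p,q}(\R^n)$ and scalar $\xi$, note $\|u\xi\|=|\xi|\,\|u\|$, so applying the lemma with $u$ replaced by $\xi u$ gives, in case (a),
\[
 \|e^{\i u\xi}-1\| \le c\,|\xi|\,\|u\|\; e^{\,b\,(|\xi|\,\|u\|)^\alpha \log(|\xi|\,\|u\|)}
\]
whenever $|\xi|\,\|u\|>1$, and a bounded contribution otherwise. The exponent here is dominated by $\lambda|\xi|^\alpha\log|\xi|$ for a suitable $\lambda$ depending on $\|u\|$ (absorbing the lower-order factors $|\xi|$, $\log\|u\|$ and the polynomial prefactor into the exponential by enlarging $\lambda$). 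Hence
\[
 \int_\R \|e^{\i u\xi}-1\|\, d|\mu|(\xi) \;\lesssim\; \int_\R e^{\lambda|\xi|^\alpha\log|\xi|}\, d|\mu|(\xi) = L_1(\lambda) <\infty
\]
by hypothesis \eqref{FourierEstUltraGenRV1}. Case (b) is identical but matches the sharper bound of Lemma \ref{estSuperUltraGenRV}(b) against the growth condition \eqref{FourierEstUltraGenRV2}; the argument of $\widetilde w_*$ in the lemma, namely $a\|u\xi\|(\log\|u\xi\|)^{1/\alpha}$, lines up with $|\xi|(\log|\xi|)^{1/\alpha}$ in $L_2$ after absorbing $\|u\|$-dependent constants (using monotonicity of $\widetilde w_*$).

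\textbf{Finally} I would record that $f\in C^\infty(\R)$ follows from the finiteness of all moments implicit in \eqref{FourierEstUltraGenRV1}/\eqref{FourierEstUltraGenRV2} (the condition forces all polynomial moments of $|\mu|$ to be finite, so $\F^{-1}\mu$ is smooth by differentiation under the integral sign), and conclude $T_f u\in\mathcal{M}^{w_*}_{p,q}(\R^n)$ from absolute convergence of the Bochner integral together with completeness (Lemma \ref{basic}(i)). \textbf{The main obstacle I anticipate} is the careful justification that the scalar inversion formula for $f\circ u$ upgrades to a genuine $\mathcal{M}^{w_*}_{p,q}$-valued Bochner integral that agrees pointwise with the classical composition $f(u(x))$ — one must verify measurability and that evaluation commutes with the integral, ideally by a density/approximation argument or by invoking the embedding $\mathcal{M}^{w_*}_{p,q}\hookrightarrow\mathcal{E}_{\{w_*\}}$ from Proposition \ref{UltraCharFunc} to control pointwise values. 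The subtlety of matching the precise $\widetilde w_*$-argument in Lemma \ref{estSuperUltraGenRV}(b) with the integrand in $L_2(\lambda)$, including the dependence of constants on $\|u\|$, is the secondary technical point to handle with care.
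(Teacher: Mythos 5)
Your proposal is correct and takes essentially the same route as the paper: the paper likewise uses $\mu(\R)=0$ to write $T_f u = (2\pi)^{-\frac 12}\int_\R \big(e^{\i u(x)\xi}-1\big)\,d\mu(\xi)$ and then combines Lemma \ref{ContExpUltraGen} with Lemma \ref{estSuperUltraGenRV} (applied to $\xi u$, with the $\|u\|$-dependent constants absorbed into $\lambda$, which is legitimate since $L_1(\lambda)$, $L_2(\lambda)$ are finite for \emph{every} $\lambda>0$), following the proof of Theorem 3.9 in \cite{rrs}. One small correction: in case (b) the matching of $\widetilde w_*\big(a\|u\xi\|(\log\|u\xi\|)^{1/\alpha}\big)$ with the integrand of $L_2$ is justified not by monotonicity of $\widetilde w_*$ (which need not hold) but by its slow variation, namely $\widetilde w_*(dt)\le C\,\widetilde w_*(t)$ for each fixed $d>0$, which is exactly the additional fact the paper invokes.
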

\begin{proof}
	Taking account of $\mu (\R) = 0$ the composition operator $T_f$ is given by
	\[ T_f u = (f \circ u) (x) = \frac{1}{\sqrt{2\pi}} \int_{\R} \big( e^{i u(x) \xi} - 1 \big) \, d\mu(\xi) \]
	for all $u \in \mathcal{M}^{w_*}_{p,q} (\R^n)$. Now we exactly follow the proof of Theorem 3.9 in \cite{rrs} by using equation \eqref{FourierEstUltraGenRV1} or respectively \eqref{FourierEstUltraGenRV2}, Lemma \ref{ContExpUltraGen} and Lemma \ref{estSuperUltraGenRV}. Note that in case $(b)$ we additionally use the fact that for all positive fixed numbers $d$ there exists a constant $C$ such that 
	\[ \widetilde{w}_* (dt) \leq C \widetilde{w}_*(t) \]
	for all $t>0$ due to the assumptions. For sufficiently large $t$ we can even put $C:=1+\delta$ for any $\delta>0$ since $\widetilde{w}_*$ is slowly varying. This completes the proof.
\end{proof}

Note the following conclusion.

\begin{cor} \label{SuperpositionCorUltraGenRV}
	Let $1<p <\infty$, $1\le q \le \infty$ and $w_* \in \mathcal{W}_1 (\R)$ with the representation \eqref{ws-51}. Let $\mu$ be a complex measure on $\R$ with the corresponding bounded density function $g$, i.e., $d\mu (\xi) = g(\xi) \, d\xi$. 
	\begin{itemize}
	\item[(a)] If $\widetilde{w}_*$ is bounded, then assume
		\begin{equation*} 
			\lim_{|\xi|\to \infty} \frac{|\xi|^\alpha \log|\xi|}{\log |g(\xi)|} = 0
		\end{equation*}
		and $\displaystyle \int_{\R} d\mu (\xi) = \int_{\R} g(\xi) \, d\xi = 0$.
	\item[(b)] If $\lim_{t\to\infty} \widetilde{w}_* (t) = \infty$, then assume
		\begin{equation*} 
			\lim_{|\xi|\to \infty} \frac{|\xi|^\alpha \log|\xi| \widetilde{w}_* (|\xi| (\log |\xi| )^{\frac{1}{\alpha}})}{\log |g(\xi)|} = 0
		\end{equation*}
		and $\displaystyle \int_{\R} d\mu (\xi) = \int_{\R} g(\xi) \, d\xi = 0$.
	\end{itemize}
Let $f$ be the inverse Fourier transform of $g$. Then $f\in C^\infty (\R)$ and the composition operator $T_f: u \mapsto f \circ u$ maps $\mathcal{M}^{w_*}_{p,q} (\R^n)$ into $\mathcal{M}^{w_*}_{p,q} (\R^n)$. 
\end{cor}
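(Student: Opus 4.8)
The plan is to deduce the corollary directly from Theorem \ref{SuperpositionUltraGenRV} by checking that the stated decay hypotheses on the density $g$ are exactly strong enough to force the exponential integrability conditions \eqref{FourierEstUltraGenRV1} and \eqref{FourierEstUltraGenRV2}. Since $d\mu(\xi) = g(\xi)\,d\xi$, we have $d|\mu|(\xi) = |g(\xi)|\,d\xi$, and the normalisation $\int_\R g(\xi)\,d\xi = 0$ is precisely the requirement $\mu(\R)=0$ demanded by the theorem. Thus it only remains to verify $L_1(\lambda)<\infty$ (in case (a)) and $L_2(\lambda)<\infty$ (in case (b)) for every $\lambda>0$; once this is established, Theorem \ref{SuperpositionUltraGenRV} immediately yields $f = \F^{-1}g \in C^\infty(\R)$ together with the mapping property of $T_f$, and nothing further is needed.

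For case (a) I would write the weight as $\Phi(\xi) := |\xi|^\alpha \log|\xi|$, so that the integrand in \eqref{FourierEstUltraGenRV1} equals $e^{\lambda \Phi(\xi)}\,|g(\xi)| = e^{\lambda\Phi(\xi) + \log|g(\xi)|}$. Because $\alpha>0$ we have $\Phi(\xi)\to+\infty$, so the hypothesis $\Phi(\xi)/\log|g(\xi)| \to 0$ forces $\log|g(\xi)|\to-\infty$; hence for large $|\xi|$ one has $\log|g(\xi)|<0$ and the ratio may be read as $\Phi(\xi)/|\log|g(\xi)||\to 0$. Fixing $\lambda>0$ and choosing $\epsilon := \tfrac{1}{2\lambda}$, there is an $R$ with $\lambda\Phi(\xi) < -\tfrac12\log|g(\xi)|$ for all $|\xi|>R$, whence $\lambda\Phi(\xi)+\log|g(\xi)| < \tfrac12\log|g(\xi)|$ and the integrand is dominated by $|g(\xi)|^{1/2} \le e^{-\lambda\Phi(\xi)}$ on $\{\,|\xi|>R\,\}$. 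Since $e^{-\lambda\Phi(\xi)}=e^{-\lambda|\xi|^\alpha\log|\xi|}$ decays faster than any negative power of $|\xi|$, it is integrable at infinity, while on the compact set $\{\,|\xi|\le R\,\}$ the integrand is bounded because $g$ is bounded and $\Phi$ is continuous. This delivers $L_1(\lambda)<\infty$ for every $\lambda>0$.

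Case (b) then follows by the identical argument with $\Phi$ replaced by the larger weight $\Psi(\xi) := |\xi|^\alpha \log|\xi|\,\widetilde{w}_*\big(|\xi|(\log|\xi|)^{1/\alpha}\big)$ occurring in \eqref{FourierEstUltraGenRV2}: the hypothesis $\Psi(\xi)/\log|g(\xi)|\to 0$ again produces, for each $\lambda$, a domination of the integrand by $|g(\xi)|^{1/2}\le e^{-\lambda\Psi(\xi)}$ for large $|\xi|$, and since $\Psi\ge\Phi$ the tail integrability is at least as good, giving $L_2(\lambda)<\infty$ for all $\lambda>0$ and letting Theorem \ref{SuperpositionUltraGenRV}(b) apply. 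The one point genuinely requiring care — and the step I expect to be the main obstacle — is the bookkeeping that upgrades the qualitative limit hypothesis into the uniform quantitative bound $\lambda\Phi(\xi)<-\tfrac12\log|g(\xi)|$ for large $|\xi|$: one must track the sign of $\log|g(\xi)|$ correctly and confirm that the surviving tail $|g|^{1/2}$ is truly integrable over $\R$. Everything beyond this reduces to a direct invocation of the theorem.
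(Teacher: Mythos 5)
Your proposal is correct and takes essentially the same route as the paper: the paper's proof is a one-line deferral to Corollary 3.11 in \cite{rrs}, which performs exactly this verification—converting the limit hypothesis on $g$ into the finiteness of $L_1(\lambda)$ resp.\ $L_2(\lambda)$ for every $\lambda>0$ (boundedness of $g$ handling the compact part, the forced superpolynomial decay of $|g|$ handling the tail) and then invoking Theorem \ref{SuperpositionUltraGenRV} with $d|\mu|(\xi)=|g(\xi)|\,d\xi$ and $\mu(\R)=0$. Your write-up simply spells out the dominations that the cited proof uses.
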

\begin{proof}
We follow the proof of  Corollary 3.11 in \cite{rrs} which yields the desired result.
\end{proof}

\begin{ex}
In Section \ref{WeightClass} the general weight function \eqref{WeightEx1} was introduced as a natural extension of the weights discussed in \cite{rrs}. If $s\in (1,\infty)$ and $r_k \in \R$, $k=1,2,\ldots, m$, then $w_* \in \mathcal{W}_1 (\R)$ is a regularly varying function of index $\frac{1}{s}$. For particular examples let us consider again the weight functions mentioned in the remark of Proposition \ref{subalgebraUltraGenRV}. If $w_* (|k|) = \langle k \rangle^{\frac{1}{s}} l_m \langle k \rangle_*$, $s>1, m\in \N$, then Theorem \ref{SuperpositionUltraGenRV}(b) yields a non-analytic superposition operator on $\mathcal{M}^{w_*}_{p,q} (\R^n)$. Given the weight function $w_* (|k|) = \langle k \rangle^{\frac{1}{s}} \big(\log\langle k \rangle_e \big)^r$, $s>1$, $r<0$, Theorem \ref{subalgebraUltraGenRV}(a) gives the desired non-analytic superposition in the modulation space $\mathcal{M}^{w_*}_{p,q} (\R^n)$. Note that for $s=1$ in \eqref{WeightEx1} condition \ref{A1} in Definition \ref{DefClassWeight} is violated and our methods are not applicable. \\
Now we recall one example of a particular superposition operator on $\mathcal{M}^{w_*}_{p,q} (\R^n)$ for the case $w_*(|k|) = \langle k \rangle^{\frac{1}{s}}$, $s>1$, which was already treated in \cite{rrs}. Let us consider the construction in Rodino \cite[Example 1.4.9]{rodino}. Let $\mu <0$ and let for $t \in \R$
\[ \psi_\mu (t):= \left\{ \begin{array}{lll} 
	e^{-t^\mu} & \qquad & \mbox{if} \quad t>0\, , \\
		0 && \mbox{otherwise}\, .
	\end{array} \right. \]
By taking
\[ \varphi_\mu (t):= \psi_\mu (1-t)\, \cdot \psi_\mu (t)\, , \qquad t\in \R\, , \]
we obtain a compactly supported $C^\infty$ function on $\R$. It follows $\varphi_\mu \in G^s (\R)$ for $s= 1-1/\mu$. Here the classes $G^s(\R)$ refer to classical Gevrey regularity, see \cite[Definition 1.4.1]{rodino}.
We skip the definition here and recall a further result, see \cite[Theorem 1.6.1]{rodino}. Since $\varphi_\mu \in G^s (\R)$  has compact support there exists a positive constant $c$ and some $\varepsilon >0$ such that
\[ |\F \varphi_\mu (\xi)| \le c \, e^{-\varepsilon \, |\xi|^{1/s}}\, , \quad \xi \in \R\, . \]
Because of $\varphi_\mu (0) =0$ Corollary \ref{SuperpositionCorUltraGenRV}(a) yields the following.
\end{ex}

\begin{cor} \label{ex1}
	Let the weight parameter $s>1$, $1 <p < \infty$ and  $1 \leq q \leq \infty$. Put $w_*(|k|) = \langle k \rangle^{\frac{1}{s}}$. Suppose that $\mu < \frac 1{1-s}$. Then  $T_{\varphi_\mu}: u \mapsto \varphi_\mu \circ u$ maps $\mathcal{M}^{w_*}_{p,q} (\R^n)$ into $\mathcal{M}^{w_*}_{p,q} (\R^n)$.
\end{cor}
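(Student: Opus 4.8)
The plan is to verify the hypotheses of Corollary~\ref{SuperpositionCorUltraGenRV}(a) and then invoke it directly. First I observe that for $w_*(|k|) = \langle k\rangle^{1/s}$ we are in the situation $\alpha = \tfrac{1}{s} \in (0,1)$ with $\widetilde{w}_* \equiv 1$ bounded, so $w_* \in \mathcal{W}_1(\R)$ and the boundedness case (a) of that corollary is the relevant one. I would take the density to be $g := \F \varphi_\mu$, so that $f = \F^{-1} g = \varphi_\mu$ and $d\mu(\xi) = g(\xi)\,d\xi$ in the notation of the corollary. Since $\varphi_\mu$ is a compactly supported smooth function it lies in $L^1(\R)$, hence $g = \F \varphi_\mu$ is bounded (indeed it decays exponentially), so $\mu$ is a genuine complex measure with bounded density.

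Next I would check the two hypotheses of Corollary~\ref{SuperpositionCorUltraGenRV}(a). The vanishing-integral condition $\int_\R g(\xi)\,d\xi = 0$ follows from Fourier inversion, since $(2\pi)^{-1/2}\int_\R g(\xi)\,d\xi = \F^{-1}g(0) = \varphi_\mu(0) = 0$ by the construction $\varphi_\mu(t) = \psi_\mu(1-t)\,\psi_\mu(t)$. The crux is the growth condition
\[ \lim_{|\xi|\to\infty} \frac{|\xi|^\alpha \log|\xi|}{\log|g(\xi)|} = 0, \]
and here the decisive input is the hypothesis $\mu < \tfrac{1}{1-s}$. Writing $s_0 := 1 - 1/\mu$ for the Gevrey index of $\varphi_\mu$, the inequality $\mu < \tfrac{1}{1-s}$ (both quantities negative, as $s>1$) rearranges to $s_0 < s$, i.e. $\varphi_\mu \in G^{s_0}(\R)$ with $s_0$ strictly smaller than the weight parameter $s$. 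Consequently the Gevrey decay estimate recalled before the statement sharpens to $|g(\xi)| = |\F \varphi_\mu(\xi)| \le c\, e^{-\varepsilon |\xi|^{1/s_0}}$, so that $\log|g(\xi)| \le \log c - \varepsilon|\xi|^{1/s_0}$ for large $|\xi|$.

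Inserting this into the ratio, the numerator behaves like $|\xi|^{1/s}\log|\xi|$ while the denominator is of order $-|\xi|^{1/s_0}$, giving a quantity comparable to $-\varepsilon^{-1}\,|\xi|^{1/s - 1/s_0}\log|\xi|$. Because $s_0 < s$ forces $\tfrac{1}{s} - \tfrac{1}{s_0} < 0$, this tends to $0$, which is precisely the place where the strict inequality on $\mu$ is needed; with $\mu = \tfrac{1}{1-s}$ one would only obtain $s_0 = s$ and the limit would fail. Having verified both hypotheses, Corollary~\ref{SuperpositionCorUltraGenRV}(a) immediately yields $\varphi_\mu \in C^\infty(\R)$ and that $T_{\varphi_\mu}$ maps $\mathcal{M}^{w_*}_{p,q}(\R^n)$ into itself. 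The only mildly delicate point I anticipate is the bookkeeping of the Fourier-transform conventions so that $f = \varphi_\mu$ is correctly identified as $\F^{-1}g$ (and the Hermitian symmetry of $g$ makes $f$ real-valued); once that is settled, the argument reduces to the elementary limit computation above.
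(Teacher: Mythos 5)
Your proposal is correct and follows essentially the same route as the paper: the paper likewise verifies the hypotheses of Corollary \ref{SuperpositionCorUltraGenRV}(a) with $g=\F\varphi_\mu$, using the Gevrey decay bound $|\F\varphi_\mu(\xi)|\le c\,e^{-\varepsilon|\xi|^{1/s_0}}$ for $\varphi_\mu\in G^{s_0}(\R)$, $s_0=1-1/\mu$, together with $\varphi_\mu(0)=0$ for the vanishing-moment condition. Your explicit rearrangement showing $\mu<\frac{1}{1-s}\iff s_0<s$, and the resulting limit computation with exponent $\tfrac1s-\tfrac1{s_0}<0$, is exactly the (tacit) content of the paper's argument, just spelled out more carefully.
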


\subsection{The Class $\mathcal{W}_0 (\R)$ of Weight Functions}

At first we state the subalgebra property for slowly varying weight functions, i.e., the counterpart of Proposition \ref{subalgebraUltraGenRV}. For this we recall the decomposition of the phase space $\R^n$ introduced above.

\begin{prop} \label{subalgebraUltraGenSV}
Let $1\leq p, q \leq \infty$ and $N \in \N$.  Suppose that $\epsilon=(\epsilon_1,\ldots, \epsilon_n)$ is fixed with 
$\epsilon_j \in \{0,1\}$, $j=1,\ldots,n$. Let $w_*\in \mathcal{W}_0 (\R)$ be the fixed weight and $R\geq 2$.
The spaces
\[ \mathcal{M}^{w_*}_{p,q}(\epsilon, R) := \{ f\in \mathcal{M}^{w_*}_{p,q}(\R^n): \supp \F(f) \subset P_R(\epsilon) \} \]
are subalgebras of $\mathcal{M}^{w_*}_{p,q} (\R^n)$. Furthermore, it holds
\begin{equation}\label{ws-30-gen}
\|f \, g\|_{\mathcal{M}^{w_*}_{p,q}} \leq G_{R,N} \, \|f\|_{\mathcal{M}^{w_*}_{p,q}} \|g\|_{\mathcal{M}^{w_*}_{p,q}} 
\end{equation}
	for all $f,g \in\mathcal{M}^{w_*}_{p,q}(\epsilon, R)$. The constant $G_{R,N}$ can be specified by
	\[ G_{R,N} := C_N \, R^{-N}, \]
	where the constant $C_N > 0 $ depends  on $n,p,q$ and  $N$, but not on $f,g$ and $R$.
\end{prop}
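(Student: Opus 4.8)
The plan is to follow the proof of Proposition \ref{subalgebraUltraGenRV} essentially line by line, the only genuinely new point being the decay estimate for the inner sum, which in the slowly varying case ($\alpha=0$) can no longer rely on the incomplete-Gamma integrals used there but must instead exploit condition \ref{A4}. First I would reproduce the reduction from Theorem \ref{algebraUltraGen}: exploiting $\supp\F(f),\supp\F(g)\subset P_R(\epsilon)$, only frequencies in $P_R^*(\epsilon)$ contribute to $\Box_k(f\cdot g)$, so that Hölder's inequality together with Lemma \ref{numerGeneral} bounds $\|f\,g\|_{\mathcal{M}^{w_*}_{p,q}}$ by the two terms $S_{1,t},S_{2,t}$. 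Exactly as there, the constant in front of $\|f\|\,\|g\|$ is governed by the factor $\bigl(\sum_{l} e^{-sq'w_*(|l|)}\bigr)^{1/q'}$, the sum being taken over indices with $\max_{j}|l_j|>R-1$; the case $\epsilon=(0,\ldots,0)$ suffices, the others following by symmetry.

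The heart of the matter is therefore to show, for each $M\in\N$,
\[
\sum_{\substack{l\in\Z^n,\\ \max_{j}|l_j|>R-1}} e^{-sq'w_*(|l|)} \;\le\; C_M\,R^{-M}
\]
uniformly in $R\ge 2$. I would pass to the integral $\int_{|x|>R-2} e^{-sq'w_*(|x|)}\,dx$ and introduce polar coordinates as in Step~1 of Proposition \ref{subalgebraUltraGenRV}. The decisive input is \ref{A4}: since $w_*(t)-K\log t\to\infty$ for every $K$, given $M$ I choose $K$ with $sq'K\ge M+n$, obtaining $e^{-sq'w_*(r)}\lesssim r^{-sq'K}$ for all $r$ beyond some threshold $r_0=r_0(K)$. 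For large $R$ this yields $\int_{R-2}^\infty r^{\,n-1-sq'K}\,dr\lesssim R^{\,n-sq'K}\le R^{-M}$, which is the required polynomial decay. Setting $M:=Nq'$ and taking the $1/q'$-th power then produces the claimed constant $G_{R,N}:=C_N R^{-N}$.

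The main obstacle I expect is the small-$R$ regime, because the bound $w_*(r)\ge K\log r$ from \ref{A4} is only available for $r\ge r_0(K)$, whereas for $R$ close to $2$ the integration region $\{r>R-2\}$ descends to small radii. I would dispose of this by splitting $R\in[2,r_0+2]$ from $R>r_0+2$: on the compact range the full series $\sum_{l} e^{-sq'w_*(|l|)}$ converges by \eqref{SeriesConv} (again a consequence of \ref{A4}) while $R^{-M}$ stays bounded below, so enlarging $C_M$ absorbs the difference, and on the unbounded range the polynomial estimate above applies verbatim. With the inner sum thus controlled, substituting $C_N R^{-N}$ for the factor $D_R^{1/q'}$ in the bounds for $S_{1,t}$ and $S_{2,t}$ and invoking Lemma \ref{einbettung} exactly as in Proposition \ref{subalgebraUltraGenRV} closes the argument.
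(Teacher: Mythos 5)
Your proposal is correct and follows essentially the same route as the paper: the same reduction to $S_{1,t}$, $S_{2,t}$ over $P_R^*(\epsilon)$ via the argument of Proposition \ref{subalgebraUltraGenRV}, with condition \ref{A4} converting the weight into polynomial decay and an integral comparison $\int_{|x|>R-2}(1+|x|)^{-Mq'}\,dx \lesssim (R-1)^{n-Mq'}$ yielding $G_{R,N}=C_N R^{-N}$ after choosing $M$ with $M-\frac{n}{q'}\geq N$. The only cosmetic difference is that the paper sidesteps your small-$R$ case split by using the global constant $c_M := \sup_{l\in\Z^n} e^{-s\, w_*(|l|)}(1+|l|)^M < \infty$, finite by \ref{A4} together with \ref{A5}, so the polynomial bound holds uniformly at all radii.
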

\begin{proof}
We proceed as in the proof of Proposition \ref{subalgebraUltraGenRV} and use the same notation. In particular, the quantities $S_{1,t}$, $S_{2,t}$ and the set $P_R^* (\epsilon)$ are taken over. Note that the proof strategy comes from Proposition 4.7 in \cite{rrs}. \\
Due to property \ref{A4} of $w_*$ there exists a constant $c_M$
such that 
\[
\sup_{l\in\Z^n} e^{-s\,  w_* (|l|)} \, (1+|l|)^M =:c_M < \infty
\]
for any natural number $M$. Note that it is sufficient to consider the case $\epsilon = (0, \ldots, 0)$ for $P_R^* (\epsilon)$. 
In order to estimate $S_{1,t}$ we consequently need to estimate again the quantity
\begin{eqnarray*}
	\sum_{\substack{l \in \Z^n, \\ l, t-(l-k) \in P_R^* (0,\ldots, 0), \\ |l|\leq |l-k|}} e^{-s q' w_*(|l|)} & \leq & \sum_{\substack{ l\ \in\Z^n, \\ \max_{j=1,\ldots,n} |l_j| > R-1}}  e^{-s q' w_*(|l|)} \\
		& \leq & c_M \int_{|x|>R-2} (1+|x|)^{-Mq'} \, dx.
\end{eqnarray*}
For $Mq' >n$ and by following the computations in the proof of Proposition 4.7 in \cite{rrs} we obtain
\[ \int_{|x|>R-2} (1+|x|)^{-Mq'} \, dx \leq C (R-1)^{n -Mq'} =: \widetilde{G}_R \]
with a constant $C = C(n,q,M)$ independent of $f$, $g$ and $R$. Thus, we get
\begin{eqnarray*}
	S_{1,t} & \lesssim & \widetilde{G}_R^{1/q'} \| f\|_{\mathcal{M}^{w_*}_{p_1,q}}  \| g\|_{\mathcal{M}^{w_*}_{p_2,q}} \qquad \text{and} \\
	S_{2,t} & \lesssim & \widetilde{G}_R^{1/q'} \| f\|_{\mathcal{M}^{w_*}_{p_1,q}}  \| g\|_{\mathcal{M}^{w_*}_{p_2,q}}
\end{eqnarray*}
as above. For arbitrary $N\in \N$ we can always choose $M\in \N$ such that $M- \frac{n}{q'} \geq N$. This yields the estimate
\[ \widetilde{G}_{R}^{1/q'} \leq C_N R^{-N} \]
for all $R\geq 2$ and with a constant $C_N$ depending on $n, q, M$ and $N$ but independent of $f$, $g$ and $R$.
\end{proof}

\begin{rem}
	Putting $w_* (|k|) = \log \langle k \rangle_{e^e} \log \log \langle k \rangle_{e^e}$ Proposition \ref{subalgebraUltraGenSV} coincides with Proposition 4.7 in \cite{rrs}.
\end{rem}

Now let us turn to the counterpart of Lemma \ref{estSuperUltraGenRV} for slowly varying weight functions.

\begin{lma} \label{estSuperUltraGenSV}
Let $1<p <\infty$ and  $1\le q \le \infty$.  Suppose $w_* \in \mathcal{W}_0 (\R)$. Then, for any $\vartheta >1$ and for any $N \in \N$
there exist  positive constants $b,c$ such that
\[ \| e^{iu}-1\|_{\mathcal{M}^{w_*}_{p,q}} \leq c \, \|u\|_{\mathcal{M}^{w_*}_{p,q}}\, 
\left\{
\begin{array}{lll}
 e^{\vartheta \, w_* (b\, \|u\|^{1+\frac 1N}_{\mathcal{M}^{w_*}_{p,q}})} &\qquad & \mbox{ if } \|u\|_{\mathcal{M}^{w_*}_{p,q}} > 1, \\
1 &&  \mbox{ if } \|u\|_{\mathcal{M}^{w_*}_{p,q}} \leq 1
\end{array}\right. 
\]
holds for all $u \in \mathcal{M}^{w_*}_{p,q}(\R^n)$.
In addition, the constant $b$ can be chosen independent of $\vartheta$ and $N$.
\end{lma}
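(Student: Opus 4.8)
The plan is to mirror the proof of Lemma \ref{estSuperUltraGenRV} (hence of Lemma 3.6 in \cite{rrs} and the original strategy of \cite{brs}), replacing the regularly varying subalgebra constant $D_R$ by the slowly varying constant $G_{R,N}=C_N R^{-N}$ supplied by Proposition \ref{subalgebraUltraGenSV}. Throughout I use that the functions are real-valued, so $|e^{iu}|=1$; this is what ultimately prevents an amplitude blow-up of the form $e^{c\|u\|}$. The branch $\|u\|\le 1$ is elementary: expanding $e^{iu}-1=\sum_{r\ge 1}(iu)^r/r!$ and applying the algebra property (Theorem \ref{algebraUltraGen}) with its fixed constant $C_{\mathrm{alg}}$ gives $\|e^{iu}-1\|\le C_{\mathrm{alg}}^{-1}(e^{C_{\mathrm{alg}}\|u\|}-1)\lesssim \|u\|$. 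So I concentrate on $\|u\|>1$ and decompose the phase space into the central cube $P_R$ and the $2^n$ corners $P_R(\epsilon)$, writing $u=u_0+\sum_\epsilon u_\epsilon$ with $\widehat{u_0}$ supported in $P_R$ and $\widehat{u_\epsilon}$ in $P_R(\epsilon)$; that such a splitting is bounded on the modulation space uses that the characteristic functions of these sets are $L^p$-multipliers for $1<p<\infty$ (Riesz), which is exactly the source of the restriction on $p$.

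Using $e^{iu}=e^{iu_0}\prod_\epsilon e^{iu_\epsilon}$ and expanding the product, I obtain
\[ e^{iu}-1=(e^{iu_0}-1)+e^{iu_0}\sum_{\emptyset\ne S\subseteq\{0,1\}^n}\ \prod_{\epsilon\in S}(e^{iu_\epsilon}-1). \]
The corner factors are controlled by Proposition \ref{subalgebraUltraGenSV}: since $u_\epsilon\in\mathcal{M}^{w_*}_{p,q}(\epsilon,R)$ has multiplication constant $G_{R,N}=C_N R^{-N}$, the power series yields $\|e^{iu_\epsilon}-1\|\le G_{R,N}^{-1}(e^{G_{R,N}\|u_\epsilon\|}-1)$. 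Choosing $R\asymp\|u\|^{1/N}$ forces $G_{R,N}\|u\|\lesssim 1$, so $\|e^{iu_\epsilon}-1\|\lesssim\|u_\epsilon\|$, and since the corner pieces essentially partition the high frequencies, $\sum_\epsilon\|u_\epsilon\|\lesssim\|u\|$. Products over $|S|\ge 2$ carry extra factors $G_{R,N}$ and are lower order; combining them through Theorem \ref{algebraUltraGen} together with the multiplier bound $\|e^{iu_0}g\|\le(1+C_{\mathrm{alg}}\|e^{iu_0}-1\|)\|g\|$ shows that the whole corner sum contributes at most a polynomial-in-$\|u\|$ multiple of $\|u\|$, times the factor coming from $e^{iu_0}$.

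The decisive term is the central one, $\|e^{iu_0}-1\|$, where the exponential factor is produced, and this is the step I expect to be the main obstacle. Here $u_0$ is band-limited to $P_R$, so its $r$-th power is band-limited to a cube of side $\asymp rR$, and as in \eqref{ws-53} (now with index $\alpha=0$, i.e.\ $w_*=\widetilde w_*$) the relevant frequency weight satisfies
\[ \sum_{|k_i|<rR+1}e^{q\,w_*(|k|)}\ \lesssim\ (rR)^n\,e^{q\,w_*(\sqrt n\,rR)}. \]
The genuinely delicate point is that a naive power-series estimate would generate an amplitude factor $e^{c\|u\|}$, which is far too large for a slowly varying weight; this is circumvented exactly as in \cite{brs}, using that $u_0$ is real-valued so $|e^{iu_0}|=1$ and the oscillatory sum exhibits the cancellation keeping the prefactor at most polynomial in $\|u_0\|$ while concentrating the frequency content at the effective bandwidth $\asymp R\,\|u_0\|_\infty\lesssim R\,\|u\|$. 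With the dominant level $r\asymp\|u\|$ and $R\asymp\|u\|^{1/N}$, this bandwidth is $\asymp\|u\|^{1+1/N}$, so the surviving weight factor is $e^{\,w_*(\sqrt n\,\|u\|^{1+1/N})}$.

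It remains to absorb the auxiliary constants and prefactors into the stated shape, and two facts make this possible at the cost of passing to $\vartheta>1$. First, since $w_*$ is slowly varying, $w_*(d\,t)\le\vartheta^{1/2}w_*(t)$ for any fixed $d>0$ and all large $t$, which absorbs the constant multiplying $\|u\|^{1+1/N}$ inside the argument and lets me take $b$ depending only on $n$, hence independent of $\vartheta$ and $N$. Second, condition \ref{A4} forces $w_*(t)/\log t\to\infty$, so $e^{(\vartheta^{1/2}-1)\,w_*(b\,\|u\|^{1+1/N})}$ grows faster than any power of $\|u\|$ and therefore swallows all the polynomial-in-$\|u\|$ prefactors collected above (the $2^n$ corner combinatorics, the multiplier factor from $e^{iu_0}$, and the subpolynomial growth of the unweighted superposition norm). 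Keeping one explicit factor $\|u\|$ out front, collecting the remaining constants into $c$, and invoking Lemma \ref{einbettung} to pass between the relevant exponents yields the asserted estimate for $\|u\|>1$, which completes the proof.
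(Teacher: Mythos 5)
Your proposal takes essentially the same route as the paper, whose own proof is a one-line deferral to Lemma 4.8 of \cite{rrs} with Proposition \ref{subalgebraUltraGenSV} supplying the subalgebra constant: your reconstruction matches that cited argument in every quantitative choice --- the corner/central splitting via characteristic-function multipliers (the source of $1<p<\infty$), the power series on the corners with $R\asymp\|u\|^{1/N}$ so that $G_{R,N}\|u\|\lesssim 1$, the effective bandwidth $\asymp R\|u\|$ for the central piece producing $e^{\vartheta\,w_*(b\|u\|^{1+1/N})}$, and the absorption of constants via slow variation and \ref{A4}. The one soft spot is that you describe rather than prove the central-piece estimate, and the mechanism there is not ``cancellation in the oscillatory sum'' but rather the pointwise bound $|e^{iu_0}-1|\le\min(2,|u_0|)$ (from $u_0$ real-valued) controlling $\|\Box_k(e^{iu_0}-1)\|_{L^p}$ for $|k|\lesssim R(1+\|u_0\|_{L^\infty})$, combined with factorial decay of the tail of $\sum_r u_0^r/r!$ for larger $|k|$ (only terms $r\gtrsim |k|/R$ contribute to $\Box_k$); since the paper itself defers exactly this step to \cite{rrs}, your proof stands at the same level of detail as the paper's.
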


\begin{proof}
Taking Proposition \ref{subalgebraUltraGenSV} into account we follow the proof of Lemma 4.8 in \cite{rrs}. 
\end{proof}

\begin{rem}
	The restriction to $p\in (1,\infty)$ is explained in the remark of Lemma \ref{estSuperUltraGenRV}. \\
	At this point we are able to compare the results of Lemma \ref{estSuperUltraGenRV} and Lemma \ref{estSuperUltraGenSV}. The methods of the proof of Lemma \ref{estSuperUltraGenSV} also work for modulation spaces $\mathcal{M}^{w_*}_{p,q} (\R^n)$ with $w_* \in \mathcal{W}_1 (\R)$. Thus, if we use weight functions $w_* \in \mathcal{W}_1 (\R)$ in both lemmas we easily see that Lemma \ref{estSuperUltraGenSV} yields a rougher estimate of the quantity $\| e^{i u} - 1 \|_{\mathcal{M}^{w_*}_{p,q}}$ than Lemma \ref{estSuperUltraGenRV}. This seems natural since the class $\mathcal{W}_1 (\R)$ contains only weight functions increasing faster than any function from the class $\mathcal{W}_0 (\R)$, i.e.
	\[ \bigcup_{w_* \in \mathcal{W}_1 (\R)} \mathcal{M}^{w_*}_{p,q} (\R^n) \subset \bigcap_{w_* \in \mathcal{W}_0 (\R) } \mathcal{M}^{w_*}_{p,q} (\R^n). \]
\end{rem}

Now we are in position to state the main theorem of this section. 

\begin{thm} \label{SuperpositionUltraGenSV}
Let $1<p <\infty$, $1\le q \le \infty$,  $\varepsilon >0$ and $\vartheta >1$. Suppose $w_* \in \mathcal{W}_0 (\R)$.
		Let $\mu$ be a complex measure on $\R$ such that
\begin{equation} \label{FourierEstUltraGenSV}
L_3(\lambda) := \int_{\R} e^{\vartheta\, w_*(\lambda |\xi|^{1+\varepsilon}) } \, d|\mu| (\xi) < \infty
\end{equation}
for any $\lambda >0 $ and such that $\mu(\R) = 0$. 
\\
Furthermore, assume that the function $f$ is the inverse Fourier transform of $\mu$. 
Then $f\in C^\infty (\R)$ and the composition operator $T_f: u \mapsto f \circ u$ maps $\mathcal{M}^{w_*}_{p,q} (\R^n)$ into $\mathcal{M}^{w_*}_{p,q} (\R^n)$.
\end{thm}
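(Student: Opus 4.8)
The plan is to reduce the mapping property to the scalar estimate of Lemma~\ref{estSuperUltraGenSV}, in complete analogy with the proof of Theorem~\ref{SuperpositionUltraGenRV}. First I would record the integral representation of the superposition operator. Since $f=\F^{-1}\mu$ and $\mu(\R)=0$, the constant term may be subtracted, so that for every $u\in\mathcal{M}^{w_*}_{p,q}(\R^n)$ one has
\[
	T_f u = f\circ u = \frac{1}{\sqrt{2\pi}} \int_{\R} \big( e^{\i u(x)\xi} - 1\big)\, d\mu(\xi)\,.
\]
The strong integrability \eqref{FourierEstUltraGenSV} forces every polynomial moment of $|\mu|$ to be finite, because by \ref{A4} the weight $e^{\vartheta w_*(\lambda|\xi|^{1+\varepsilon})}$ dominates any power of $|\xi|$; differentiation under the integral sign then gives $f\in C^\infty(\R)$, which settles the smoothness claim.

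Next I would estimate the modulation-space norm by pulling it inside the integral. Using Minkowski's integral inequality together with the continuity of $\xi\mapsto e^{\i u\xi}-1$ from $\R$ into $\mathcal{M}^{w_*}_{p,q}(\R^n)$ (Lemma~\ref{ContExpUltraGen}, which guarantees Bochner integrability), I obtain
\[
	\| T_f u \| \le \frac{1}{\sqrt{2\pi}} \int_{\R} \| e^{\i u\xi} - 1\|\, d|\mu|(\xi)\,.
\]
The integrand is controlled by applying Lemma~\ref{estSuperUltraGenSV} to the function $\xi u$, whose norm is $|\xi|\,\|u\|$. Choosing in that lemma an auxiliary exponent $\vartheta_0\in(1,\vartheta)$ and a natural number $N\ge 1/\varepsilon$ (so that $1+\tfrac1N\le 1+\varepsilon$), the lemma yields, on the region $|\xi|\,\|u\|>1$, the bound $\| e^{\i u\xi}-1\|\le c\,|\xi|\,\|u\|\,e^{\vartheta_0 w_*(b(|\xi|\|u\|)^{1+1/N})}$, while on $|\xi|\,\|u\|\le 1$ the bound is simply $c\,|\xi|\,\|u\|$.

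The decisive step is to match this bound to the hypothesis \eqref{FourierEstUltraGenSV}. Splitting the $\xi$-exponent as $(|\xi|\,\|u\|)^{1+\frac1N}=\|u\|^{1+\frac1N}|\xi|^{1+\frac1N}\le\|u\|^{1+\frac1N}|\xi|^{1+\varepsilon}$ for $|\xi|\ge 1$ and setting $\lambda:=b\,\|u\|^{1+\frac1N}$ (a fixed positive number for fixed $u$), monotonicity of $w_*$ gives $w_*(b(|\xi|\|u\|)^{1+\frac1N})\le w_*(\lambda|\xi|^{1+\varepsilon})$. The prefactor $|\xi|$ is absorbed using \ref{A4}: since $\log|\xi|=o(w_*(\lambda|\xi|^{1+\varepsilon}))$, one has $|\xi|\le C\,e^{(\vartheta-\vartheta_0)w_*(\lambda|\xi|^{1+\varepsilon})}$ for large $|\xi|$, whence $|\xi|\,e^{\vartheta_0 w_*(\cdot)}\le C\,e^{\vartheta w_*(\lambda|\xi|^{1+\varepsilon})}$. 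Collecting the two regions,
\[
	\| T_f u \| \le C\,\|u\| \Big( \int_{|\xi|\le 1/\|u\|} |\xi|\, d|\mu|(\xi) + \int_{\R} e^{\vartheta w_*(\lambda|\xi|^{1+\varepsilon})}\, d|\mu|(\xi) \Big) = C\,\|u\|\,\big( O(1) + L_3(\lambda)\big) < \infty\,,
\]
so $T_f u\in\mathcal{M}^{w_*}_{p,q}(\R^n)$.

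The main obstacle I anticipate is precisely this bookkeeping in the final step. The lemma produces a weight evaluated at the entangled factor $(|\xi|\,\|u\|)^{1+1/N}$, and one must disentangle the $u$-dependent and $\xi$-dependent parts so that only the hypothesis $L_3(\lambda)<\infty$ for the single $\lambda$ depending on $\|u\|$ is invoked, while simultaneously reserving the slack $\vartheta-\vartheta_0>0$ to swallow the polynomial prefactor $|\xi|$ via the super-polynomial growth \ref{A4}. The slowly varying nature of $w_*$ (index $0$) means one gains essentially nothing from enlarging $\lambda$; this is exactly why the exponent $1+\varepsilon$ strictly larger than $1$ is built into \eqref{FourierEstUltraGenSV} and why $N$ must be taken finite with $1+\tfrac1N\le 1+\varepsilon$.
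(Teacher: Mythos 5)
Your proposal is correct and follows essentially the same route as the paper, whose proof simply repeats the argument of Theorem \ref{SuperpositionUltraGenRV} (integral representation of $T_f u$ via $\mu(\R)=0$, Bochner integrability from Lemma \ref{ContExpUltraGen}, Minkowski's inequality, and the scalar bound applied to $\xi u$) with Lemma \ref{estSuperUltraGenSV} and hypothesis \eqref{FourierEstUltraGenSV} replacing their $\mathcal{W}_1(\R)$ counterparts. Your explicit bookkeeping --- choosing $N\ge 1/\varepsilon$, setting $\lambda=b\,\|u\|^{1+1/N}$, and reserving the slack $\vartheta-\vartheta_0$ to absorb the prefactor $|\xi|$ via \ref{A4} --- merely fills in the details the paper delegates to the corresponding proof in \cite{rrs}.
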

\begin{proof}
	The same arguments as in the proof of Theorem \ref{SuperpositionUltraGenRV} together with equation \eqref{FourierEstUltraGenSV} and Lemma \ref{estSuperUltraGenSV} yield the claim. 
\end{proof}

Note the following conclusion.

\begin{cor} \label{SuperpositionCorUltraGenSV}
	Let $1<p <\infty$, $1\le q \le \infty$ and $\varepsilon >0$. Let $w_* \in \mathcal{W}_0 (\R)$. Assume $\mu$ to be a complex measure on $\R$ with the corresponding bounded density function $g$, i.e., $d\mu (\xi) = g(\xi) \, d\xi$. Suppose that
		\begin{equation*} 
			\lim_{|\xi|\to \infty} \frac{w_*(|\xi|^{1+\varepsilon})}{\log |g(\xi)|} = 0
		\end{equation*}
		and $\displaystyle \int_{\R} d\mu (\xi) = \int_{\R} g(\xi) \, d\xi = 0$. 
If the function $f$ is the inverse Fourier transform of $g$, then $f\in C^\infty (\R)$ and the composition operator 
$T_f: u \mapsto f \circ u$ maps $\mathcal{M}^{w_*}_{p,q} (\R^n)$ into $\mathcal{M}^{w_*}_{p,q} (\R^n)$.
\end{cor}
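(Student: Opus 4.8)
The plan is to reduce the pointwise decay hypothesis on the density $g$ to the integral condition \eqref{FourierEstUltraGenSV} of Theorem \ref{SuperpositionUltraGenSV} and then invoke that theorem directly. Since $d\mu(\xi) = g(\xi)\,d\xi$ we have $d|\mu|(\xi) = |g(\xi)|\,d\xi$ and $\mu(\R) = \int_\R g\,d\xi = 0$ is assumed, so the only thing to verify is that, for some fixed $\vartheta > 1$ (say $\vartheta = 2$) and for every $\lambda > 0$,
\[
L_3(\lambda) = \int_\R e^{\vartheta\, w_*(\lambda |\xi|^{1+\varepsilon})}\, |g(\xi)|\, d\xi < \infty .
\]

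First I would extract a quantitative consequence of the limit assumption. Since $w_* \geq 1$ is strictly increasing to infinity (conditions \ref{A5}, \ref{A3}, \ref{A4}), the numerator $w_*(|\xi|^{1+\varepsilon})$ tends to $+\infty$, so $\lim_{|\xi|\to\infty} w_*(|\xi|^{1+\varepsilon})/\log|g(\xi)| = 0$ forces $\log|g(\xi)| \to -\infty$ and, more precisely, $-\log|g(\xi)| \geq \eta^{-1} w_*(|\xi|^{1+\varepsilon})$ eventually for every $\eta > 0$. Choosing $\eta = \tfrac{1}{4\vartheta}$ gives
\[
4\vartheta\, w_*(|\xi|^{1+\varepsilon}) \leq -\log|g(\xi)| \qquad \text{for } |\xi| \geq R_0 .
\]

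Next I would absorb the inner factor $\lambda$. Because $w_*$ is slowly varying (Definition \ref{SlowlyVF}), the Uniform Convergence Theorem gives $w_*(\lambda t)/w_*(t) \to 1$ for each fixed $\lambda > 0$, hence $w_*(\lambda|\xi|^{1+\varepsilon}) \leq 2\, w_*(|\xi|^{1+\varepsilon})$ for $|\xi| \geq R_1(\lambda)$. Combining with the previous bound, for $|\xi| \geq \max(R_0,R_1)$,
\[
\vartheta\, w_*(\lambda|\xi|^{1+\varepsilon}) \leq 2\vartheta\, w_*(|\xi|^{1+\varepsilon}) \leq \tfrac12\,\bigl(-\log|g(\xi)|\bigr),
\]
so that $e^{\vartheta w_*(\lambda|\xi|^{1+\varepsilon})}|g(\xi)| \leq |g(\xi)|^{1/2}$ on the tail. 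To secure integrability I would use condition \ref{A4}: it yields $w_*(t) \geq M\log t$ eventually for every $M$, whence $-\log|g(\xi)| \gtrsim w_*(|\xi|^{1+\varepsilon}) \gtrsim M\log|\xi|$, i.e.\ $g$ decays faster than any polynomial. Thus $|g(\xi)|^{1/2} \leq |\xi|^{-2}$ for large $|\xi|$, which is integrable on $\R$. Splitting $L_3(\lambda)$ into the compact part $\{|\xi|\leq R\}$ (finite, as $g$ is bounded and $w_*$ is locally bounded) and the tail $\{|\xi|>R\}$ (controlled by $\int |g|^{1/2}$) then gives $L_3(\lambda) < \infty$ for every $\lambda > 0$.

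With \eqref{FourierEstUltraGenSV} and $\mu(\R)=0$ established, Theorem \ref{SuperpositionUltraGenSV} applies verbatim and yields both $f = \F^{-1}\mu \in C^\infty(\R)$ and the mapping property of $T_f$ on $\mathcal{M}^{w_*}_{p,q}(\R^n)$. The main obstacle is precisely the passage from the limit hypothesis to a genuinely integrable dominating function that is uniform over $\lambda$: this is where the slowly varying nature of $w_*$ is needed to strip the constant $\lambda$ out of $w_*(\lambda|\xi|^{1+\varepsilon})$, and where assumption \ref{A4} is needed to upgrade the decay of $g$ to the super-polynomial rate that guarantees integrability of the tail. Everything else is a direct citation of Theorem \ref{SuperpositionUltraGenSV}.
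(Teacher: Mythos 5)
Your proof is correct and follows essentially the same route as the paper: the paper's own proof likewise uses the slow variation of $w_*$ to absorb the constant $\lambda$ (via $w_*(\lambda|\xi|)\leq(1+\delta)w_*(|\xi|)$ for large $|\xi|$) and then reduces to verifying the integral condition \eqref{FourierEstUltraGenSV} so that Theorem \ref{SuperpositionUltraGenSV} applies, delegating that verification to the proof of Corollary 3.11 in \cite{rrs}. Your write-up simply makes explicit the tail estimate $e^{\vartheta w_*(\lambda|\xi|^{1+\varepsilon})}|g(\xi)|\leq|g(\xi)|^{1/2}$ and the super-polynomial decay from \ref{A4} that the cited argument contains.
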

\begin{proof}
Since $w_*$ is a slowly varying it follows that for sufficiently large $\xi$ it holds
\[ w_* (\lambda |\xi|) \leq (1+\delta) w_*(|\xi|) \]
for any $\lambda >0$ and any $\delta >0$. By choosing admissible $\lambda$ we can follow the proof of  Corollary 3.11 in \cite{rrs}. 
\end{proof}

\begin{ex} 
Let us devote ourselves again to the general weight function \eqref{WeightEx1} introduced in Section \ref{WeightClass}. If we assume $s=\infty$, then \eqref{WeightEx1} turns to a slowly varying functions. In order to justify then $w_* \in \mathcal{W}_0 (\R)$ we require either $r_1>1$ or $r_1=1$ and additionally the existence of $l\in\N_+$ such that $r_i=0$ for $i=2,3,\ldots, l$, $r_{l+1}>0$ and $r_{l+2}, r_{l+3}, \ldots, r_m \in \R$. Then \ref{A4} is satisfied as well as \ref{A3} and we have $w_* \in \mathcal{W}_0 (\R)$. Note that the other conditions of Definition \ref{DefClassWeight} are anyway satisfied by the representation \eqref{WeightEx1}. Let us mention particular examples. Taking the weight $w_* (|k|) = \log \langle k \rangle_{e^e} \log \log \langle k \rangle_{e^e}$ Theorem \ref{SuperpositionUltraGenSV} yields a non-analytic superposition operator on $\mathcal{M}^{w_*}_{p,q} (\R^n)$. This result coincides with Theorem 4.10 in \cite{rrs}. Moreover, due to Theorem \ref{SuperpositionUltraGenSV} we obtain non-analytic superposition results in the modulation spaces $\mathcal{M}^{w_*}_{p,q} (\R^n)$ if $w_* (|k|) = \log \langle k \rangle_e l_m \langle k \rangle_*$, $m \in \N$, and $w_* (|k|) = \big( \log \langle k \rangle_e \big)^\gamma$, $\gamma >1$, respectively.
\end{ex}

	\section{Open Questions and Concluding Remarks} \label{ConcRem}	
	
	The main concern of this paper has been to explain analytic as well as non-analytic superposition on modulation spaces equipped with a general class of ultradifferentiable weight functions. We found an appropriate class such that we cover a scale of weights increasing faster than any polynomial and at most subexponentially. However, in future work we still would like to get a better understanding of the borderline cases, that is, classical weighted modulation spaces $M^s_{p,q} (\R^n)$ equipped with Sobolev type weights and modulation spaces equipped with exponential type weights. \\
Within the scope of this work we are not able to treat the case of regularly varying weight functions $w_*$ of index $\alpha =1$ although we can assume that $\widetilde{w}_*$ is a decreasing slowly varying function according to representation \eqref{ws-51}. These types of modulation spaces $\mathcal{M}^{w_*}_{p,q} (\R^n)$ correspond to modulation spaces equipped with exponential type weights. However in this case we do not even obtain the algebra property. The proof of Theorem \ref{algebraUltraGen} fails because the conditions \ref{A7} and \ref{A8} cannot be satisfied for regularly varying functions of index $\alpha =1$. Also remark that condition \ref{A1} is not only a technical one. Taking the weight function $w_* (|k|) = \langle k \rangle$, which is regularly varying of index $\alpha = 1$, the authors in \cite{brs} have shown that $\mathcal{M}^{w_*}_{2,2} (\R^n)$ is not an algebra with respect to pointwise multiplication. Nevertheless the question arises if a decreasing slowly varying function in \eqref{ws-51} can provide some benefit in order to obtain multiplication results for modulation spaces $\mathcal{M}^{w_*}_{p,q} (\R^n)$ equipped with exponential type weights. \\
Note that there are several contributions to the algebra problem in weighted modulation spaces $M^s_{p,q} (\R^n)$, see \cite{feichtingerGroup}, \cite{iwabuchi}, \cite{sugimoto}, \cite{guo}, \cite{toftAlg}. \\
	We are able to apply superposition results to handle nonlinear partial differential equations by the concepts used in \cite{brs}. In future work we will study existence of local (in time) and global (in time) solutions of particular semi-linear Cauchy problems. \\
	Let us consider briefly the semi-linear Cauchy problem \eqref{GevreyNL} with initial data taken from $\mathcal{M}^{w_*}_{p,q}(\R^n)$. Assume that $a(t)$ has the modulus of continuity $\mu$, i.e.,
	\[ | a(t) - a(s) | \leq C \mu (|t-s|). \]
	For details we refer to \cite{lorenz}. We expect local (in time) existence results if the time-dependent coefficient $a=a(t)$ has the following moduli of continuity together with initial data $\phi, \psi$ taken from the corresponding modulation spaces $\mathcal{M}^{w_*}_{p,q} (\R^n)$:
	\begin{center}
	\begin{tabular}{|l|l|l|} \hline
		modulus of continuity & commonly called & weight function \\
		\hline
		\rule{0pt}{13pt} 	
 		$\mu (x) = x^\alpha$, $0< \alpha <1$ & H\"older-continuity & $w_* (|k|) = \langle k \rangle^{\frac{1}{s}}$, $s \leq \frac{1}{1-\alpha}$ \\
		\rule{0pt}{13pt} 	
		$\mu (x) = x( \log (\frac{1}{x}) + 1)$ & Log-Lip-continuity & $w_* (|k|) = \log \langle k \rangle_e$ \\
		\rule{0pt}{13pt} 	
 		$\mu (x) = x ( \log (\frac{1}{x}) + 1) l_m (\frac{1}{x}) $ & Log-Log$^{[m]}$-Lip-continuity & $w_* (|k|) = \log \langle k \rangle_e l_m \langle k \rangle_*$, $m \in \N$ \\[5pt]
		\hline
	 \end{tabular}
	 \end{center}
	 Consequently, if we take initial data $\phi, \psi \in \mathcal{M}^{w_*}_{p,q} (\R^n)$ with the wrong weight function $w_*$, i.e., a weight which is not chosen according to the assumed modulus of continuity on the coefficient $a=a(t)$, then we expect that the Cauchy problem \eqref{GevreyNL} becomes ill-posed. 
	 
\section*{Acknowledgement}
The author gratefully acknowledge the helpful advices and support of Prof. Sickel from Friedrich-Schiller University Jena. Moreover, the author wants to express his gratitude to Prof. Reissig from TU Bergakademie Freiberg for numerous constructive discussions.

	\newpage

\end{document}